\theoremstyle{plain}
\newtheorem{theorem}{\bf Theorem}[section]
\newtheorem{proposition}[theorem]{\bf Proposition}
\newtheorem{lemma}[theorem]{\bf Lemma}
\newtheorem{corollary}[theorem]{\bf Corollary}
\newtheorem{conjecture}[theorem]{\bf Conjecture}
\theoremstyle{definition}
\newtheorem{example}[theorem]{\bf Example}
\newtheorem{remark}[theorem]{\bf Remark}
\newcommand{\N}{\mathbb N}
\newcommand{\Z}{\mathbb Z}
\newcommand{\R}{\mathbb R}
\newcommand{\Q}{\mathbb Q}
\newcommand{\LK}{\,[\![}
\newcommand{\RK}{]\!]}
 \DeclareMathOperator{\Int}{Int}
\DeclareMathOperator{\spec}{spec} \DeclareMathOperator{\supp}{supp}
\DeclareMathOperator{\Pic}{Pic} 
\DeclareMathOperator{\inn}{\mathsf{in}} \DeclareMathOperator{\mdeg}{\mathsf {m-deg}}
 \DeclareMathOperator{\Span}{Span}
\DeclareMathOperator{\Rad}{Rad}
\newcommand{\red}{{\text{\rm red}}}
\newcommand{\BF}{\text{\rm BF}}
\newcommand{\FF}{\text{\rm FF}}
\newcommand{\fin}{\text{\rm fin}}
\newcommand{\DP}{\negthinspace : \negthinspace}
\numberwithin{equation}{section}
\begin{document}

\title{On the arithmetic of monoids of ideals}

\author{Alfred Geroldinger and M. Azeem Khadam}

\address{University of Graz, NAWI Graz \\
Institute of Mathematics and Scientific Computing \\
Heinrichstra{\ss}e 36\\
8010 Graz, Austria}
\email{alfred.geroldinger@uni-graz.at,  azeem.khadam@uni-graz.at}
\urladdr{https://imsc.uni-graz.at/geroldinger, https://sites.google.com/view/azeemkhadam}

\thanks{This work was supported by the Austrian Science Fund FWF, Project P33499-N}

\keywords{monoids of ideals, polynomial ideal theory, Krull domains, weakly Krull domains,  sets of lengths, elasticity}

\subjclass[2010]{13A15, 13B25, 13F05, 13F20, 20M12, 20M13}

\begin{abstract}
We  study the algebraic and arithmetic structure of monoids of invertible ideals (more precisely, of $r$-invertible $r$-ideals for certain ideal systems $r$) of Krull and weakly Krull Mori domains. We also investigate monoids of all nonzero ideals of polynomial rings with at least two indeterminates over noetherian domains. Among others, we show that they are not transfer Krull but they share several arithmetic phenomena with Krull monoids having infinite class group and prime divisors in all classes.
\end{abstract}

\maketitle


\section{Introduction} \label{1}

Let $R$ be a (commutative integral) domain, $r$ be an ideal system on $R$, $\mathcal I_r (R)$ be the semigroup of nonzero $r$-ideals with $r$-multiplication, and $\mathcal I_r^* (R) \subset \mathcal I_r (R)$ be the subsemigroup of $r$-invertible $r$-ideals. As usual, we denote by $v$ the system of divisorial ideals and for the $d$-system of usual ring ideals we omit all suffices (i.e., $\mathcal I (R) = \mathcal I_d (R)$, and so on). Factoring ideals into finite products of special ideals (such as prime ideals, radical ideals, and more) is a central topic of multiplicative ideal theory. The monograph \cite{Fo-Ho-Lu13a} of Fontana, Houston, and Lucas shows the rich variations of this theme. Algebraic and arithmetic properties of ideal semigroups help to understand the multiplicative structure of the underlying domain. To mention some classical results, $R$ is a Dedekind domain if and only if $\mathcal I (R) = \mathcal I^* (R)$ if and only if $\mathcal I (R)$ is a factorial monoid, and $R$ is a Krull domain if and only if $\mathcal I_v^* (R)$ is  a factorial monoid. Recent progress in such directions can be found in the work by Anderson, Chang, Juett, Kim, Klingler,  Olberding, Reinhart, and others  (e.g., \cite{Ch-Ki11a, Ju12a, Re12a, HK-Ol-Re16a, An-Ju-Mo19a, Ju-Mo-Nd21, Ol-Re19a, Ol-Re20a, Kl-Om20a}).

In the present paper, we first study factorizations of  $r$-invertible $r$-ideals into multiplicatively irreducible $r$-ideals of weakly Krull Mori domains and, in particular,  of Krull domains. Clearly, monoids $\mathcal I_r^* (R)$ of $r$-invertible $r$-ideals are commutative cancellative monoids.  After some preparations in the setting of abstract monoids in Section \ref{3},  we show in Section \ref{4} that the monoid of invertible ideals of a weakly Krull Mori domain is a weakly Krull Mori monoid again and that a domain $R$ is Krull if and only if the monoid $\mathcal I_r^* (R)$ (with $r$-multiplication) is a Krull monoid (Theorems \ref{4.3} and \ref{4.5}).   Much is known about the arithmetic of weakly Krull monoids and, in particular, of Krull monoids. The arithmetic of the latter is uniquely determined by its class group and the distribution of prime divisors in the classes. We focus on two arithmetical properties, namely on the structure of unions of sets of lengths and on being fully elastic (definitions are recalled at the beginning of Section \ref{3}). Our arithmetic results on the monoids of invertible ideals (as given in Corollary \ref{4.4} and Proposition \ref{4.9}) are based on our understanding of their algebraic structure.

In Section \ref{5}, we study the semigroup of all nonzero ideals. In general, these semigroups are not cancellative, and for this reason only first steps have been made towards the understanding of their arithmetic. However, under natural ideal-theoretic assumptions they are unit-cancellative and even BF-monoids (Propositions \ref{2.1} and \ref{2.2}). In the last years, parts of the existing machinery of factorization theory, developed in the setting of cancellative monoids, was generalized to the setting of unit-cancellative monoids (for a first paper, see \cite{F-G-K-T17}). In Section \ref{2}, we introduce all the required arithmetical concepts in the setting of unit-cancellative monoids. The monoid of all nonzero ideals was studied for  orders in Dedekind domains with finite class group \cite{Br-Ge-Re20, Ge-Re19d, Ba-Ge-Re21c}. In this setting, monoids of all nonzero ideals share  arithmetical finiteness properties with  monoids of invertible ideals and, more generally, with Krull monoids having finite class group. Our main result  in Section \ref{5} deals with the monoid of nonzero ideals of polynomial rings $R$ with at least two variables over noetherian domains, and they  show a completely different behaviour. Theorem \ref{5.1} shows that $\mathcal I (R)$ is not transfer Krull and that factorizations in $\mathcal I (R)$ are as wild as possible. Indeed,  they share arithmetical phenomena with Krull monoids having infinite class group and prime divisors in all classes (see Theorem \ref{5.1},  Conjecture \ref{5.12}, Example \ref{5.13} and the preceding discussion). The methods, used in the proof of Theorem \ref{5.1}, stem from the theory of Gr\"obner bases in polynomial ideal theory.

\smallskip
\section{Background on the ideal theory and the arithmetic of monoids} \label{2}
\smallskip

We denote by $\N$ the set of positive integers and we set $\N_0 = \N \cup \{0\}$. For real numbers $a, b \in \R$, we let $[a, b ] = \{x \in \Z \colon a \le x \le b \}$ denote the discrete interval between $a$ and $b$. Let $A$ and  $B$ be sets. We use the symbol $A \subset B$ to mean that $A$ is contained in $B$ but may be equal to $B$.  Suppose that $A$ and $B$ are  subsets of $\Z$. Then $A+B = \{a+b \colon a \in A, b \in B \}$ denotes their sumset and the set of distances $\Delta (A) \subset \N$ is the set of all $d \in \N$ for which there is $a \in A$ such that $A \cap [a, a+d] = \{a, a+d\}$. If $A \subset \N$, then $\rho (A) = \sup A / \min A \in \Q_{\ge 1} \cup \{\infty\}$ denotes the elasticity of $A$, and we set $\rho ( \{0\})= 1$.

Let $H$ be a multiplicatively written commutative semigroup with identity element. We denote by $H^{\times}$ the group of invertible elements of $H$, and we say that $H$ is reduced if $H^{\times} = \{1\}$. An element $a \in H$ is said to be
\begin{itemize}
\item {\it cancellative} if $b, c \in H$ and $ab = ac$ implies that $b=c$, and

\item {\it unit-cancellative} if $a \in H$ and $a = au$ implies that $u \in H^{\times}$.
\end{itemize}
By definition, every cancellative element is unit-cancellative. The semigroup $H$ is said to be {\it cancellative} (resp. {\it unit-cancellative}) if every element $a \in H$ is cancellative (resp. unit-cancellative).

\medskip
\centerline{\it Throughout this paper, a monoid means a}
\centerline{\it commutative unit-cancellative semigroup with identity element.}
\medskip

For a set $P$, we denote by $\mathcal F (P)$ the free abelian monoid with basis $P$. Elements $a\in\mathcal F(P)$ are written in the form
\[
a=\prod_{p\in P} p^{\mathsf v_p(a)}\,,\quad\text{where $\mathsf v_p\colon\mathcal F(P)\to\N_0$ }
\]
is the $p$-adic valuation. We denote by $|a|=\sum_{p\in P}\mathsf v_p(a)\in\N_0$ the length of $a$ and by $\supp(a)=\{ p\in P\colon \mathsf v_p(a) > 0\}\subset P$ the support of $a$. Let $H$ be a monoid. A monoid  $H$ is cancellative if and only if it has a quotient group, which will be denoted by $\mathsf q (H)$. Let $H$ be a cancellative monoid. We denote by
\begin{itemize}
\item $H' = \{ x \in \mathsf q (H) \colon \ \text{there is $N \in \N$ such that $x^n \in H$ for all $n \ge N$} \} \subset \mathsf q (H)$ the {\it seminormalization} of $H$, and by

\item $\widehat H = \{ x \in \mathsf q (H) \colon \ \text{there is $c \in H$ such that $cx^n \in H$ for all $n \in \N$} \} \subset \mathsf q (H)$   the {\it complete integral closure} of $H$.
\end{itemize}
Then $H \subset H' \subset \widehat H \subset \mathsf q (H)$, and $H$ is called
\begin{itemize}
\item {\it seminormal} if $H = H'$  (equivalently, if $x \in \mathsf q (H)$ and $x^2, x^3 \in H$, then $x \in H$), and

\item {\it completely integrally closed} if $H = \widehat H$.
\end{itemize}
A submonoid $S \subset H$ is called {\it divisor-closed} if $a \in S$ and $b \in H$ with $b \mid a $ implies that $b \in S$. For a subset $S \subset H$, we denote by $\LK S \RK$ the smallest divisor-closed submonoid generated by $S$. Let $\varphi \colon H \to D$ be a monoid homomorphism to a cancellative monoid $D$. We set $H_{\varphi} = \{a^{-1}b \colon a, b \in H, \varphi (a) \mid_D \varphi (b) \}$ and we say that $\varphi$ is a {\it divisor homomorphism} if $a, b \in H$ and $\varphi (a) \mid \varphi (b)$ in $D$ implies that $a \mid b $ in $H$ (equivalently, $H_{\varphi} = H$).

\smallskip
\noindent
{\bf Ideal Theory of Monoids.} Our notation of ideal theory follows \cite{HK98}, but note that the monoids in this paper do not contain a zero element. An {\it ideal system} on a cancellative monoid $H$ is a map $r \colon \mathcal P (H) \to \mathcal P (H)$, where $\mathcal P (H)$ is the power set of $H$,  such that the following conditions are satisfied for all subsets $X, Y \subset H$ and all elements $a \in H$:
\begin{itemize}
\item $X \subset X_r$,

\item $X \subset Y_r$ implies that $X_r \subset Y_r$,

\item $aH \subset \{a\}_r$, and

\item $aX_r = (aX)_r$.
\end{itemize}
We say that $r$ is {\it finitary} if, for all $X \subset H$,  $X_r$ is the union of all $E_r$ over all finite subsets $E \subset X$.
As usual, the $v$-system denotes the system of divisorial ideals. The monoid $H$ is said to be
\begin{itemize}
\item a {\it Mori monoid} if it is cancellative and satisfies the ACC on divisorial ideals, and

\item a {\it Krull monoid} if it is a completely integrally closed Mori monoid.
\end{itemize}
Let $r$ be any ideal system on $H$. A subset $I \subset H$ is called an $r$-ideal if $I_r=I$, and $\mathcal I_r (H)$ is the set of nonempty $r$-ideals. Then $\mathcal I_r (H)$ together with $r$-multiplication (defined by $I \cdot_r J = (IJ)_r$ for all $I, J \in \mathcal I_r (H)$) is a reduced semigroup with identity element $H$. Let $\mathcal F_r (H)$ denote the semigroup of fractional $r$-ideals and $\mathcal F_r (H)^{\times}$ the group of $r$-invertible fractional $r$-ideals. Then $\mathcal I_r^* (H) = \mathcal F_r (H)^{\times} \cap \mathcal I_r (H)$ is the cancellative monoid of $r$-invertible $r$-ideals with $r$-multiplication and $\mathcal I_r^* (H) \subset \mathcal I_r (H)$ is a divisor-closed submonoid. If $q$ is a further ideal system on $H$ with $\mathcal I_q (H) \subset \mathcal I_r (H)$, then, by \cite[Theorem 12.1]{HK98},
\begin{equation} \label{inclusion1}
\mathcal F_r (H)^{\times} \subset \mathcal F_q (H)^{\times} \subset \mathcal F_v (H)^{\times} \quad \text{are subgroups and} \quad \mathcal I_r^* (H) \subset \mathcal I_q^{*} (H) \subset \mathcal I_v^{*} (H) \ \text{are submonoids} \,.
\end{equation}
The cokernel of the group homomorphism $\mathsf q (H) \to \mathcal F_r (H)^{\times}$, $a \mapsto aH$, is called the $r$-class group of $H$. It will be denoted by $\mathcal C_r (H)$ and written additively. Thus, if $I, J \in \mathcal F_r (H)^{\times}$, then $[I \cdot_r J] = [I] + [J] \in \mathcal C_r (H)$. We denote by $\mathfrak X (H)$ the set of all minimal nonempty prime $s$-ideals of $H$ and note that $\mathfrak X (H) \subset t$-$\spec (H)$.
We say that $H$ satisfies the $r${\it -Krull Intersection Theorem} if
\[
\bigcap_{n \ge 0} (I^n)_r = \emptyset \quad \text{for all} \ I \in \mathcal I_r (H) \setminus \{H\} \,.
\]

\smallskip
\noindent
{\bf Arithmetic of Monoids.}  Let $H$ be a monoid. An element $p \in H$ is said to be
\begin{itemize}
\item {\it irreducible} (an {\it atom}) if $p \notin H^{\times}$ and $p = ab$ with $a, b \in H$ implies that $a \in H^{\times}$ or $b \in H^{\times}$,

\item {\it primary} if  $p \notin H^{\times}$ and $p \mid ab$ with $a, b \in H$ implies that $p \mid a$ or $p \mid b^n$ for some $n \in \N$, and

\item {\it prime} if $p \notin H^{\times}$ and $p \mid ab$ with $a, b \in H$ implies that $p \mid a$ or $p \mid b$.
\end{itemize}
We denote by $\mathcal A (H)$ the set of atoms of $H$.
The free abelian monoid $\mathsf Z (H) = \mathcal F ( \mathcal A (H_{\red}))$ is the {\it factorization monoid} of $H$ and  $\pi \colon \mathsf Z (H) \to H_{\red}$, defined by $\pi (u) = u$ for all $u \in \mathcal A (H_{\red})$, denotes the {\it factorization homomorphism} of $H$. For $a \in H$,
\begin{itemize}
\item $\mathsf Z_H (a) = \mathsf Z (a) = \pi^{-1} (aH^{\times}) \subset \mathsf Z (H)$ is the {\it set of factorizations} of $a$,

\item $\mathsf L_H (a) = \mathsf L (a) = \{ |z| \colon z \in \mathsf Z (a) \} \subset \N_0$ is the {\it set of lengths} of $a$, and

\item $\mathcal L (H) = \{ \mathsf L (a) \colon a \in H \}$ is the {\it system of sets of lengths} of $H$.
\end{itemize}
If $S \subset H$ is a divisor-closed submonoid and $a \in S$, then $\mathsf Z (S) \subset \mathsf Z (H)$, $\mathsf Z_S (a) = \mathsf Z_H (a)$, and $\mathsf L_S (a) = \mathsf L_H (a)$. We say that $H$ is
\begin{itemize}
\item {\it atomic} if $\mathsf L (a)$ is nonempty for all $a \in H$,

\item {\it half-factorial} if $|\mathsf L (a)| = 1$ for all $a \in H$,

\item a \BF-{\it monoid}  if $\mathsf L (a)$ is finite and nonempty for all $a \in H$,

\item an \FF-{\it monoid} if $\mathsf Z (a)$ is finite and nonempty for all $a \in H$, and

\item {\it locally finitely generated} if $\LK a \RK_{\red} \subset H_{\red}$ is finitely generated for all $a \in H$.
\end{itemize}
Every Mori monoid $H$ is a BF-monoid and if, in addition, $r$-$\max(H)=\mathfrak{X}(H)$, then $H$ is of finite $r$-character (\cite[Theorems 2.2.5.1 and 2.2.9]{Ge-HK06a}).
Krull monoids are locally finitely generated and locally finitely generated monoids are FF-monoids (\cite[Proposition 2.7.8]{Ge-HK06a}).

The next lemma gathers  the properties of ideal semigroups needed in the sequel.

\smallskip
\begin{proposition} \label{2.1}
Let $H$ be a cancellative monoid and $r$ be an ideal system on $H$.
\begin{enumerate}
\item If $H$ is a Mori monoid, then
      \[
      \bigcap_{n \ge 0} (I^n)_r = \emptyset \quad \text{for all} \ I \in \mathcal I_r^* (H) \setminus \{H\} \,.
      \]

\item If $H$ is $r$-noetherian, then $(\mathcal I_r^* (H), \cdot_r)$ is a Mori monoid.

\item If $(\mathcal I_r^* (H), \cdot_r)$ is a Mori monoid, then $H$ is a Mori monoid.

\item If $r$ is finitary and $H$ satisfies the $r$-Krull Intersection Theorem, then $\mathcal I_r (H)$ is unit-cancellative and if, in addition, $H$ has finite $r$-character, then $\mathcal I_r (H)$ is a \BF-monoid.
\end{enumerate}
\end{proposition}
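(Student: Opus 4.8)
I would establish the four assertions separately; each one reduces to an appropriate Krull‑intersection argument, with the real work concentrated in (2) and (3).

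\textbf{For (1).} The plan is a standard ascending‑chain argument. Assume towards a contradiction that $x \in \bigcap_{n\ge 0}(I^n)_r$ for some $I \in \mathcal I_r^*(H)\setminus\{H\}$. Since $(I^n)_r$ is $r$‑invertible with inverse $\bigl((I^{-1})^n\bigr)_r$ and $\{x\}_r = xH \subseteq (I^n)_r$, the fractional ideals $\mathfrak a_n := \bigl(x(I^{-1})^n\bigr)_r = (xH)\cdot_r\bigl((I^{-1})^n\bigr)_r$ are integral $r$‑invertible — hence, by \eqref{inclusion1}, divisorial — $r$‑ideals of $H$, and from $I\subseteq H$ (i.e. $H\subseteq I^{-1}$) the chain $\mathfrak a_0\subseteq\mathfrak a_1\subseteq\cdots$ is ascending. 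The Mori property of $H$ forces $\mathfrak a_N=\mathfrak a_{N+1}$ for some $N$; multiplying this equality by $(I^{N+1})_r$ with respect to $\cdot_r$ and simplifying with the identities $aX_r=(aX)_r$, $(XY)_r=(X_rY)_r$ and $\bigl((I^{-1})^NI^N\bigr)_r=H$ yields $xI=xH$, hence $I=H$, a contradiction.

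\textbf{For (2) and (3).} The common core is a description of the divisorial ideals of $D:=\mathcal I_r^*(H)$ in terms of $r$‑ideals of $H$. Recall $\mathsf q(D)=\mathcal F_r(H)^\times$, and for a nonempty integral divisorial ideal $\mathfrak a$ of $D$ one has $\mathfrak a=\bigcap\{JD : J\in\mathcal F_r(H)^\times,\ \mathfrak a\subseteq JD\}$. I would first verify that, for $J\in\mathcal F_r(H)^\times$, the relation $\mathfrak a\subseteq JD$ is equivalent to $J\supseteq\mathfrak b$, where $\mathfrak b:=\bigl(\bigcup_{I\in\mathfrak a}I\bigr)_r\in\mathcal I_r(H)$ (this uses only that $(Y)_r\subseteq H$ iff $Y\subseteq H$), so that
\[
\mathfrak a \;=\; \bigl\{\, I\in D \;:\; I\subseteq J \ \text{for every}\ J\in\mathcal F_r(H)^\times\ \text{with}\ \mathfrak b\subseteq J \,\bigr\},
\]
i.e. $\mathfrak a$ is determined by, and depends monotonically on, the $r$‑ideal $\mathfrak b$ of $H$. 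For (2): an ascending chain of divisorial ideals of $D$ gives an ascending chain of the associated $\mathfrak b_i\in\mathcal I_r(H)$, which stabilizes by $r$‑noetherianity, so the original chain stabilizes; together with cancellativity of $\mathcal I_r^*(H)$ this makes $D$ a Mori monoid. For (3): dually, to an ascending chain of integral $v$‑ideals $\mathfrak a_i$ of $H$ I would attach the divisorial ideals $\mathfrak A_i:=\bigcap\{aH\cdot_r D : a\in\mathsf q(H),\ \mathfrak a_i\subseteq aH\}$ of $D$ and check that $xH\in\mathfrak A_i\Leftrightarrow x\in\mathfrak a_i$ for $x\in H$; since $D$ is Mori the chain $(\mathfrak A_i)$ stabilizes, hence so does $(\mathfrak a_i)$, giving the ascending chain condition on (integral) divisorial ideals of $H$, i.e. $H$ is Mori.

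\textbf{For (4).} For unit‑cancellativity, suppose $(IJ)_r=I$ with $J\ne H$; using $(XY)_r=(X_rY)_r$ one gets $(IJ^n)_r=I$ for all $n\ge 1$ by induction, so $I\subseteq(J^n)_r$ for all $n$, contradicting $\bigcap_n(J^n)_r=\emptyset$ (the $r$‑Krull Intersection Theorem applied to $J$), since $I$ is a nonempty $r$‑ideal. Hence $\mathcal I_r(H)$ is unit‑cancellative. For the \BF-property I would argue as follows: finitariness of $r$ guarantees (Zorn) that each proper $r$‑ideal lies in an $r$‑maximal ideal, and finite $r$‑character means a fixed $I\ne H$ lies in only finitely many, say $\mathfrak m_1,\dots,\mathfrak m_k$. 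If $I=I_1\cdot_r\cdots\cdot_r I_n$ with all $I_j\ne H$, then each $I_j\supseteq I$, so each $I_j$ lies in one of the $\mathfrak m_i$; grouping the factors by that index gives $I\subseteq(\mathfrak m_i^{\,n_i})_r$ with $\sum_i n_i=n$. Fixing any $x\in I$ and using $\bigcap_m(\mathfrak m_i^{\,m})_r=\emptyset$, there is $N_i$ with $x\notin(\mathfrak m_i^{\,N_i})_r$, forcing $n_i<N_i$; hence $n<N_1+\cdots+N_k$, a bound depending only on $I$. This single bound both makes iterated splittings into non‑units terminate (so $\mathcal I_r(H)$ is atomic) and bounds all factorization lengths of $I$, so $\mathsf L(I)$ is finite and nonempty and $\mathcal I_r(H)$ is a \BF-monoid.

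The step I expect to be the main obstacle is the verification, for (2) and (3), of the displayed description of the divisorial ideals of $\mathcal I_r^*(H)$: one must keep careful track of the passage between fractional and integral $r$‑ideals and of the ideal‑system identities. Once that correspondence is in place, (2) and (3) follow mechanically, (1) is a special case of the same circle of ideas, and (4) is self‑contained given the $r$‑Krull Intersection Theorem.
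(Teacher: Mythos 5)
Your proposal is correct, but it is worth noting that the paper does not prove any of the four assertions itself: it disposes of (1) by citing \cite[Theorem 12.5]{HK98}, of (2) and (3) by citing \cite[Example 2.1]{Ge-Ha08a}, and of (4) by citing \cite[Lemma 4.1]{Ba-Ge-Re21c} and \cite[Section 4]{Ge-Re19d}. What you have written is essentially a reconstruction of the arguments behind those citations, and the reconstruction checks out. For (1), the ascending chain $\mathfrak a_n=(x(I^{-1})^n)_r$ consists of integral $r$-invertible ideals, which are divisorial by \eqref{inclusion1} (this is the one point you should make explicit, since the Mori hypothesis is ACC on \emph{divisorial} ideals); stabilization plus $((I^{-1})^NI^N)_r=H$ then gives $xI=xH$ and cancellativity of $x$ in $H$ finishes it. For (2) and (3), your correspondence $\mathfrak a\leftrightarrow\mathfrak b=(\bigcup_{I\in\mathfrak a}I)_r$ between integral divisorial ideals of $\mathcal I_r^*(H)$ and certain integral $r$-ideals of $H$ is precisely the content of the cited Example 2.1 of Geroldinger--Hassler; the key verification $I\in JD\Leftrightarrow I\subseteq J$ goes through because $J$ is $r$-invertible ($I\subseteq J$ gives $J^{-1}I\subseteq(J^{-1}J)_r=H$, and conversely $I\subseteq(JJ^{-1}I)_r\subseteq(JH)_r=J$), and you correctly identify this as the only step requiring care. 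For (4), the induction $(IJ^{n})_r=I$ via $(X_rY)_r=(XY)_r$ and the bound $n=\sum_i n_i<\sum_i N_i$ obtained by distributing the factors among the finitely many $r$-maximal ideals containing $I$ is exactly the mechanism of \cite[Lemma 4.1]{Ba-Ge-Re21c}; the Zorn argument for the existence of $r$-maximal ideals is where finitariness of $r$ enters, as you say. In short: same mathematics, but self-contained where the paper delegates to the literature.
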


\begin{proof}
1. See \cite[Theorem 12.5]{HK98}.

2. and 3. follow from  \cite[Example 2.1]{Ge-Ha08a}.

4. See \cite[Lemma 4.1]{Ba-Ge-Re21c} and \cite[Section 4]{Ge-Re19d}.
\end{proof}

\smallskip
\noindent
{\bf Rings.} By a ring, we mean a commutative ring with identity element and by a domain, we mean a commutative integral domain with identity element. Let $R$ be a  ring. Then its multiplicative semigroup $R^{\bullet}$ of regular  elements is a cancellative monoid.  All arithmetic concepts introduced for monoids will be used for the monoids of regular elements of rings. Thus, we say that $R$ is atomic (factorial, and so on) if $R^{\bullet}$ has the respective property and we set $\rho (R) := \rho (R^{\bullet})$ and similarly for all arithmetical invariants. 
If $R$ is a $v$-Marot ring, then $R$ is a Mori ring resp. a Krull ring if and only if $R^{\bullet}$ is a Mori monoid resp. a Krull monoid (\cite[Theorem 3.5]{Ge-Ra-Re15c}).

Let $R$ be a domain. We denote by $\mathcal H (R)$ the monoid of nonzero principal ideals of $R$, and note that $\mathcal H (R) \cong R^{\bullet}_{\red}$.
Let $r$ be an ideal system on $R$. Then   $r$ restricts to an ideal system $r'$ on $R^{\bullet}$, whence for every subset $I \subset R$, we have $I_r = (I^{\bullet})_{r'} \cup \{0\}$. We use all ideal theoretic concepts introduced for monoids for domains.  In particular,  $\mathcal I_r (R)$ is the semigroup of nonzero $r$-ideals of $R$, $\mathcal I_r^* (R)$ is the subsemigroup of $r$-invertible $r$-ideals of $R$, and $\mathcal C_r (R) = \mathcal F_r (R)^{\times}/ \mathsf q ( \mathcal H (R))$ is the $r$-class group of $R$.

The usual ring ideals form a finitary ideal system (the $d$-system), and for these ideals we omit all suffices, whence $\mathcal I (R) = \mathcal I_d (R)$, and the $d$-class group $\mathcal C_d (R) = \mathcal F (R)^{\times}/\mathsf q (\mathcal H (R))$ is the Picard group $\Pic (R)$ of $R$.
Throughout this paper,  we suppose that $\mathcal I_r (R) \subset \mathcal I (R)$, whence
\begin{equation} \label{inclusion2}
\mathcal I^* (R) \subset \mathcal I_r^* (R) \subset \mathcal I_v^* (R) \,.
\end{equation}
If $R$ satisfies the $r$-Krull Intersection Theorem, then $\mathcal I_r (R)$ is unit-cancellative by Proposition \ref{2.1}.4.  If $R$ is a Mori domain,  then $(\mathcal I_v^* (R), \cdot_v)$ is a    Mori monoid by Proposition \ref{2.1}.2. If $R$ is a one-dimensional Mori domain, then $\mathcal I_v^* (R) = \mathcal I^* (R)$, $R$ has finite character by \cite[Lemma 3.11]{Ga-Ro19a}, $\mathcal I (R)$ is unit-cancellative by \cite[Corollary 4.4]{Ge-Re19d}, whence $\mathcal I (R)$ is a \BF-monoid by Proposition \ref{2.1}.4.
For $I \in \mathcal I (R)$, we define $\omega' (I) \in \N_0 \cup \{\infty\}$ to be the smallest $N$ having the following property:
\begin{itemize}
\item[] If $n \in \N$ and $J_1, \ldots, J_n \in \mathcal I (R)$ with $J_1 \cdot \ldots \cdot J_n \subset I$, then there exists a subset $\Omega \subset [1,n]$ such that $|\Omega| \le N$ and $\prod_{\lambda \in \Omega} J_{\lambda} \subset I$.
\end{itemize}
The invariant $\omega' (I)$ is closely related to a well-studied invariant $\omega (\cdot)$ where, in the definition, containment is replaced by divisibility (see, for example, \cite{F-G-K-T17, Ge-Ha08a}). Thus, for invertible ideals $I \in \mathcal I^* (R)$, we have $\omega (I) = \omega' (I)$.

Suppose that  $R$ is noetherian. Then the integral closure $\overline R$ is a Krull domain by the Theorem of Mori-Nagata. Since noetherian domains are Mori, they are BF-domains but they need not be FF-domains. An algebraic characterization of when noetherian domains are locally finitely generated is given in \cite[Theorem 1]{Ka20a}. The next proposition shows that $\mathcal I (R)$ is a \BF-monoid.

\smallskip
\begin{proposition} \label{2.2}
	Let $R$ be a noetherian domain and $I \in \mathcal I (R)$.
	\begin{enumerate}
		\item If $J \in \mathcal I (R)$ with $I \subset J$ and $J/I \cong R/P$ (as $R$-modules) for some prime ideal $P \subset R$, then $\omega' (I) \le \omega' (J)+1$.
		
		\item $\omega' (I) < \infty$.
		
		\item $\sup \mathsf L_{\mathcal I (R)} (I) \le \omega' (I)$. In particular, $\mathcal I (R)$ is a \BF-monoid.
	\end{enumerate}
\end{proposition}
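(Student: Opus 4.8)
The plan is to prove the three parts in the order given, since part (1) is the inductive engine, part (2) follows from part (1) by a Noetherian induction, and part (3) is a clean consequence of part (2).

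For part (1), the point is to compare the containment combinatorics of $I$ with those of $J$. Suppose $J_1 \cdots J_n \subset I$; since $I \subset J$, a fortiori $J_1 \cdots J_n \subset J$, so by definition of $\omega'(J)$ there is a subset $\Omega_0 \subset [1,n]$ with $|\Omega_0| \le \omega'(J)$ and $\prod_{\lambda \in \Omega_0} J_\lambda \subset J$. Write $A = \prod_{\lambda \in \Omega_0} J_\lambda$, so $A \subset J$ but we want a slightly larger product landing inside $I$. The module isomorphism $J/I \cong R/P$ says $J/I$ is a simple $R$-module annihilated by $P$, so $PJ \subset I$ and moreover for any ideal $B \subset R$, either $BJ \subset I$ or $B + P = R$ (because $B$ acts on the simple module $J/I$ either as zero or surjectively). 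I would apply this with $B = J_\mu$ ranging over $\mu \in [1,n] \setminus \Omega_0$: if some $J_\mu \subset $ ... more precisely, if $J_\mu J \subset I$ for some such $\mu$, then $\Omega_0 \cup \{\mu\}$ works and has size $\le \omega'(J)+1$; otherwise $J_\mu + P = R$ for every $\mu \notin \Omega_0$, and then (using that the $J_\mu$ for $\mu \notin \Omega_0$ together with $A$ multiply into... ) I need to extract a product inside $I$. Here is the subtlety I expect to be the main obstacle: from $A \subset J$ alone one only gets $AJ_\mu \subset J$, not $\subset I$, so a single extra factor need not suffice unless that factor kills $J/I$. The resolution: since the full product $J_1 \cdots J_n \subset I$, consider the partial product $A \cdot \prod_{\mu \notin \Omega_0} J_\mu = J_1 \cdots J_n \subset I$; if every $J_\mu$ with $\mu \notin \Omega_0$ satisfies $J_\mu + P = R$, then $\prod_{\mu \notin \Omega_0} J_\mu + P = R$, so $1 = x + y$ with $x \in \prod_{\mu \notin \Omega_0} J_\mu$ and $y \in P$; then for $a \in A \subset J$ we have $a = ax + ay$, with $ax \in A\prod_{\mu \notin\Omega_0}J_\mu \subset I$ and $ay \in AP \subset JP \subset I$, hence $a \in I$, i.e. $A \subset I$ already and $\Omega_0$ itself works with $|\Omega_0| \le \omega'(J) \le \omega'(J)+1$. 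In the remaining case some $J_\mu J \subset I$, and $\Omega_0 \cup \{\mu\}$ works. That gives $\omega'(I) \le \omega'(J)+1$.

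For part (2), I would argue by Noetherian induction on $I \in \mathcal I(R)$, i.e. assume $\omega'(J) < \infty$ for all nonzero ideals $J \supsetneq I$. Since $R/I$ is a nonzero Noetherian module, it has a finite filtration with quotients of the form $R/P_i$; equivalently there is an ideal $J$ with $I \subsetneq J \subseteq R$ and $J/I \cong R/P$ for some prime $P$ (take $J/I$ to be an associated prime submodule of $R/I$). If $J \ne R$, then by the inductive hypothesis $\omega'(J) < \infty$, and part (1) gives $\omega'(I) \le \omega'(J) + 1 < \infty$. If $J = R$ one needs the base-type case $\omega'(R/I \cong R/P)$, i.e. $I = P$ prime: then $J_1 \cdots J_n \subset P$ forces some $J_\lambda \subset P$ by primeness, so $\omega'(P) \le 1$. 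This settles part (2).

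For part (3), let $I \in \mathcal I(R)$ and let $z = U_1 \cdots U_m$ be any factorization of $I$ into atoms of $\mathcal I(R)$ with $m = |z| \in \mathsf L_{\mathcal I(R)}(I)$. Then $U_1 \cdots U_m = I \subseteq I$, so by definition of $\omega'(I)$ there is $\Omega \subset [1,m]$ with $|\Omega| \le \omega'(I)$ and $\prod_{\lambda \in \Omega} U_\lambda \subseteq I$; but $I = \prod_{\lambda=1}^m U_\lambda \subseteq \prod_{\lambda \in \Omega} U_\lambda$, so $\prod_{\lambda \in \Omega} U_\lambda = I$ (the product only gets smaller as we add factors), whence $\prod_{\lambda \notin \Omega} U_\lambda$ is a product of atoms with $I \cdot (\text{that product}) = I$; unit-cancellativity of $\mathcal I(R)$ (which holds by Proposition \ref{2.1}.4, noting $R$ Noetherian satisfies the Krull intersection theorem and $d$ is finitary) forces that product to be a unit, i.e. $[1,m] \setminus \Omega = \emptyset$, so $m = |\Omega| \le \omega'(I)$. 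Hence $\sup \mathsf L_{\mathcal I(R)}(I) \le \omega'(I) < \infty$; combined with the fact that $\mathcal I(R)$ is atomic (Noetherian domain, ACC) this shows $\mathcal I(R)$ is a \BF-monoid. The only place I anticipate having to be careful is making sure ``$\prod_{\lambda \in \Omega} U_\lambda \subseteq I$ together with $I \subseteq \prod_{\lambda\in\Omega}U_\lambda$ implies equality, which implies the complementary product is a unit'' is correctly justified via unit-cancellativity rather than cancellativity, since $\mathcal I(R)$ need not be cancellative.
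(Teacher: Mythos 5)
Parts (2) and (3) of your argument are correct and essentially coincide with the paper's: part (3) is the same computation (the two products are equal, and unit-cancellativity of $\mathcal I(R)$ forces the complementary product to be $R$), and part (2) replaces the paper's use of a full prime filtration of $R/I$ by a Noetherian induction via associated primes, which is a harmless variant of the same tool.

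Part (1), however, contains a genuine gap. You assert that $J/I\cong R/P$ is a \emph{simple} $R$-module and deduce the dichotomy ``for any ideal $B$, either $BJ\subset I$ or $B+P=R$.'' This is false unless $P$ is maximal: the prime filtration theorem only produces quotients $R/P$ with $P$ prime, and for non-maximal $P$ the module $R/P$ has proper nonzero submodules, namely $(B+P)/P$ for any $B$ with $P\subsetneq B+P\subsetneq R$. Concretely, in $R=k[x,y]$ with $J=(y)$, $I=(xy)$, $P=(x)$, and $B=(x,y)$, one has $BJ=(xy,y^2)\not\subset I$ while $B+P=(x,y)\ne R$. Consequently your case analysis is not exhaustive: the negation of ``some $J_\mu J\subset I$'' is only ``$J_\mu\not\subset P$ for all $\mu\notin\Omega_0$,'' which does not yield $J_\mu+P=R$, so the branch with $1=x+y$ cannot be entered in general and a third case is left unhandled. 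The repair is to use the correct consequence of $J/I\cong R/P$: since $R/P$ is a domain, $(I:a)=P$ for \emph{every} $a\in J\setminus I$. If $A=\prod_{\lambda\in\Omega_0}J_\lambda\not\subset I$, pick $a\in A\setminus I\subset J\setminus I$; then $a\cdot\prod_{\mu\notin\Omega_0}J_\mu\subset J_1\cdots J_n\subset I$ forces $\prod_{\mu\notin\Omega_0}J_\mu\subset(I:a)=P$, so by primeness of $P$ some $J_\ell\subset P$ with $\ell\notin\Omega_0$, whence $J_\ell A\subset PJ\subset I$ and $\Omega_0\cup\{\ell\}$ works. This is exactly the paper's proof of part (1).
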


\begin{proof}
	Since $R$ satisfies Krull's Intersection Theorem, $\mathcal I (R)$ is unit-cancellative by Proposition \ref{2.1}.4.
	
	1. Since $J/I \cong R/P$, we obtain  $PJ \subset I$ and  $P = \{s\in R \colon sr \in I \}$ for every $r \in J\setminus I$. If $J_1, \ldots, J_n \in \mathcal{I}(R)$ with $J_1 \cdot \ldots \cdot J_n \subset I$, then $J_1 \cdot \ldots \cdot J_n \subset J$. Therefore, there exists a subset $\Omega \subset [1,n]$ such that $|\Omega| \le \omega'(J)$ and $\prod_{\lambda \in \Omega} J_{\lambda} \subset J$. If $\prod_{\lambda \in \Omega} J_{\lambda} \subset I$, then the statement follows. If $\prod_{\lambda \in \Omega} J_{\lambda} \not \subset I$, then there exists $r \in R$ with $r \in \prod_{\lambda \in \Omega} J_{\lambda} \setminus I \subset J\setminus I$.  Thus, we have
	\[
	r\cdot \prod_{\lambda \in [1,n] \setminus \Omega} J_{\lambda} \subset J_1 \cdot \ldots \cdot J_n \subset I \,,
	\]
and so $\prod_{\lambda \in [1,n] \setminus \Omega} J_{\lambda} \subset P$. Therefore, there exists $\ell \in [1,n ] \setminus \Omega$ such that $J_\ell \subset P$, whence
	\[
	J_\ell \cdot \prod_{\lambda \in \Omega} J_{\lambda} \subset PJ \subset I\,.
	\]
	
	2.  We need the following module theoretic result: if $M$ is a nonzero finitely generated $R$-module and $N\subset M$ a submodule, then there exist a chain of submodules $N=N_0\subset N_1\subset \ldots \subset N_m = M$ and prime ideals $P_i \subset R$ such that $N_{i+1}/N_i \cong R/P_i$ for all $i\in [0,m-1]$ ( \cite[Theorem 6.4]{Ma97}). Thus since   $\omega'(R) = 0$, the claim follows by 1.
	
	3. Let $I, J_1, \ldots, J_n \in \mathcal{I}(R)$ with $I = J_1 \cdot \ldots \cdot J_n$. Then there exists a subset $\Omega \subset [1,n]$ such that $|\Omega| \le \omega'(I)$ and $J' = \prod_{\lambda \in \Omega} J_{\lambda} \subset I$. Setting $J'' = \prod_{\lambda \in [1,n] \setminus \Omega} J_{\lambda}$, we obtain that
	\[
	J' \subset I = J'J'' \subset J' \,,
	\]
	whence $J' = J'J''$. Therefore $J'' = R$,   $\Omega = [1,n]$, and $n = |\Omega| \le \omega'(I)$. Finally, the in particular statement follows now by 2.
\end{proof}

\smallskip
\section{On unions of sets of lengths and sets of elasticities} \label{3}
\smallskip

Let $H$ be a \BF-monoid. Then
\[
\Delta (H) = \bigcup_{L \in \mathcal L (H)} \Delta (L) \ \subset \N
\]
is the {\it set of distances} of $H$. Let $k \in \N_0$. If $H = H^{\times}$, then we set $\mathcal U_k (H) = \{k\}$, and otherwise we set
\[
\mathcal U_k (H) = \bigcup_{k \in L, \, L \in \mathcal L (H)} L \ \subset \N_0
\]
is the {\it union of sets of lengths} containing $k$. Then $\rho_k (H) = \sup \mathcal U_k (H)$ is the {\it $k$-th elasticity} of $H$ and
\[
\rho (H) = \sup \{ \rho (L) \colon L \in \mathcal L (H) \} = \lim_{k \to \infty} \frac{\rho_k (H)}{k}
\]
is the {\it elasticity} of $H$. We say that $H$ has {\it accepted elasticity} if there is $L \in \mathcal L (H)$ with $\rho (L) = \rho (H)$. By definition, $H$ is half-factorial if and only if $\Delta (H) = \emptyset$ if and only if $\rho (H)= 1$. If $H$ is not half-factorial, then $\min \Delta (H)= \gcd \Delta (H)$.

We start with a discussion on elasticities.
In \cite{Ch-Ho-Mo06, B-C-H-M06}, Chapman et al. initiated the study of the set $\{\rho (L) \colon L \in \mathcal L (H) \} \subset \Q_{\ge 1}$ of elasticities of all sets of lengths.
By definition, $H$ is half-factorial if and only if  $\{ \rho (L) \colon L \in \mathcal L (H)\} = \{1\}$. The reverse extremal case, namely when the set of elasticities is as large as possible,  found special attention.
We say that $H$ is {\it fully elastic} if for every rational number $q$ with $1 < q < \rho (H)$ there is an $L \in \mathcal L (H)$ such that $\rho (L) = q$.
Thus, by definition, every half-factorial monoid is fully elastic.    For a detailed study of sets of elasticities in the setting of locally finitely generated monoids we refer to \cite{Zh19a}.

Next we discuss the structure of sets of lengths and of their unions. To do so, we need the concept of almost arithmetic (multi) progressions.
Let $d \in \N$, $M \in \N_0$, and $\{0,d\} \subset \mathcal D \subset [0,d]$.  A subset $L \subset \Z$ is called an
\begin{itemize}
\item {\it almost arithmetic multiprogression} (AAMP) with difference $d$, period $\mathcal D$, and bound $M$ if
      \[
      L = y + (L' \cup L^* \cup L'') \subset y + \mathcal D + d \Z \,,
      \]
      where $y \in \Z$ is a shift parameter, $L^*$ is finite nonempty, with $\min L^* = 0$ and $L^* = (\mathcal D + d \Z) \cap [0, \max L^*]$, $L' \subset [-M,-1]$, and $L'' \subset \max L^* + [1,M]$, and

\item {\it almost arithmetic progression} (AAP) with difference $d$ and bound $M$ if $L$ can be written in the form
      \[
      L = y + (L' \cup L^* \cup L'') \subset y + d \Z \,,
      \]
      where $y \in \Z$ is a shift parameter, $L' \subset [-M,-1]$, $L^*$ is a nonempty arithmetic progression with difference $d$ and $\min L^* = 0$,  $L'' \subset \max L^* + [1,M]$ if $L^*$ is finite, and $L'' = \emptyset$ if $L^*$ is infinite.
\end{itemize}
Note that AAPs need not be finite, whereas AAMPs are finite. Moreover, an AAMP with period $\mathcal D = \{0,d\}$ is an AAP with difference $d$.
We say that $H$ satisfies the
\begin{itemize}
\item {\it Structure Theorem for Sets of Lengths} if there are $M \in \N_0$ and a finite nonempty set $\Delta \subset \N$ such that every $L \in \mathcal L (H)$ is an AAMP with difference $d \in \Delta$ and bound $M$, and

\item {\it Structure Theorem for Unions} if  there are $d \in \N$ and $M \in \N_0$ such that $\mathcal U_k (H)$ is an AAP with difference $d$ and bound $M$ for all sufficiently large $k \in \N$.
\end{itemize}
We refer to \cite[Chapter 4.7]{Ge-HK06a} for background on the Structure Theorem for Sets of Lengths and to  \cite{Ga-Ge09b, F-G-K-T17,  Tr19a, Zh20a}, \cite[Theorem 6.6]{Bl-Ga-Ge11a}, \cite[Proposition 4.9]{Ch-Go-Go20}, \cite[Theorem 1.1]{Ba-Sm18}, \cite[Theorem 3.9]{Ba-Sa20a}, \cite[Theorem 5.4]{Oh-Zh20a}, \cite[Theorem 5.5]{Oh20a} for background and recent progress on the Structure Theorem for Unions.

The next lemma gathers  simple properties of unions of sets of lengths which we need in the sequel.

\smallskip
\begin{lemma} \label{3.1}
Let $H$ be a \BF-monoid and $k, \ell \in \N$.
\begin{enumerate}
\item $\mathcal U_k (H) + \mathcal U_{\ell} (H) \subset \mathcal U_{k+\ell} (H)$.

\item We have $k \in \mathcal U_{\ell} (H)$ if and only if $\ell \in \mathcal U_k (H)$.

\item If $\mathcal U_2 (H) = \N_{\ge 2}$, then $\mathcal U_k (H) = \N_{\ge 2}$ for all $k \ge 2$.
\end{enumerate}
\end{lemma}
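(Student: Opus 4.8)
The plan is to prove the three parts in order, exploiting the basic fact that concatenating factorizations gives an addition on sets of lengths, and that all three statements are ultimately about the combinatorics of the sets $\mathsf L(a)$.

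\medskip

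For part (1), I would argue as follows. Take $m \in \mathcal U_k(H)$ and $n \in \mathcal U_\ell(H)$; I need $m+n \in \mathcal U_{k+\ell}(H)$. By definition there are $a, b \in H$ with $k, m \in \mathsf L(a)$ and $\ell, n \in \mathsf L(b)$. Pick factorizations $z \in \mathsf Z(a)$ with $|z| = k$, $z' \in \mathsf Z(a)$ with $|z'| = m$, $w \in \mathsf Z(b)$ with $|w| = \ell$, $w' \in \mathsf Z(b)$ with $|w'| = n$. Then $zw$ and $z'w'$ are both factorizations of $ab$ (here one uses that $\pi$ is a monoid homomorphism, i.e.\ that $\mathsf Z(ab) \supseteq \mathsf Z(a)\mathsf Z(b)$, which holds since $H_{\red}$ is a monoid), so $|zw| = k+\ell \in \mathsf L(ab)$ and $|z'w'| = m+n \in \mathsf L(ab)$; hence $m+n \in \mathcal U_{k+\ell}(H)$. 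A small caveat: if $H = H^\times$ the unions are singletons $\{k\}$ and the claim is trivial, so I would dispatch that case first. I do not expect any obstacle here.

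\medskip

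Part (2) is essentially a reformulation: $k \in \mathcal U_\ell(H)$ means there is $L \in \mathcal L(H)$ with $\{k,\ell\} \subset L$, and this condition is visibly symmetric in $k$ and $\ell$. So the proof is one line, modulo again separating off the trivial case $H = H^\times$ (where $\mathcal U_k(H) = \{k\}$ and $k \in \mathcal U_\ell(H) \iff k = \ell \iff \ell \in \mathcal U_k(H)$).

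\medskip

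Part (3) is the only part with real content. Assume $\mathcal U_2(H) = \N_{\ge 2}$; I must show $\mathcal U_k(H) = \N_{\ge 2}$ for every $k \ge 2$. The inclusion $\mathcal U_k(H) \subseteq \N_{\ge 2}$: note first that $H \ne H^\times$ (otherwise $\mathcal U_2(H) = \{2\} \ne \N_{\ge 2}$), and that $1 \notin \mathcal U_k(H)$ for $k \ge 2$ — indeed if $1 \in \mathsf L(a)$ with also $k \in \mathsf L(a)$, $k \ge 2$, then $a$ is an atom with a factorization of length $k \ge 2$, contradicting that atoms have $\mathsf L(a) = \{1\}$ (this needs that atoms aren't units and that a length-$k$ factorization of an atom forces $k=1$). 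Also $0 \notin \mathcal U_k(H)$ for $k \ge 2$ since $0 \in \mathsf L(a)$ iff $a \in H^\times$ iff $\mathsf L(a) = \{0\}$. So $\mathcal U_k(H) \subseteq \N_{\ge 2}$. For the reverse inclusion $\N_{\ge 2} \subseteq \mathcal U_k(H)$, I would induct on $k$, with $k=2$ being the hypothesis. Given the claim for $k$, use part (1): $\mathcal U_k(H) + \mathcal U_2(H) \subseteq \mathcal U_{k+2}(H)$, and more usefully $\mathcal U_{k+1}(H) \supseteq \mathcal U_2(H) + \mathcal U_{k-1}(H)$ — hmm, that needs the claim at $k-1$ and $k+1 = 2 + (k-1)$ only works starting from $k \ge 2$, giving $\mathcal U_{k+1}(H) \supseteq \N_{\ge 2} + \N_{\ge 2} = \N_{\ge 4}$, which misses $2$ and $3$. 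So the cleaner route: for any $k \ge 2$ and any target $n \ge 2$, I want $n \in \mathcal U_k(H)$, equivalently by (2) that $k \in \mathcal U_n(H)$. Since $\mathcal U_2(H) = \N_{\ge 2}$, we have $k \in \mathcal U_2(H)$, i.e.\ $2 \in \mathcal U_k(H)$. Now I would show by induction on $j \ge 2$ that $j \in \mathcal U_k(H)$ for all $k \ge 2$: the base $j = 2$ is $2 \in \mathcal U_k(H)$, which follows from $k \in \mathcal U_2(H) = \N_{\ge 2}$ via (2). For the step, suppose $j \in \mathcal U_k(H)$ for all $k \ge 2$; then for $k \ge 2$, using (1), $\mathcal U_{k}(H) = \mathcal U_{k-1+1}(H) \supseteq \mathcal U_{k-1}(H) + \mathcal U_1(H)$. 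Since $\mathcal U_1(H) = \{1\} \cup (\text{stuff})$ — actually $1 \in \mathcal U_1(H)$ always, and if $j \in \mathcal U_{k-1}(H)$ then $j+1 \in \mathcal U_{k}(H)$. But I need $k - 1 \ge 2$, i.e.\ $k \ge 3$; for $k = 2$ I separately note $j+1 \in \mathcal U_2(H) = \N_{\ge 2}$. So by induction every $j \ge 2$ lies in every $\mathcal U_k(H)$, $k \ge 2$, completing the argument. The main (minor) obstacle is simply bookkeeping the base cases $k=2$ versus $k \ge 3$ so the induction on the length-target $j$ closes without gaps; no deep idea is required.
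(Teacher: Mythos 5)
Your proposal is correct and uses essentially the same ingredients as the paper: parts (1) and (2) are the definitional/concatenation arguments the paper treats as immediate, and part (3) is proved by combining the shift $\mathcal U_1(H)+\mathcal U_{k-1}(H)\subset \mathcal U_k(H)$ from part (1) with part (2) to recover the value $2$, exactly as in the paper. The only (cosmetic) difference is that you run the induction over the target length $j$ rather than over the index $k$ of the union, and you spell out the easy containment $\mathcal U_k(H)\subset \N_{\ge 2}$ that the paper leaves implicit.
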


\begin{proof}
1. and 2. follow immediately by the definitions.

3. Suppose that $\mathcal U_2 (H) = \N_{\ge 2}$. We proceed by induction on $k$. Let $k \ge 2$ and suppose that $\mathcal U_k (H) = \N_{\ge 2}$. Then 1. implies that
\[
\N_{\ge 3} = 1 + \N_{\ge 2} = 1 + \mathcal U_k (H) \subset \mathcal U_{k+1} (H) \,.
\]
Since $k+1 \in \mathcal U_2 (H)$, 2. implies that $2 \in \mathcal U_{k+1} (H)$. Thus, we obtain that $\mathcal U_{k+1} (H) = \N_{\ge 2}$.
\end{proof}

\smallskip
In \cite{B-C-C-K-W06} it was proved that every commutative cancellative monoid having a prime element and accepted elasticity is fully elastic. We generalize this result.

\smallskip
\begin{proposition} \label{3.2}
Let $H$ be a  \BF-monoid. Consider the following  conditions.
\begin{itemize}
\item[(a)] There are submonoids $H_1$ and $H_2$ such that $H = H_1 \times H_2$, where $H_1$ is half-factorial but not a group. This holds true in particular if $H$ has a cancellative prime element.

\item[(b)] For every $q \in \Q$ with $1 < q < \rho (H)$, there is an element $c \in H$ such that
      \[
      \rho (\mathsf L (c^k) ) = \rho (\mathsf L (c) ) > q \quad \text{for all $k \in \N$} \,.
      \]
      This holds true in particular if $H$ has accepted elasticity.

\item[(b')] For every $m \in \N_{\ge 2}$ there is $L \in \mathcal L (H_2)$ with $\min L = 2$ and $\max L = m$.
\end{itemize}
If Conditions (a) and (b) or Conditions (a) and (b') hold, then $H$ is fully elastic.
\end{proposition}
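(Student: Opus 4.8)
The plan is to build, for each target rational $q$ with $1 < q < \rho(H)$, a suitable element of $H$ and to exploit the half-factorial, non-group tensor factor $H_1$ to dilute its elasticity down to exactly $q$. First I would record the basic mechanism: since $H_1$ is half-factorial, $\mathsf L_H(a_1 a_2) = \mathsf L_{H_1}(a_1) + \mathsf L_{H_2}(a_2)$ and the first summand is a singleton $\{n\}$ whenever $a_1$ is a product of $n$ atoms of $H_1$; in particular $\mathsf L_H(a_1 a_2)$ is just a shift $n + \mathsf L_{H_2}(a_2)$ of a set of lengths in $H_2$, so $\rho(\mathsf L_H(a_1 a_2)) = \dfrac{n + \max \mathsf L_{H_2}(a_2)}{n + \min \mathsf L_{H_2}(a_2)}$. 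Choosing a fixed atom $p \in H_1$ (which exists because $H_1$ is not a group) and multiplying by $p^n$, the elasticity of such a product can be pushed to any value in the half-open interval $\bigl(1, \rho(\mathsf L_{H_2}(a_2))\bigr]$ that is of the form $\dfrac{n+M}{n+m}$ with $m = \min \mathsf L_{H_2}(a_2)$, $M = \max \mathsf L_{H_2}(a_2)$; as $n$ ranges over $\N_0$ these values are dense enough and, crucially, one can hit a prescribed rational exactly by also replacing $a_2$ by a power $a_2^j$, which scales $m$ and $M$ to $jm$ and $jM$ (here half-factoriality of $H_1$ is again used, together with the fact that elasticity of a set of lengths behaves well under the sumset $\mathsf L(a_2^j) \supset j \cdot \mathsf L(a_2)$ — more carefully, one uses $\min \mathsf L(a_2^j) \le j\min\mathsf L(a_2)$ and $\max \mathsf L(a_2^j) \ge j\max\mathsf L(a_2)$, combined with the BF property to keep things finite).

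Next I would split according to which hypothesis (b) or (b$'$) is assumed. Under (a) and (b): given $q$, pick $c \in H$ with $\rho(\mathsf L(c^k)) = \rho(\mathsf L(c)) =: r > q$ for all $k$; write $c = c_1 c_2$ along $H = H_1 \times H_2$, set $m = \min \mathsf L_{H_2}(c_2^k)$ and $M = \max \mathsf L_{H_2}(c_2^k)$ (after discarding the half-factorial part, these satisfy $M/m = r$ for suitable $k$, or grow linearly in $k$), and then solve $\dfrac{n + jM}{n + jm} = q$ for nonnegative integers $n, j$ — this is a linear Diophantine condition $n(q-1) = jM - q\,jm = j(M - qm)$, solvable in $\N_0$ because $q < r = M/m$ forces $M - qm > 0$ and $q - 1 > 0$, so writing $q = u/v$ in lowest terms one takes $j$ a multiple of $v$ and $n$ the corresponding integer. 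The element $p^n c_1 c_2^j$ then has a set of lengths whose elasticity is exactly $q$ (the half-factorial $c_1$-part contributes only a harmless shift, absorbed into $n$). Under (a) and (b$'$): for each $m \in \N_{\ge 2}$ there is $L \in \mathcal L(H_2)$ with $\min L = 2$, $\max L = m$, say $L = \mathsf L_{H_2}(a)$; then $\mathsf L_H(p^n a) = n + L$ has elasticity $\dfrac{n+m}{n+2}$, and as $m$ and $n$ vary over the allowed ranges these rationals are exactly all of $\Q \cap (1, \rho(H))$ — given $q = u/v$ one solves $\dfrac{n+m}{n+2} = q$, i.e. $m = q(n+2) - n$, choosing $n$ so that $n+2$ is divisible by $v$ and $m \ge 2$, which is possible as long as $q$ is below the supremum; one checks $m < \rho(H)\cdot(\text{something})$ stays in range.

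The main obstacle I anticipate is bookkeeping around the distinction between $\mathsf L(a_2^j)$ and $j \cdot \mathsf L(a_2)$: the elasticity does not simply multiply under powers, so I cannot naively say $\rho(\mathsf L(c^k)) = \rho(\mathsf L(c))$ transfers to the $H_2$-component without care. Hypothesis (b) is phrased precisely to sidestep this — it hands us the power-stability $\rho(\mathsf L(c^k)) = \rho(\mathsf L(c))$ directly — so the real work is verifying that after projecting away the half-factorial factor $H_1$ the relevant data $(\min, \max)$ of $\mathsf L_{H_2}(c_2^k)$ still realizes every intermediate rational when combined with the free shift coming from powers of the atom $p$. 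A secondary nuisance is the boundary case $q$ arbitrarily close to $\rho(H)$: one must ensure the solved values $n, j$ (resp. $n, m$) remain genuinely nonnegative integers, which is where the strict inequality $q < \rho(H)$ and the density of the fractions $\dfrac{n+jM}{n+jm}$ (as $n \to \infty$ they decrease continuously from near $M/m$ toward $1$) are used. Finally, I would remark that the special cases — $H$ having a cancellative prime element forces (a) via $H_1 = \langle p \rangle \cong (\N_0,+)$, and accepted elasticity forces (b) by taking $c$ to realize $\rho(H)$ and noting $\rho(\mathsf L(c^k)) \ge \rho(\mathsf L(c)) = \rho(H)$ hence equality — so no separate argument is needed for them.
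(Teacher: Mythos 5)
Your proposal is correct and follows essentially the same route as the paper: fix an atom $p$ of the half-factorial factor $H_1$, use $\mathsf L_H(p^n a) = n + \mathsf L_H(a)$ together with the power-stability $\max \mathsf L(c^k) = k\max\mathsf L(c)$ and $\min \mathsf L(c^k) = k\min\mathsf L(c)$ forced by condition (b), and solve the resulting linear equation for the two exponents; the (b$'$) case is handled identically in both. The only cosmetic difference is that you project onto the $H_2$-component before solving, whereas the paper works with $c$ directly and writes down the explicit solution $i = r-s$, $j = s\max\mathsf L(c) - r\min\mathsf L(c)$ for $q=r/s$.
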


\begin{proof}
If $\rho (H)=1$, then Condition (b) holds trivially, $H$ is half-factorial, and hence it is fully elastic. From now on we suppose that $\rho (H) > 1$.

1.  We first check the two in particular statements stated in (a) and (b).

(i) If  $p \in H$ is a cancellative prime element of $H$, then
\[
H = \mathcal F (\{p\}) \times T \,, \text{where} \quad T = \{ a \in H \colon p \nmid a \} \,.
\]
If $a \in H$ and $k \in \N$, then
\[
\max \mathsf L (a^k) \ge k \max \mathsf L (a) \quad \text{and} \quad \min \mathsf L (a^k) \le k \min \mathsf L (a) \,,
\]
whence
\begin{equation} \label{calc1}
\rho ( \mathsf L (a^k) ) = \frac{\max \mathsf L (a^k)}{\min \mathsf L (a^k)} \ge \frac{k \max \mathsf L (a)}{k \min \mathsf L (a)} = \rho (\mathsf L (a) ) \,.
\end{equation}

(ii) Suppose that $H$ has accepted elasticity, say  $\rho (\mathsf L (a)) = \rho (H)$. By \eqref{calc1}, we infer that $\rho ( \mathsf L (a^k) ) = \rho (\mathsf L (a)) = \rho (H)$ for all $k \in \N$ and, if $\rho ( \mathsf L (a^k) ) = \rho (\mathsf L (a))$ for some $k \in \N$, then
\begin{equation} \label{calc2}
\max \mathsf L (a^k) = k \max \mathsf L (a) \quad \text{and} \quad \min \mathsf L (a^k) = k \min \mathsf L (a) \,.
\end{equation}

(iii) Suppose that (a) and (b) hold. To show that $H$ is fully elastic, we choose an atom $p \in H_1$.
Then for every $a = a_1a_2 \in H=H_1 \times H_2$ and every $k \in \N$, we have
\begin{equation} \label{calc3}
\mathsf L_H (p^ka) = \mathsf L_{H_1} (p^ka_1) + \mathsf L_{H_2}(a_2) = k+ \mathsf L_{H_1}(a_1) + \mathsf L_{H_2} (a_2) = k + \mathsf L_H (a) \,.
\end{equation}
Let $q \in \mathbb Q$ with $ 1 < q < \rho (H)$, and let $c \in H$ with
\[
\rho (\mathsf L (c^k) ) = \rho (\mathsf L (c) ) > q \quad \text{for all $k \in \N$} \,.
\]
We set
\[
q = \frac{r}{s} \quad \text{with} \quad r, s \in \N \,,
\]
\[
i = r - s, \quad  j = s \max \mathsf L (c) - r \min \mathsf L (c) , \quad \text{and} \quad b = c^i p^j \,.
\]
Then, by \eqref{calc2} and \eqref{calc3},
\[
\max \mathsf L (b) = j + i \max \mathsf L (c) \quad \text{and} \quad \min \mathsf L (b) = j + i \min \mathsf L (c) \,.
\]
Putting all together we obtain that
\[
\begin{aligned}
\rho ( \mathsf L (b) ) & = \frac{\max \mathsf L (b)}{\min \mathsf L (b)} = \frac{j + i \max \mathsf L (c)}{j + i \min \mathsf L (c)} \\
 & = \frac{(r-s)\max \mathsf L (c) + s \max \mathsf L (c) - r \min \mathsf L (c)}{(r-s)\min \mathsf L (c) + s \max \mathsf L (c) - r \min \mathsf L (c)} \\
 & = \frac{r (\max \mathsf L (c) - \min \mathsf L (c))}{s ( \max \mathsf L (c) - \min \mathsf L (c) )} \\
 & = \frac{r}{s} = q \,.
\end{aligned}
\]

2. Suppose that (a) and (b') hold and let $q \in \Q$ with $1 < q < \rho (H)$. Then there are $r, s \in \N$ such that $q = r/s$. By assumption, there is $a_2 \in H_2$ such that $\min \mathsf L (a_2) = 2$ and $\max \mathsf L (a_2) = r-s+2$. We choose an atom $u \in H_1$ and define
\[
b = u^{s-2} a_2 \,.
\]
Then
\[
\rho (\mathsf L (b)) = \frac{\max \mathsf L (b)}{\min \mathsf L (b)} = \frac{\max \mathsf L (u^{s-2}) + \max \mathsf L (a_2)}{\min \mathsf L (u^{s-2}) + \min \mathsf L (a_2)} = \frac{(s-2)+(r-s+2)}{(s-2)+2} = q \,. \qedhere
\]
\end{proof}

\smallskip
\begin{example} \label{3.3}~

1. Consider the additive monoid $H_2 = ( \N^2 \cup \{(0,0)\}, +) \subset ( \N_0^2 , +)$. For every $m \ge 2$, $(m-1,1), (1, m-1) \in \mathcal A (H_2)$ and we have
\[
(m,m) = (m-1,1) + (1, m-1) = \underbrace{(1,1) + \ldots + (1,1)}_{m-\text{times}} \,,
\]
whence $\min \mathsf L ( (m,m) ) = 2$ and $\max \mathsf L ( (m,m)) = m$. Thus, Condition (b') of Proposition \ref{3.2} is satisfied. Moreover, $\mathcal U_2 (H) = \N_{\ge 2}$, whence $\mathcal U_k (H) = \N_{\ge 2}$ for all $k \ge 2$ by Lemma \ref{3.1}. The forthcoming Proposition \ref{4.2} shows that $H_2$ is not fully elastic (because $H_2$ is strongly primary). However, if $H_1$ is any half-factorial monoid, then $H_1 \times H_2$ is fully elastic by Proposition \ref{3.2}.

2. In Section \ref{5}, we show that the monoid  of nonzero ideals of a polynomial ring with at least two variables also satisfy Conditions (a) and (b') of Proposition \ref{3.2} (Theorem \ref{5.1}).
\end{example}

\smallskip
\section{On monoids of invertible ideals of weakly Krull domains} \label{4}
\smallskip

In this section  we study the algebraic structure of monoids of invertible ideals and we derive some consequences for  their arithmetic.  Our focus will be on weakly Krull domains and Krull domains.
We start with a result in the setting of $r$-invertible $r$-ideals. Then our discussion is divided into four subsections, namely on weakly Krull domains \ref{4.a}, Krull domains \ref{4.b}, transfer Krull monoids \ref{4.c}, and on the arithmetic of transfer Krull monoids \ref{4.d}.

Let $H$ be a cancellative monoid and $r$ be an ideal system on $H$.
An ideal $I \in  \mathcal F_r (H)$ is called an {\it $r$-cancellation ideal} if whenever $I \cdot_r J_1 = I \cdot_r J_2 $ for  $J_1, J_2 \in \mathcal I_r (H)$, we have $J_1 = J_2$. It is easily seen
that $I$ is an $r$-cancellative if and only if whenever $I \cdot_r J_1 \subset I \cdot_r J_2$ for all $J_1, J_2 \in \mathcal I_r (H)$, we have $J_1 \subset J_2$. All $r$-invertible ideals (whence all principal ideals) are $r$-cancellation ideals, whence $\mathcal I_r^* (H)$ is a cancellative monoid.  A divisorial ideal is $v$-invertible if and only if it is $v$-cancellative (\cite[Chapter 13.4]{HK98}). Let $R$ be a domain. A nonzero ideal $I$ of  $R$ is a cancellation ideal if and only if $I$ is locally  principal (\cite{An-Ro97} and \cite{Ga13a}), and $\mathcal I (R)$ is a cancellative monoid if and only if $R$ is almost Dedekind (\cite[Theorem 23.2]{HK98}).  If $R$ is a Mori domain, then every nonzero locally principal ideal is invertible (\cite[Corollary 1]{An-Za11a}). If $p \in R$ is prime, then $pR$ is a cancellative prime element of $\mathcal I^* (R)$ and of $\mathcal I (R)$. Moreover, if $R$ is noetherian, then $p$ is also a prime element of $\overline R$ (\cite[Lemma 4.7]{Du-On08a}; for more on prime elements in noetherian domains, we refer to \cite{Da78a}).

\smallskip
\begin{theorem} \label{4.1}
Let $H$ be a cancellative monoid and let $r$ be an $r$-noetherian ideal system of $H$.

\begin{enumerate}
\item If $\mathcal I_r^* (H)$  has a  prime element, and if either $\mathcal I_r^* (H)$ has accepted elasticity or is locally finitely generated, then $\mathcal I_r^* (H)$ is fully elastic.

\item Suppose that   $H$ has finite $r$-character and satisfies the $r$-Krull Intersection Theorem.   If $H$  has an $r$-invertible prime $r$-ideal, and if either $\mathcal I_r (H)$ or all divisor-closed submonoids generated by one element have accepted elasticity, then $\mathcal I_r (H)$ is fully elastic.
\end{enumerate}
\end{theorem}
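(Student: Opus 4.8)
The plan is to obtain both statements from Proposition~\ref{3.2}: in each case we check that the ideal monoid under consideration is a BF-monoid satisfying conditions~(a) and~(b) of that proposition (the half-factorial case $\rho=1$ being covered by Proposition~\ref{3.2} itself).

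\emph{Part 1.} Since $r$ is $r$-noetherian, $\mathcal I_r^*(H)$ is a Mori monoid by Proposition~\ref{2.1}.2, hence a reduced cancellative BF-monoid. Because $\mathcal I_r^*(H)$ is cancellative, the assumed prime element is a cancellative prime element, so condition~(a) holds. For condition~(b): if $\mathcal I_r^*(H)$ has accepted elasticity, then (b) follows from the in-particular clause of Proposition~\ref{3.2}. If instead $\mathcal I_r^*(H)$ is locally finitely generated, fix $q\in\Q$ with $1<q<\rho(\mathcal I_r^*(H))$ and choose $I$ with $\rho(\mathsf L(I))>q$. Then $S:=\LK I\RK$ is a divisor-closed submonoid which is finitely generated (here we use that $\mathcal I_r^*(H)$ is reduced), hence has accepted elasticity (see, e.g., \cite{Ge-HK06a}); pick $c\in S$ with $\rho(\mathsf L_S(c))=\rho(S)$. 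Since sets of lengths agree on divisor-closed submonoids, $\rho(\mathsf L(c))=\rho(S)\ge\rho(\mathsf L(I))>q$, and for every $k\in\N$ we have $c^k\in S$, so $\rho(\mathsf L(c^k))\le\rho(S)=\rho(\mathsf L(c))$, while the reverse inequality is the estimate~\eqref{calc1}. Thus condition~(b) holds and Proposition~\ref{3.2} shows that $\mathcal I_r^*(H)$ is fully elastic.

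\emph{Part 2.} As $r$ is $r$-noetherian it is finitary, so Proposition~\ref{2.1}.4 (using the $r$-Krull Intersection Theorem and finite $r$-character) shows that $\mathcal I_r(H)$ is a reduced BF-monoid. Let $P$ be an $r$-invertible prime $r$-ideal of $H$. Being $r$-invertible, $P$ is an $r$-cancellation ideal, hence a cancellative element of $\mathcal I_r(H)$; moreover, $r$-invertibility (via $P^{-1}\cdot_r P=H$) yields that, for $K\in\mathcal I_r(H)$, one has $P\mid_{\mathcal I_r(H)}K$ if and only if $K\subset P$. Hence, if $I,J\in\mathcal I_r(H)$ with $P\mid I\cdot_r J$, then $IJ\subset P$, and since $P$ is a prime ideal of $H$ this forces $I\subset P$ or $J\subset P$, i.e.\ $P\mid I$ or $P\mid J$. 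So $P$ is a cancellative prime element of $\mathcal I_r(H)$ and condition~(a) of Proposition~\ref{3.2} holds. Condition~(b) is checked exactly as in Part~1: if $\mathcal I_r(H)$ has accepted elasticity, (b) is immediate; if instead all divisor-closed submonoids generated by one element have accepted elasticity, then for $1<q<\rho(\mathcal I_r(H))$ one picks $I$ with $\rho(\mathsf L(I))>q$, uses that $\LK I\RK$ has accepted elasticity by hypothesis, and produces $c$ with $\rho(\mathsf L(c^k))=\rho(\mathsf L(c))>q$ for all $k$ just as above. Proposition~\ref{3.2} then gives that $\mathcal I_r(H)$ is fully elastic.

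The step I expect to require the most care is verifying that an $r$-invertible prime $r$-ideal $P$ is a \emph{prime element} of $\mathcal I_r(H)$: the crux is the equivalence ``$P\mid_{\mathcal I_r(H)}K$ if and only if $K\subset P$'', which uses $r$-invertibility in an essential way, after which divisibility of an $r$-product is reduced to containment of the corresponding plain product of ideals and the prime-ideal property of $P$ in $H$ is applied. The other delicate point, in the locally finitely generated case, is the localization of a near-extremal elasticity to a single divisor-closed, one-generated submonoid — where finite generation yields accepted elasticity — and the transfer of the resulting element back to the ambient monoid, which uses the invariance of sets of lengths under passage to divisor-closed submonoids together with the super-/sub-additivity bounds recorded in~\eqref{calc1}.
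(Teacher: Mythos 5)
Your proposal is correct and follows essentially the same route as the paper: both parts reduce to verifying Conditions (a) and (b) of Proposition~\ref{3.2}, using Proposition~\ref{2.1} to get the BF-property and, in the locally finitely generated case, passing to the finitely generated divisor-closed submonoid $\LK I\RK$ to obtain accepted elasticity above $q$. You in fact supply slightly more detail than the paper at two points it leaves implicit — the verification that an $r$-invertible prime $r$-ideal is a cancellative prime element of $\mathcal I_r(H)$ via the equivalence $P\mid K\iff K\subset P$, and the equality $\rho(\mathsf L(c^k))=\rho(\mathsf L(c))$ from the two-sided bound through $\rho(S)$ — both of which are correct.
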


\begin{proof}
In both cases we verify the assumptions of Proposition \ref{3.2}.

1. Since $r$ is $r$-noetherian,   $\mathcal I_r^* (H)$ is a Mori monoid by Proposition \ref{2.1}.2 and hence it is a BF-monoid. Let $P \in \mathcal I_r^* (H)$ be a prime element. Since $\mathcal I_r^* (H)$ is a cancellative monoid, $P$ is a cancellative prime element.

Let $q$ be a rational number with $1 < q < \rho ( \mathcal I^* (H) )$. If $\mathcal I_r^* (H)$ has accepted elasticity, then there is $J \in \mathcal I_r^* (H)$ such that $\rho ( \mathsf L (J) ) =  \rho ( \mathcal I_r^* (H) )$, whence
\[
\rho (\mathsf L (J^k) ) = \rho (\mathsf L (J) ) > q \quad \text{for all $k \in \N$} \,.
\]
Now suppose that $\mathcal I_r^* (H)$ is locally finitely generated. We choose an ideal $I \in \mathcal I_r^* (H)$ with $\rho ( \mathsf L (I) ) > q$. We consider the divisor-closed submonoid $S = \LK I \RK \subset \mathcal I_r^* (H)$. Then $S$ is a finitely generated monoid. Thus, by \cite[Theorem 3.1.4]{Ge-HK06a},  there is an ideal $J \in S$ with $\rho (S) = \rho ( \mathsf L_S (J))  \ge \rho ( \mathsf L_S (I) ) = \rho ( \mathsf L_{\mathcal I_r^*(H)} (I) ) > q$. Therefore, we obtain that
\[
\rho (\mathsf L (J^k) ) = \rho (\mathsf L (J) ) > q \quad \text{for all $k \in \N$} \,.
\]
Thus, Conditions (a) and (b) of Proposition \ref{3.2} are satisfied, whence the assertion follows.

2. Since $H$ is $r$-noetherian, $r$ is a finitary ideal system by \cite[Theorem 3.5]{HK98}. Thus $\mathcal I_r (H)$ is a \BF-monoid by Proposition \ref{2.1}.4. Let $P$ be an $r$-invertible prime $r$-ideal. Then $P$ is a cancellative prime element of $\mathcal I_r (H)$. Arguing as in 1., we obtain an $r$-ideal $J$ such that
\[
\rho (\mathsf L (J^k) ) = \rho (\mathsf L (J) ) > q \quad \text{for all $k \in \N$} \,.
\]
Thus, Conditions (a) and (b) of Proposition \ref{3.2} are satisfied, whence the assertion follows.
\end{proof}

\smallskip
\subsection{Weakly Krull domains.}  \label{4.a}
In this subsection, we consider weakly Krull domains. We start with the local case and for this we
need the concept of primary monoids. Let $H$ be a cancellative monoid and $\mathfrak m = H \setminus H^{\times}$. Then $H$ is called
\begin{itemize}
\item {\it primary} if $H\ne H^{\times}$ and for all $a, b\in\mathfrak m$ there is $n\in\N$ such that $b^n\in aH$, and
\item {\it strongly primary} if $H\ne H^{\times}$ and for every $a\in\mathfrak m$ there is $n\in\N$ such that $\mathfrak m^n\subset aH$ (we denote by $\mathcal M(a)$ the smallest $n\in\N$ having this property).
\end{itemize}
Primary Mori monoids are strongly primary and strongly primary monoids are BF-monoids. The multiplicative monoid of nonzero elements of a domain is primary if and only if the domain is one-dimensional and local. If $R$ is a one-dimensional local Mori domain, then $\mathcal I_v^* (R) = \mathcal I^* (R) \cong R^{\bullet}$ and $R^{\bullet}$ is locally tame strongly primary (\cite[Corollary 3.10]{Ge-Ro20a}).

\smallskip
\begin{proposition} \label{4.2}
Let $H$ be a strongly primary monoid.
\begin{enumerate}
\item If $H$ is not half-factorial, then there is $\beta \in \Q_{>1}$ such that $\rho (L) \ge \beta$ for all $L \in \mathcal L (H)$ with $\rho (L) \ne 1$.  In particular, $H$ is fully elastic if and only if it is half-factorial.

\item If $H$ is locally tame, then $H$ satisfies the Structure Theorem for Sets of Lengths and the Structure Theorem for Unions.
\end{enumerate}
\end{proposition}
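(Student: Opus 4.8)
The plan is to prove the two parts by rather different means: part~1 is a self-contained pigeonhole estimate, whereas part~2 reduces to the general structure theory of locally tame monoids once the relevant finiteness conditions are in place. For part~1, since $H$ is not half-factorial I fix an element $c \in \mathfrak m$ with $m := \min \mathsf L(c) < M := \max \mathsf L(c)$ and put $N := \mathcal M(c) \in \N$, so that $\mathfrak m^{N} \subseteq cH$; I claim that $\beta := 1 + \frac{1}{2N} \in \Q_{>1}$ does the job. Let $L = \mathsf L(a) \in \mathcal L(H)$ with $\rho(L) \ne 1$. Since $H$ is a \BF-monoid, $L$ is finite and nonempty, and $\rho(L) \ne 1$ forces $k := \min L$ and $\ell := \max L$ to satisfy $2 \le k < \ell$. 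If $k \le 2N$, then $\rho(L) = \ell/k \ge (k+1)/k \ge 1 + \frac{1}{2N} = \beta$. If $k > 2N$, take a shortest factorization $a = u_1 \cdots u_k$ into atoms and partition the $u_i$ into $q := \lfloor k/N \rfloor \ge 1$ blocks of $N$ atoms each (with a remainder of fewer than $N$ atoms); each block lies in $\mathfrak m^{N} \subseteq cH$, so collecting factors yields $a = c^{q} b$ for some $b \in H$. Since $qm, qM \in \mathsf L(c^{q})$ and $\mathsf L(a) \supseteq \mathsf L(c^{q}) + \mathsf L(b)$, we obtain $k \le qm + \min \mathsf L(b)$ and $\ell \ge qM + \min \mathsf L(b)$, hence $\ell \ge k + q(M - m) \ge k + q$. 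As $q > k/N - 1$ and $k > 2N$, it follows that $\rho(L) = \ell/k \ge 1 + q/k > 1 + \frac{1}{N} - \frac{1}{k} > 1 + \frac{1}{2N} = \beta$. Thus $\rho(L) \ge \beta$ in all cases. The ``in particular'' is then immediate: a half-factorial monoid is fully elastic by convention, while if $H$ is not half-factorial, every rational $q$ with $1 < q < \min(\beta, \rho(H))$ is the elasticity of no set of lengths, so $H$ is not fully elastic.

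For part~2, I would appeal to the general theory. One first records that in a strongly primary monoid $\max \mathsf L(a) \le \mathcal M(a)$ for every $a \in \mathfrak m$: a factorization $a = v_1 \cdots v_{\ell}$ into atoms with $\ell > \mathcal M(a)$ gives $v_1 \cdots v_{\mathcal M(a)} = a h$ for some $h \in H$, and cancellativity forces $v_{\mathcal M(a)+1} \cdots v_{\ell}\, h = 1$, contradicting that the $v_j$ are atoms. Combining local tameness with strong primariness, one then deduces that $\Delta(H)$ is finite and that the remaining finiteness data required by the structure theorems are available; this is the substance of the arithmetic of locally tame strongly primary monoids, cf.\ \cite{Ge-Ro20a}. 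Granting this, the Structure Theorem for Sets of Lengths follows from the general structure theorem for locally tame \BF-monoids with finite set of distances, and the Structure Theorem for Unions follows from the corresponding general result on unions of sets of lengths; see \cite{Ge-HK06a, Ga-Ge09b} and the references collected in Section~\ref{3}.

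The main obstacle lies in part~2: local tameness by itself does not force $\Delta(H)$ to be finite for an arbitrary \BF-monoid, so strong primariness has to be used in an essential way to supply the missing finiteness before the general structure theorems can be applied. Part~1, by contrast, requires only the simple idea that packing a long factorization with copies of the witness element $c$ makes the length gap $M - m$ accumulate linearly in the factorization length, which is exactly what is needed to keep the elasticity uniformly bounded away from $1$.
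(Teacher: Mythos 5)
Your proposal is correct, and for part~1 it takes a genuinely different route from the paper: the paper disposes of part~1 by citing \cite[Theorem 5.5]{Ge-Sc-Zh17b} and of part~2 by citing \cite[Theorem 4.1]{Ge-Go-Tr21}, whereas you give a self-contained argument for part~1. Your packing argument checks out: the case split at $k \le 2N$ versus $k > 2N$ is handled correctly; the inclusion $\mathsf L(a) \supseteq \mathsf L(c^{q}) + \mathsf L(b)$ yields exactly the inequalities $k \le qm + \min \mathsf L(b)$ and $\ell \ge qM + \min \mathsf L(b)$ that you need; and the final estimate $\rho(L) \ge 1 + q/k > 1 + 1/N - 1/k > 1 + 1/(2N)$ is valid because $k > 2N$ and $q = \lfloor k/N\rfloor$. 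What this buys is a transparent, quantitative lower bound $\beta = 1 + 1/(2\mathcal M(c))$, where $c$ is any witness to non-half-factoriality, in place of a black-box citation, with no extra hypotheses; the deduction of the ``in particular'' statement from it is also fine. For part~2 you end up doing essentially what the paper does --- deferring to the structure theory of locally tame strongly primary monoids --- except that you interpose the (correct) observation $\max \mathsf L(a) \le \mathcal M(a)$ and then assert, without proof, that local tameness together with strong primariness supplies finiteness of $\Delta(H)$ and the remaining data needed by the general structure theorems. That assertion is precisely the substance of \cite[Theorem 4.1]{Ge-Go-Tr21}, so your part~2 is a citation in the same spirit as the paper's rather than an independent argument; be aware that, stated for an arbitrary locally tame \BF-monoid, ``finite $\Delta(H)$ implies the Structure Theorem for Unions'' is not automatic, so the reduction really does have to go through the strongly primary case treated in that reference.
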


\begin{proof}
1. The first statement follows by \cite[Theorem 5.5]{Ge-Sc-Zh17b}. Since half-factorial monoids are fully elastic, the in particular statement holds.

2. This follows from \cite[Theorem 4.1]{Ge-Go-Tr21}.
\end{proof}

\smallskip
A family of monoid homomorphisms $\boldsymbol \varphi = (\varphi_p \colon H \to D_p)_{p \in P}$ is said to be
\begin{itemize}
\item of {\it finite character} if the set $\{p \in P \colon \varphi_p (a) \notin D_p^{\times} \}$ is finite for all $a \in H$, and

\item a {\it defining family} (for $H$) if it is of finite character and
      \[
      H = \bigcap_{p \in P} H_{\varphi_p} \,.
      \]
\end{itemize}
If $\boldsymbol \varphi$ is of finite character, then it induces a monoid homomorphism
\[
\varphi \colon H \to D = \coprod_{p \in P}(D_p)_{\red} \,, \quad \text{defined by} \quad \varphi (a) = \Big( \varphi_p (a)D_p^{\times} \Big)_{p \in P} \,,
\]
and  $\boldsymbol \varphi$ is a defining family if and only if $\varphi$ is a divisor homomorphism.

\smallskip
We recall the concept of weak divisor theories and weakly Krull monoids (\cite{HK95a}, \cite[Chapter 22]{HK98}). A monoid is said to be {\it weakly factorial} if it is cancellative and every nonunit is a finite product of primary elements. Every reduced weakly factorial monoid has a unique decomposition in the form
\[
D = \coprod_{p \in P} D_p, \quad \text{where $D_p \subset D$ are reduced primary submonoids}\,.
\]
Let $D$ be a reduced weakly factorial monoid as above. If $(a^{(i)})_{i \in I}$ is a family of elements $a^{(i)} \in D$ with components $a_p^{(i)}$, then $a$ is called a {\it strict greatest common divisor} of $(a^{(i)})_{i \in I}$, we write
\[
a = \wedge (a^{(i)})_{i \in I} \,,
\]
if the following two properties are satisfied for all $p \in P$:
\begin{itemize}
\item $a_p \mid a_p^{(i)}$ for all $i \in I$, and

\item $a_p = a_p^{(i)}$ for at least one $i \in I$.
\end{itemize}
A monoid homomorphism $\partial \colon H \to D$ is called a {\it weak divisor theory} if the following two conditions are satisfied:
\begin{itemize}
\item[(a)] $\partial$ is a divisor homomorphism and $D$ is reduced weakly factorial.

\item[(b)] For every $a \in D$, there are $a_1, \ldots, a_m \in H$ such that $a = \partial (a_1) \wedge \ldots \wedge \partial (a_m)$.
\end{itemize}
A monoid is said to be a {\it weakly Krull monoid} if it is cancellative and one of the following equivalent conditions is satisfied:
\begin{itemize}
\item $H$ has a weak divisor theory $\partial \colon H \to D$.

\item The family of embeddings $(\varphi_{\mathfrak p} \colon H \hookrightarrow H_{\mathfrak p})_{\mathfrak p \in \mathfrak X (H)}$ is a defining family for $H$.

\item $\varphi \colon H \to \coprod_{\mathfrak p \in \mathfrak X (H)} (H_{\mathfrak p})_{\red}$ is a weak divisor theory.
\end{itemize}
By the uniqueness of weak divisor theories, the {\it (weak divisor) class group} $\mathcal C (H) = \mathsf q (D) / \mathsf q ( \partial (H))$ depends on $H$ only and it is isomorphic to the $t$-class group $\mathcal C_t (H)$ of $H$ (\cite[Theorems 20.4 and 20.5]{HK98}). If $H$ is  weakly Krull Mori, then $\mathcal C (H) \cong \mathcal C_t (H) = \mathcal C_v (H)$. A monoid is weakly factorial if and only if it is weakly Krull with trivial class group. The localizations $H_{\mathfrak p}$ are primary for all $\mathfrak p \in \mathfrak X (H)$.

A domain $R$ is a weakly Krull domain if $R^{\bullet}$ is a weakly Krull monoid. If $R$ is a one-dimensional Mori domain, then $R$ is a  weakly Krull Mori domain,  $\mathcal I_v^* (R) = \mathcal I^* (R)$, and $\mathcal C_v (R) = \Pic (R)$ (\cite[Proposition 2.10.5]{Ge-HK06a}). In particular, orders in holomorphy rings of global fields are weakly Krull Mori domains and every class of their Picard group contains infinitely many invertible prime ideals (\cite[Corollary 2.11.16 and Proposition 8.9.7]{Ge-HK06a}). To mention higher-dimensional weakly Krull domains, recall that all Cohen-Macaulay domains are weakly Krull.  We mention  a  recent characterization of when monoid algebras are weakly Krull. Let $D$ be a domain with quotient field $K$ and let $S$ be a cancellative monoid with torsion-free quotient group $G = \mathsf q (S)$. Suppose that $G$ satisfies the ACC on cyclic subgroups. Then the monoid algebra $D[S]$ is weakly Krull if and only if $D$ is a weakly Krull domain satisfying the $G$-UMT property and $S$ is a weakly Krull monoid satisfying the $K$-UMT property (\cite[Theorem 3.7]{Fa-Wi22a}).
Monoid algebras, that are weakly Krull Mori and have height-one prime ideals in all classes, are studied in \cite{Fa-Wi22c}.

\smallskip
\begin{theorem} \label{4.3}
Let $R$ be a weakly Krull Mori domain. Then $\mathcal I_v^* (R)$ is a reduced weakly factorial Mori monoid.
The inclusion $\mathcal I^* (R) \hookrightarrow \mathcal I_v^* (R)$ is a weak divisor theory,  $\mathcal I^* (R)$ is a weakly Krull Mori monoid and its class group is isomorphic to $\mathcal C_v (R)/\Pic (R)$. If every class of $\mathcal C_v (R)$ contains at least one (resp. infinitely many) $\mathfrak p \in \mathfrak X (R)$, then every class of $\mathcal C_v ( \mathcal I^* (R))$ contains at least one (resp. infinitely many) $\mathfrak q \in \mathfrak X ( \mathcal I^* (R) )$.
\end{theorem}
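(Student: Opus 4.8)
The plan is to prove the four assertions in turn, the backbone being the structural identity $\mathcal I_v^*(R)\cong\coprod_{\mathfrak p\in\mathfrak X(R)}\mathcal I^*(R_\mathfrak p)$ valid for weakly Krull Mori domains, together with the fact that, for a weakly Krull domain, the inclusion $\mathcal H(R)\hookrightarrow\mathcal I_v^*(R)$ is a weak divisor theory (this is just the defining property of $R^\bullet$ being weakly Krull, transported through that identity; see \cite[Chapter 22]{HK98} and \cite{Ge-HK06a}). For the first assertion, each $R_\mathfrak p$ with $\mathfrak p\in\mathfrak X(R)$ is one-dimensional local Mori, so $\mathcal I_v^*(R_\mathfrak p)=\mathcal I^*(R_\mathfrak p)\cong (R_\mathfrak p)^\bullet_{\red}$ is primary; hence $\mathcal I_v^*(R)$, being a coproduct of reduced primary monoids, is reduced weakly factorial, and it is a Mori monoid by Proposition \ref{2.1}.2, since $R$ Mori means that $R^\bullet$ is $v$-noetherian.

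For the second assertion I would verify the two defining conditions of a weak divisor theory for $\partial\colon\mathcal I^*(R)\hookrightarrow\mathcal I_v^*(R)$. Condition (a): $\mathcal I_v^*(R)$ is reduced weakly factorial by the first assertion, and $\partial$ is a divisor homomorphism because in either monoid ``$J$ divides $I$'' is equivalent to the containment $I\subseteq J$, and for $I,J\in\mathcal I^*(R)$ with $I\subseteq J$ the ideal $J^{-1}I$ is invertible, integral, and satisfies $J\cdot(J^{-1}I)=I$. Condition (b): every $a\in\mathcal I_v^*(R)$ is a strict greatest common divisor of finitely many elements of $\mathcal H(R)$, since $\mathcal H(R)\hookrightarrow\mathcal I_v^*(R)$ is a weak divisor theory; because $\mathcal H(R)\subseteq\mathcal I^*(R)$, the very same elements witness condition (b) for $\partial$.

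For the third assertion, $\mathcal I^*(R)$ is weakly Krull by the second assertion, and it is Mori because its localizations at its minimal primes are, up to unit groups, the components $\mathcal I^*(R_\mathfrak p)\cong (R_\mathfrak p)^\bullet_{\red}$, each of which is strongly primary Mori, and a weakly Krull monoid all of whose localizations at minimal primes are Mori is itself Mori (\cite{Ge-HK06a}). By the uniqueness of weak divisor theories, $\mathcal C(\mathcal I^*(R))=\mathsf q(\mathcal I_v^*(R))/\mathsf q(\mathcal I^*(R))=\mathcal F_v(R)^\times/\mathcal F(R)^\times$, which coincides with $\mathcal C_v(\mathcal I^*(R))$ because $\mathcal I^*(R)$ is weakly Krull Mori. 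Since $\mathsf q(\mathcal H(R))\subseteq\mathcal F(R)^\times\subseteq\mathcal F_v(R)^\times$, the subgroup $\mathcal F(R)^\times$ identifies $\Pic(R)=\mathcal F(R)^\times/\mathsf q(\mathcal H(R))$ with a subgroup of $\mathcal C_v(R)=\mathcal F_v(R)^\times/\mathsf q(\mathcal H(R))$, and the third isomorphism theorem yields $\mathcal C_v(R)/\Pic(R)\cong\mathcal F_v(R)^\times/\mathcal F(R)^\times\cong\mathcal C_v(\mathcal I^*(R))$.

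For the last assertion, the key point is that $R^\bullet$ and $\mathcal I^*(R)$ are weakly Krull monoids sharing the same associated monoid $D=\mathcal I_v^*(R)=\coprod_{\mathfrak p\in\mathfrak X(R)}\mathcal I^*(R_\mathfrak p)$, hence the same indexing set, so the minimal primes of $\mathcal I^*(R)$ correspond canonically and bijectively to the components $\mathfrak p\in\mathfrak X(R)$; write $\mathfrak q_\mathfrak p\in\mathfrak X(\mathcal I^*(R))$ for the prime matching $\mathfrak p$. The two weakly Krull structures differ only by the subgroup one divides by, namely $\mathsf q(\mathcal H(R))\subseteq\mathsf q(\mathcal I^*(R))=\mathcal F(R)^\times$, and passing from the first to the second is precisely the canonical surjection $q\colon\mathcal C_v(R)\twoheadrightarrow\mathcal C_v(R)/\Pic(R)\cong\mathcal C_v(\mathcal I^*(R))$ of the third assertion. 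Consequently the set of classes of $\mathcal C_v(\mathcal I^*(R))$ to which $\mathfrak q_\mathfrak p$ belongs is the $q$-image of the set of classes of $\mathcal C_v(R)$ to which $\mathfrak p$ belongs; since $q$ is surjective and $\mathfrak p\mapsto\mathfrak q_\mathfrak p$ is injective, the hypothesis that every class of $\mathcal C_v(R)$ contains at least one (resp. infinitely many) $\mathfrak p\in\mathfrak X(R)$ transfers to $\mathcal C_v(\mathcal I^*(R))$. I expect the real work to lie not in any single calculation but in this bookkeeping: making the identification $\mathcal I_v^*(R)\cong\coprod_{\mathfrak p}\mathcal I^*(R_\mathfrak p)$ and the induced bijection $\mathfrak X(R)\leftrightarrow\mathfrak X(\mathcal I^*(R))$ precise, checking that ``lying in a prescribed class'' is compatible with $q$, and pinning down the reference that a weakly Krull monoid with Mori localizations at its minimal primes is Mori.
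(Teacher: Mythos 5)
Your proposal is correct and follows essentially the same route as the paper: the decomposition $\mathcal I_v^*(R)\cong\coprod_{\mathfrak p\in\mathfrak X(R)}(R_{\mathfrak p}^{\bullet})_{\red}$, the verification that $\mathcal I^*(R)\hookrightarrow\mathcal I_v^*(R)$ is a divisor homomorphism via integrality of $J^{-1}I$, inheritance of condition (b) from the weak divisor theory $\mathcal H(R)\hookrightarrow\mathcal I_v^*(R)$, the class group via the third isomorphism theorem, and the prime-distribution claim via surjectivity of $\mathcal C_v(R)\twoheadrightarrow\mathcal C_v(R)/\Pic(R)$. The one place you detour is the Mori property of $\mathcal I^*(R)$: rather than your localization argument (which needs the identification of $\mathfrak X(\mathcal I^*(R))$ with the components and the reference you flag), the paper gets it in one step from the fact that a monoid admitting a divisor homomorphism into a Mori monoid is itself Mori (\cite[Proposition 2.4.4]{Ge-HK06a}).
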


\begin{proof}
By \cite[Proposition 5.3]{Ge-Ka-Re15a}, we have a monoid isomorphism
\begin{equation} \label{structure1}
\mathcal I_v^* (R) \longrightarrow \coprod_{\mathfrak p \in \mathfrak X (R)} (R_{\mathfrak p}^{\bullet})_{\red} \,,
\end{equation}
whence $\mathcal I_v^* (R)$ is a reduced weakly factorial Mori monoid (the Mori property follows from Proposition \ref{2.1}.2).
Since $R$ is a weakly Krull Mori domain, the inclusion $\mathcal H (R) \hookrightarrow \mathcal I_v^* (R)$ is a weak divisor theory. To verify that the inclusion $\mathcal I^* (R) \hookrightarrow \mathcal I_v^* (R)$ is a divisor homomorphism, let $I, J \subset R$ be invertible ideals such that $I \mid J$ in $\mathcal I_v^* (R)$. Then $I^{-1} J \in \mathcal F (R)^{\times} \cap \mathcal I_v^* (R) \subset \mathcal F (R)^{\times} \cap \mathcal I (R)  = \mathcal I^* (R)$. Thus, the inclusion $\mathcal I^* (R) \hookrightarrow \mathcal I_v^* (R)$ is a divisor homomorphism. This implies that $\mathcal I^* (R)$ is a Mori monoid by \cite[Proposition 2.4.4]{Ge-HK06a}. Since  $\mathcal H (R) \hookrightarrow \mathcal I_v^* (R)$ is a weak divisor theory, every $I \in \mathcal I_v^* (R)$ is a strict greatest common divisor of principal ideals and hence a strict greatest common divisor of invertible ideals. Therefore, $\mathcal I^* (R) \hookrightarrow \mathcal I_v^* (R)$ is a weak divisor theory and for the class group we have
\[
\mathsf q ( \mathcal I_v^* (R) ) / \mathsf q ( \mathcal I^* (R) ) = \mathcal F_v (R)^{\times} / \mathcal F (R)^{\times} \cong \Big(  \mathcal F_v (R)^{\times} / \mathsf q ( \mathcal H (R) ) \Big) \Big/ \Big(  \mathcal F (R)^{\times} / \mathsf q ( \mathcal H (R) ) \Big) = \mathcal C_v (R) / \Pic (R) \,.
\]
The claim on the distribution of prime divisors $\mathfrak q \in \mathfrak X ( \mathcal I^* (R))$ follows immediately from the above isomorphisms.
\end{proof}

\smallskip
Let $D$ be a weakly Krull monoid. If $H \subset D$ is a submonoid such that $H \hookrightarrow D$ is a divisor homomorphism and the class group $\mathsf q (D)/D^{\times}\mathsf q (H)$ is torsion, then $H$ is a weakly Krull monoid by \cite[Lemma 5.1]{Ge-Ka-Re15a}. This abstract result applies to the setting $\mathcal H (R) \hookrightarrow \mathcal I^* (R) \hookrightarrow \mathcal I_v^* (R)$, provided that the respective class groups are torsion. But, we did not check the general case.

\smallskip
Let $R$ be a weakly Krull Mori domain. Many aspects of the  arithmetic of $\mathcal I_v^* (R)$ have been studied in a variety of settings, from orders in quadratic number fields to seminormal weakly Krull domains to stable weakly Krull domains (see \cite[Theorem 5.8]{Ge-Ka-Re15a}, \cite[Theorem 5.8]{Ge-Sc-Zh17b}, \cite[Corollary 4.6]{Ge-Zh18a}, \cite[Theorem 1.1]{Br-Ge-Re20}, \cite[Theorem 5.13]{Ge-Re19d}, \cite[Theorem 5.10]{Ba-Ge-Re21c}). The following corollary characterizes when  - under some additional assumptions - $\mathcal I_v^* (R)$ is  fully elastic (compare with Proposition \ref{4.9}.1). For the sake of completeness and in order to compare it with Theorem \ref{5.1}, we also recall two results on the structure of sets of lengths and their unions.

\smallskip
\begin{corollary} \label{4.4}
Let $R$ be a weakly Krull Mori domain with nonzero conductor $(R \DP \widehat R)$.
\begin{enumerate}
\item $\mathcal I_v^* (R)$ satisfies the Structure Theorem for Sets of Lengths.

\item Suppose that $\widehat {R_{\mathfrak p}}^{\times}/R_{\mathfrak p}^{\times}$ is a torsion group for all $\mathfrak p \in \mathfrak X (R)$ and that $\mathcal I_v^* (R)$ has finite elasticity. Then $\mathcal I_v^* (R)$ satisfies the Structure Theorem for Unions, and it is fully elastic if and only if there is $\mathfrak q \in \mathfrak X (R)$ such that $R_{\mathfrak q}$ is half-factorial. If $\mathcal I_v^* (R)$ is not fully elastic, then $R$ is a one-dimensional semilocal Mori domain with $\mathcal C_v (R) = \boldsymbol 0$.
\end{enumerate}
\end{corollary}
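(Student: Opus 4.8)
The plan is to lean on the structural description of $\mathcal I_v^* (R)$ supplied by Theorem \ref{4.3} and then feed it into Propositions \ref{3.2} and \ref{4.2}. First I would use the isomorphism \eqref{structure1} to write $\mathcal I_v^* (R) \cong \coprod_{\mathfrak p \in \mathfrak X (R)} (R_{\mathfrak p}^{\bullet})_{\red}$. Since the conductor $(R \DP \widehat R)$ is nonzero, only finitely many $\mathfrak p \in \mathfrak X (R)$ contain it, and for the others $R_{\mathfrak p} = \widehat{R_{\mathfrak p}}$ is a discrete valuation domain, so $(R_{\mathfrak p}^{\bullet})_{\red} \cong (\N_0,+)$; for the finitely many exceptional primes $\mathfrak p_1, \dots, \mathfrak p_n$ the localization $R_{\mathfrak p_i}$ is a one-dimensional local Mori domain with nonzero conductor, so $D_i := (R_{\mathfrak p_i}^{\bullet})_{\red}$ is locally tame strongly primary by \cite[Corollary 3.10]{Ge-Ro20a}. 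Thus $\mathcal I_v^* (R) \cong F \times D_1 \times \dots \times D_n$ with $F$ free abelian, and some $R_{\mathfrak q}$ ($\mathfrak q \in \mathfrak X (R)$) is half-factorial exactly when $F \neq \{1\}$ or some $D_i$ is half-factorial, i.e.\ exactly when one of the factors is half-factorial and not a group.

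For Part 1 and the structure theorem for unions in Part 2, I would use that in a finite product the set of lengths of an element is the sumset of the sets of lengths of its coordinates. Each $D_i$ satisfies the Structure Theorem for Sets of Lengths and the Structure Theorem for Unions by Proposition \ref{4.2}.2, and $F$ satisfies both trivially; finitely many summands with uniformly bounded parameters combine into AAMPs (resp., with the finite-elasticity hypothesis forcing the relevant $\mathcal U_k$ to stay bounded and eventually periodic, into AAPs), giving both assertions — these are in fact already recorded in \cite[Theorem 5.8]{Ge-Ka-Re15a} and \cite[Theorem 5.8]{Ge-Sc-Zh17b}, and the torsion hypothesis $\widehat{R_{\mathfrak p}}^{\times}/R_{\mathfrak p}^{\times}$ enters here to control the unions of the finitely primary factors.

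For the equivalence, I would first note that $\mathcal I_v^* (R)$ has accepted elasticity: its elasticity is $\max\{1,\rho(D_1),\dots,\rho(D_n)\}$, each $D_i$ is a divisor-closed submonoid (hence $\rho(D_i) \le \rho(\mathcal I_v^*(R)) < \infty$), and a finitely primary Mori monoid of finite elasticity has accepted elasticity, which then transfers to the factor realizing the maximum. If some $R_{\mathfrak q}$ is half-factorial, the decomposition above writes $\mathcal I_v^* (R) = H_1 \times H_2$ with $H_1$ half-factorial and not a group, which is Condition (a) of Proposition \ref{3.2}; accepted elasticity gives Condition (b), and hence $\mathcal I_v^* (R)$ is fully elastic. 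For the converse, suppose no $R_{\mathfrak q}$ with $\mathfrak q \in \mathfrak X(R)$ is half-factorial. As discrete valuation domains are half-factorial, this forces $\mathfrak X(R) = \{\mathfrak p_1,\dots,\mathfrak p_n\}$ to be finite and every $D_i$ non-half-factorial. By Proposition \ref{4.2}.1 there is $\beta \in \Q_{>1}$ with $\rho(L) \ge \beta$ for all $L \in \mathcal L(D_i)$, $i \in [1,n]$, with $\rho(L) \neq 1$; using moreover that non-half-factorial finitely primary monoids have only boundedly long singleton sets of lengths, a short mediant computation gives $\rho(L) \ge 1 + (\beta-1)\bigl(1 - c/\min L\bigr)$ for a fixed constant $c$ and all $L \in \mathcal L(\mathcal I_v^*(R))$ with $\rho(L) > 1$; consequently only finitely many $L$ satisfy $1 < \rho(L) \le 1 + (\beta-1)/2$, so the set of elasticities omits infinitely many rationals in $(1,\rho(\mathcal I_v^*(R)))$ and $\mathcal I_v^*(R)$ is not fully elastic.

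Finally, if $\mathcal I_v^*(R)$ is not fully elastic, the equivalence shows $\mathfrak X(R) = \{\mathfrak p_1,\dots,\mathfrak p_n\}$ is finite; writing $R = \bigcap_{i=1}^{n} R_{\mathfrak p_i}$, every nonzero non-unit of $R$ lies in some $\mathfrak p_i$, so by prime avoidance every nonzero prime of $R$ is contained in, hence equal to, some $\mathfrak p_i$, making $R$ a one-dimensional semilocal Mori domain; since $R$ is one-dimensional Mori, $\mathcal C_v(R) = \Pic(R)$ by \cite[Proposition 2.10.5]{Ge-HK06a}, and $\Pic(R) = \boldsymbol 0$ because $R$ is semilocal. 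The hard part is the converse direction of the equivalence: it requires understanding the set of elasticities of a finite product of non-half-factorial finitely primary monoids, which rests on the delicate facts that such monoids have bounded singleton sets of lengths and (for the other direction) accepted elasticity — precisely where the hypotheses "finite elasticity" and "$\widehat{R_{\mathfrak p}}^{\times}/R_{\mathfrak p}^{\times}$ torsion" are used — and the safest route is to quote the detailed analysis of sets of lengths of finitely primary Mori monoids.
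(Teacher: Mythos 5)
Your proposal is correct and follows essentially the same route as the paper: the decomposition $\mathcal I_v^*(R) \cong \mathcal F(\mathcal P) \times \coprod_{\mathfrak p \supset \mathfrak f}(R_{\mathfrak p}^{\bullet})_{\red}$, Proposition \ref{3.2} with accepted elasticity for the positive direction, and Proposition \ref{4.2}.1 combined with the bounded-singleton-lengths/mediant estimate for the negative one. The only deviations are cosmetic: the paper cites \cite[Theorem 7.4.3]{Ge-Zh20a} for Part 1 and \cite[Theorem 4.4]{Ge-Zh18a} plus \cite[Theorem 1.2]{Tr19a} for the Structure Theorem for Unions, bounds all non-trivial elasticities below by a single $\beta^*>1$ rather than excluding all but finitely many elasticity values near $1$, and obtains semilocality from $v$-$\spec(R)=\mathfrak X(R)$ rather than by prime avoidance.
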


\begin{proof}
1. See \cite[Theorem 7.4.3]{Ge-Zh20a}.

2. The additional assumptions imply that $\mathcal I_v^* (R)$ has accepted elasticity (\cite[Theorem 4.4.(ii)]{Ge-Zh18a}), whence it satisfies the Structure Theorem of Unions by \cite[Theorem 1.2]{Tr19a}.
We set $\mathfrak f = (R \DP \widehat R)$,  $\mathcal P^* = \{\mathfrak p \in \mathfrak X (R) \colon \mathfrak p \supset \mathfrak f \}$, $\mathcal P = \mathfrak X (R) \setminus \mathcal P^*$, and
\[
T = \coprod_{\mathfrak p \in \mathcal P^*} (R_{\mathfrak p}^{\bullet})_{\red} \,.
\]
By \eqref{structure1}, we  obtain that
\begin{equation} \label{structure2}
\mathcal I_v^* (R) \cong \coprod_{\mathfrak p \in \mathfrak X (R)} (R_{\mathfrak p}^{\bullet})_{\red} = \coprod_{\mathfrak p \in \mathcal P} (R_{\mathfrak p}^{\bullet})_{\red} \times T  \cong \mathcal F (\mathcal P) \times T \,.
\end{equation}
The localization  $R_{\mathfrak p}$ is a discrete valuation domain if and only if $\mathfrak p \in \mathcal P$. For all $\mathfrak p \in \mathcal P^*$,  $R_{\mathfrak p}^{\bullet}$ is a primary Mori domain, whence it is strongly primary.

(i) Let $\mathfrak q \in \mathfrak X (R)$ such that $R_{\mathfrak q}$ is half-factorial. Then
\[
\mathcal I_v^* (R) \cong \underbrace{(R_{\mathfrak q}^{\bullet})_{\red}}_{H_1} \times \underbrace{\coprod_{\mathfrak p \in \mathcal P \setminus \{\mathfrak q\}} (R_{\mathfrak p}^{\bullet})_{\red}}_{H_{2,1}} \times \underbrace{\coprod_{\mathfrak p \in \mathcal P^* \setminus \{\mathfrak q\}} (R_{\mathfrak p}^{\bullet})_{\red}}_{H_{2,2}} \,.
\]
If $\mathfrak X (R) = \{\mathfrak q\}$, then $\mathcal I_v^* (H)$ is half-factorial, whence it is fully elastic. Suppose that $\mathfrak X (R) \ne  \{\mathfrak q\}$. Then $H_2 := H_{2,1} \times H_{2,2}$ is a nontrivial monoid.
Since $H_{2,1}$ is free abelian, it has accepted elasticity. Since $\widehat {R_{\mathfrak p}}^{\times}/R_{\mathfrak p}^{\times}$ is a torsion group for all $\mathfrak p \in \mathfrak X (R)$ and since $\mathcal I_v^* (R)$ has finite elasticity, $(R_{\mathfrak p}^{\bullet})_{\red}$ has accepted elasticity by \cite[Lemma 4.1 and Theorem 4.4]{Ge-Zh18a} for all $\mathfrak p \in \mathcal P^* \setminus \{\mathfrak q\}$. Thus $H_2$ has accepted elasticity by \cite[Lemma 2.6]{Ge-Zh18a}.
Therefore,  $\mathcal I_v^* (R)$ is fully elastic by Proposition \ref{3.2}.

(ii) Suppose that $R_{\mathfrak q}$ is not half-factorial for all $\mathfrak q \in \mathfrak X (R)$. Then $\mathfrak X (R) = \mathcal P^*$ is finite, whence $\mathcal I_v^* (R) \cong T$ is a finite product of non-half-factorial strongly primary monoids, say $T = T_1 \times \ldots \times T_n$. Let $i \in [1,n]$. By Proposition \ref{4.2}, there is $\beta \in \Q_{>1}$ such that $\rho (L) \ge \beta$ for all $L \in \mathcal L (T_i)$ with $\rho (L) \ne 1$.
We set $\mathfrak m_i = T_i \setminus T_i^{\times}$ and we choose $a_i^* \in \mathfrak m_i$ with $|\mathsf L (a_i^*)|>1$. If $M = \max \{\mathcal M (a_1^*), \ldots, \mathcal M (a_n^*)\}$, then
\[
\mathfrak m_i^M \subset \mathfrak m_i^{\mathcal M (a_i^*)} \subset a_i^*T_i  \,.
\]
Now let $a \in T$ with $\rho ( \mathsf L (a) ) > 1$. Then $a = a_1 \cdot \ldots \cdot a_n$ and, after renumbering if necessary, we may assume that $\rho ( \mathsf L (a_i) ) > 1$ for all $i \in [1, m]$ and $\rho (\mathsf L (a_i) ) = 1$ for all $i \in [m+1, n]$ with $m \in [1,n]$. If $i \in [m+1,n]$, then $\rho (\mathsf L (a_i) ) = 1$ implies that $a_i \notin \mathfrak m_i^M$, whence $\mathsf L (a_i) = \{k_i\}$ for some $k_i \in [0, M-1]$.
Then
\[
\rho ( \mathsf L (a_1 \cdot \ldots \cdot a_m) ) = \frac{\max \mathsf L (a_1 \cdot \ldots \cdot a_m)}{\min \mathsf L (a_1 \cdot \ldots \cdot a_m)} = \frac {\max \mathsf L (a_1) + \ldots + \max \mathsf L (a_m)}{\min \mathsf L (a_1) + \ldots + \min \mathsf L (a_m)} \ge \beta \,,
\]
and
\[
\rho ( \mathsf L (a) ) = \frac{\max \mathsf L (a_1 \cdot \ldots \cdot a_m) + k_{m+1} + \ldots + k_n}{\min \mathsf L (a_1 \cdot \ldots \cdot a_m)+ k_{m+1} + \ldots + k_n} \,.
\]
Thus, there is $\beta^* \in \Q_{>1}$ such that $\rho ( \mathsf L (a) ) \ge \beta^*$ for all $a \in T$ with $\rho ( \mathsf L (a) ) > 1$, whence $T \cong \mathcal I_v^* (R)$ is not fully elastic.

(iii) Suppose that $\mathcal I_v^* (R)$ is not fully elastic. Since $v$-$\spec (R) = \mathfrak X (R)$ (\cite[Theorem 24.5]{HK98}), the equivalence of (i) and (ii) shows that $v$-$\spec (R)$ is finite. Thus, $v$-$\max (R) =\max (R)$ and $R$ is one-dimensional with $\mathcal C_v (R) = \boldsymbol 0$ by \cite[Propositions 2.10.4 and 2.10.5]{Ge-HK06a}.
\end{proof}

\smallskip
\subsection{Krull domains.} \label{4.b}
Let $H$ be a cancellative monoid. A {\it divisor theory} for $H$ is a weak divisor theory $\varphi \colon H \to D$, where $D$ is a free abelian monoid. The following statements are equivalent (\cite[Chapter 2.4]{Ge-HK06a}.
\begin{enumerate}
\item[(a)] $H$ is a Krull monoid (i.e., $H$ is a completely integrally closed Mori monoid).

\item[(b)] The map $\partial \colon H \to \mathcal I_v^* (H)$, defined by $a \mapsto aH$ for all $a \in H$, is a divisor theory.

\item[(c)] $H$ has a divisor theory $\varphi \colon H \to \mathcal F (P)$.

\item[(d)] There is a divisor homomorphism $\varphi \colon H \to D$, where $D$ is a factorial monoid.
\end{enumerate}
Let $H$ be a Krull monoid. Then there is a free abelian monoid $\mathcal F (P)$ such that the inclusion $H_{\red} \hookrightarrow \mathcal F (P)$ is a divisor theory. Then
\[
\mathcal C (H) = \mathsf q  (\mathcal F (P))/ \mathsf q (H_{\red})
\]
is called the {\it (divisor) class group} of $H$ and $G_P = \{[p] = p \mathsf q (H_{\red}) \colon p \in P \} \subset \mathcal C (H)$ is the set of classes containing prime divisors. By the uniqueness of divisor theories, $\mathcal C (H)$ and the set $G_P \subset \mathcal C (H)$ depend on $H$ only. In particular, $\mathcal C (H)$ is isomorphic to $\mathcal C_v (H)$.

\smallskip
\begin{theorem} \label{4.5}
Let $R$ be a  domain and $r$ be an ideal system on $R$ with $\mathcal I_r (R) \subset \mathcal I (R)$. Then the following statements are equivalent.
\begin{enumerate}
\item[(a)] $R$ is a Krull domain.

\item[(b)] The monoid of principal ideals $\mathcal H (R)$ is a Krull monoid.

\item[(c)] The monoid of invertible ideals $\mathcal I^* (R)$ is a Krull monoid.

\item[(d)] The monoid of $r$-invertible $r$-ideals $\mathcal I_r^* (R)$ is a Krull monoid.

\item[(e)] The monoid of $v$-invertible $v$-ideals $\mathcal I_v^* (R)$ is a Krull monoid.
\end{enumerate}
If these conditions hold, then the inclusion $\mathcal I_r^* (R) \hookrightarrow \mathcal I_v^* (R)$ is a divisor theory with class group $\mathcal C ( \mathcal I_r^* (R))$ being isomorphic to $\mathcal C_v (R)/\mathcal C_r (R)$.
Moreover, if every class of $\mathcal C_v (R)$ contains at least one prime divisor resp. infinitely many prime divisors, then the same is true for $\mathcal C (\mathcal I_r^* (R) )$.
\end{theorem}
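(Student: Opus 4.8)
The plan is to deduce the whole statement from one elementary observation together with the standard theory of Krull monoids and divisor theories recalled above. First I would record the \emph{key observation}: for every ideal system $q$ on $R$ with $\mathcal I_q(R) \subseteq \mathcal I(R)$ --- in particular for $q = d$, $q = r$ and $q = v$ --- and for $I, J \in \mathcal I_q^*(R)$, divisibility in the monoid $\mathcal I_q^*(R)$ is nothing but reverse containment: $I \mid J$ if and only if $J \subseteq I$; and likewise in $\mathcal H(R)$. Here ``$\Rightarrow$'' is immediate since $I \cdot_q K \subseteq I$ for integral $K$, and for ``$\Leftarrow$'' one checks that $K := I^{-1} \cdot_q J$, with $I^{-1} = (R \DP I)$ the inverse of $I$ in the group $\mathcal F_q(R)^{\times}$, is a $q$-invertible $q$-ideal, is integral because $I^{-1}J \subseteq (I^{-1}I)_q = R$, and satisfies $I \cdot_q K = J$. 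It follows that each inclusion in the chain $\mathcal H(R) \hookrightarrow \mathcal I^*(R) \hookrightarrow \mathcal I_r^*(R) \hookrightarrow \mathcal I_v^*(R)$ is a divisor homomorphism, since divisibility in both source and target amounts to reverse inclusion of ideals; I would also record the routine facts that all four monoids are cancellative and reduced and that $\mathsf q(\mathcal I_q^*(R)) = \mathcal F_q(R)^{\times}$.

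For the equivalences, I would argue as follows. Since $\mathcal H(R) \cong R^{\bullet}_{\red}$ and the Krull property passes between a cancellative monoid and its reduction, (a) $\Leftrightarrow$ (b). Next, $R$ being a Krull domain is exactly the statement that the divisor homomorphism $\mathcal H(R) \hookrightarrow \mathcal I_v^*(R)$ is a divisor theory, so (b) implies that $\mathcal I_v^*(R)$ is free abelian with basis $\mathfrak X(R)$, in particular factorial, giving (e). Conversely, if $\mathcal I_v^*(R)$ is a Krull monoid it admits a divisor homomorphism into a factorial monoid; composing with the divisor homomorphisms of the key observation shows that each of $\mathcal I_r^*(R)$, $\mathcal I^*(R)$ and $\mathcal H(R)$ admits a divisor homomorphism into a factorial monoid, hence is a Krull monoid by condition (d) of the list preceding the theorem. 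This yields (e) $\Rightarrow$ (d) $\Rightarrow$ (c) $\Rightarrow$ (b), closing the circle with (a) $\Leftrightarrow$ (b) $\Rightarrow$ (e).

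For the ``moreover'' part I would assume the equivalent conditions hold. The inclusion $\mathcal I_r^*(R) \hookrightarrow \mathcal I_v^*(R)$ is a divisor homomorphism into the free abelian monoid $\mathcal I_v^*(R)$; to see it is a divisor theory it only remains to note that every $I \in \mathcal I_v^*(R)$ is a strict greatest common divisor of finitely many elements of $\mathcal I_r^*(R)$, which is inherited from the divisor theory $\mathcal H(R) \hookrightarrow \mathcal I_v^*(R)$ by writing $I$ as a strict gcd of finitely many principal ideals (these lying in $\mathcal I_r^*(R)$). For the class group, $\mathsf q(\mathcal I_v^*(R)) = \mathcal F_v(R)^{\times}$ and $\mathsf q(\mathcal I_r^*(R)) = \mathcal F_r(R)^{\times}$ give $\mathcal C(\mathcal I_r^*(R)) = \mathcal F_v(R)^{\times}/\mathcal F_r(R)^{\times}$, which by the third isomorphism theorem applied to $\mathsf q(\mathcal H(R)) \subseteq \mathcal F_r(R)^{\times} \subseteq \mathcal F_v(R)^{\times}$ is isomorphic to $\mathcal C_v(R)/\mathcal C_r(R)$, exactly as in the proof of Theorem \ref{4.3}. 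Finally, the classes of $\mathcal C(\mathcal I_r^*(R))$ containing prime divisors are the images of $\mathfrak X(R)$ under the canonical epimorphism $\pi \colon \mathcal C_v(R) \to \mathcal C(\mathcal I_r^*(R))$ induced by this inclusion, and $\pi$ carries the $\mathcal C_v(R)$-class of a $\mathfrak p \in \mathfrak X(R)$ onto its $\mathcal C(\mathcal I_r^*(R))$-class; so for a class $c$ of $\mathcal C(\mathcal I_r^*(R))$ one picks $c' \in \pi^{-1}(c)$, and by hypothesis $c'$ contains at least one (resp. infinitely many) $\mathfrak p \in \mathfrak X(R)$, all of which map into $c$.

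The whole argument is an assembly of standard facts, so I do not expect a serious obstacle; the one idea carrying it is the key observation that divisibility in each of these ideal monoids is merely reverse containment, which makes the divisor-homomorphism property of the chain transparent and uniform in $r$. The points needing a little care are that the abstract criteria genuinely apply: cancellativity of the monoids, that $\mathcal I_v^*(R)$ is really free abelian when $R$ is Krull, and that the strict-gcd condition transfers from principal ideals up to $r$-invertible $r$-ideals.
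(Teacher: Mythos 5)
Your proposal is correct and follows essentially the same route as the paper: the same chain of divisor homomorphisms $\mathcal H (R) \hookrightarrow \mathcal I^* (R) \hookrightarrow \mathcal I_r^* (R) \hookrightarrow \mathcal I_v^* (R)$, the pullback of the Krull property along divisor homomorphisms into a factorial monoid, the transfer of the (strict) gcd condition from principal ideals to $r$-invertible ideals, and the third isomorphism theorem for the class group. Your only cosmetic difference is packaging the paper's assertions {\bf A1} and {\bf A2} into the single uniform observation that divisibility in each of these monoids is reverse containment, which is a clean and valid way to get the same facts.
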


\noindent
{\it Remark.} Clearly, there is a redundancy in the above formulation. If $r = d$ is the system of usual ideals, then $\mathcal I_r^* (R) = \mathcal I^* (R)$ and if $r=v$ is the system of divisorial ideals, then $\mathcal I_r^* (R) = \mathcal I_v^* (R)$. But, we want to emphasize these two important special cases.

\begin{proof}
A monoid $H$ is Krull if and only if the associated reduced monoid $H_{\red}$ is Krull and, clearly, $\mathcal H (R) \cong (R^{\bullet})_{\red}$. Thus (a) and (b) are equivalent by  \cite[Chapter 2]{Ge-HK06a}. Therefore, it remains to verify that (a) and (b) are equivalent to (d).
We have
\[
\mathsf q ( \mathcal I_r^* (R) ) = \mathcal F_r (R)^{\times} \quad \text{and} \quad \mathsf q ( \mathcal I_v^* (R) ) = \mathcal F_v (R)^{\times} \,.
\]
By Equation \eqref{inclusion1}, $\mathcal F_r (R)^{\times}$ is a subgroup of $\mathcal F_v (R)^{\times}$ and, in particular, $r$-ideal multiplication in $\mathcal F_r (R)^{\times}$ coincides with the $v$-multiplication. We continue with three assertions.

\begin{enumerate}
\item[{\bf A1.}\,] $\mathcal H (R) \hookrightarrow \mathcal I_r^* (R)$ is a  divisor homomorphism.

\item[{\bf A2.}\,] $\mathcal I_r^* (R) \hookrightarrow \mathcal I_v^* (R)$ is a  divisor homomorphism.

\item[{\bf A3.}\,] If $D$ is a Krull monoid and $H \hookrightarrow D$ is a divisor homomorphism, then $H$ is a Krull monoid.
\end{enumerate}

\noindent
{\it Proof of} \,{\bf A1}.\, If $a, b \in R^{\bullet}$ such that $aR \mid bR$ in $\mathcal I_r^* (R)$, then $bR = aR \cdot_r I$ for some $I \in \mathcal I_r^* (R)$, and $a^{-1}bR = I \subset R$ implies that $a \mid b$ in $R$.

\noindent
{\it Proof of} \,{\bf A2}.\, Let $I, J \in \mathcal I_r^* (R) $ such that $I$ divides $J$ in $\mathcal I_v^* (R)$. Then $I^{-1} \cdot_r J \in \mathcal F_r (R)^{\times} \cap \mathcal I_v^* (R) \subset \mathcal F_r (R)^{\times} \cap \mathcal I (R)  = \mathcal I_r^* (R)$. Thus, the inclusion $\mathcal I_r^* (R) \hookrightarrow \mathcal I_v^* (R)$ is a divisor homomorphism.

\noindent
{\it Proof of} \,{\bf A3}.\, If $D$ is Krull, then there is a divisor homomorphism $\varphi \colon D \to F$, where $F$ is a factorial monoid. Since the composition of divisor homomorphisms is a divisor homomorphism again, we obtain a divisor homomorphism $H \hookrightarrow D \to F$ from $H$ to a factorial monoid, whence $H$ is a Krull monoid.

Suppose that  (a) and (b) hold. Then $\mathcal I_v^* (R)$ is free abelian by \cite[Theorem 2.3.11]{Ge-HK06a} and hence Krull. Thus, $\mathcal I_r^* (R)$ is Krull by {\bf A2} and {\bf A3}, which means that (d) holds. Conversely, if (d) holds, then $\mathcal H (R)$ is Krull by {\bf A1} and {\bf A3}.

\smallskip
Now suppose that (a) -- (e) hold.
Since the inclusion $\mathcal H (R) \hookrightarrow \mathcal I_v^* (R)$ is a divisor theory, every $I \in \mathcal I_v^* (R)$ is a greatest common divisor of principal ideals and hence a greatest common divisor of $r$-invertible $r$-ideals. This, together with {\bf A2}, shows that the inclusion $\mathcal I_r^* (R) \hookrightarrow \mathcal I_v^* (R)$ is a divisor theory with class group
\[
\mathsf q ( \mathcal I_v^* (R) ) / \mathsf q ( \mathcal I_r^* (R) ) = \mathcal F_v (R)^{\times} / \mathcal F_r (R)^{\times} \cong \Big(  \mathcal F_v (R)^{\times} / \mathsf q ( \mathcal H (R) ) \Big) \Big/ \Big(  \mathcal F_r (R)^{\times} / \mathsf q ( \mathcal H (R) ) \Big) = \mathcal C_v (R) / \mathcal C_r (R) \,.
\]
The monoid $\mathcal I_v^* (R)$ is free abelian with basis $v$-$\spec (R)$. If every class of $\mathcal C_v (R)$ contains at least one resp. infinitely many prime $v$-ideals, then the same is true for the factor group $\mathcal C_v (R) / \mathcal C_r (R)$.
\end{proof}

Thus, if $R$ is a Krull domain, then the monoid $\mathcal I^* (R)$ of invertible ideals is a Krull monoid with class group isomorphic to $\mathcal C_v (R)/\Pic (R)$.  This factor group  shows a wide range of behavior. Daniel D. Anderson gave a characterization when the factor group   is a torsion group  and he showed that the factor group is trivial if and only if $R_{\mathfrak m}$ is factorial for all $\mathfrak m \in \max (R)$ (\cite[Theorems 3.1 and 3.3]{An82}; see also \cite{An88c}).
We consider monoid algebras that are Krull. For a domain $D$ and  a cancellative monoid $S$, the monoid algebra $D[S]$ has the following properties:
\begin{itemize}
\item $D[S]$ is Krull if and only if $D$ is Krull, $S$ is Krull, $\mathsf q (S)$ is torsion-free, and $S^{\times}$ satisfies the ACC on cyclic subgroups (this was first proved by Chouinard in \cite{Ch81}; see also \cite[Theorem 15.6]{Gi84}).

\item $D[S]$ is seminormal if and only if $D$ and $S$ are seminormal (\cite[Theorem 4.76]{Br-Gu09a}).
\end{itemize}

\smallskip
\begin{corollary} \label{4.6}
Let $D$ be a domain and $S$ be a cancellative monoid such that the monoid algebra $R = D[S]$ is Krull. Then $\mathcal I^* (R)$ is a Krull monoid with class group $\mathcal C ( \mathcal I^* (R)) \cong \mathcal C_v (R) / \Pic (R)$, $\mathcal C_v (R) \cong \mathcal C_v (D) \oplus \mathcal C_v (S)$, $\Pic (R) \cong \Pic (D)$, and every class of $\mathcal C_v ( \mathcal I^* (R))$ contains infinitely many prime divisors.
\end{corollary}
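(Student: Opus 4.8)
The plan is to obtain Corollary~\ref{4.6} from Theorem~\ref{4.5} together with standard structural facts about monoid algebras, isolating the one arithmetically substantive point. Since $R=D[S]$ is a Krull domain by hypothesis, Theorem~\ref{4.5} applied with $r=d$ immediately gives that $\mathcal I^*(R)$ is a Krull monoid and that $\mathcal C(\mathcal I^*(R))\cong\mathcal C_v(\mathcal I^*(R))\cong\mathcal C_v(R)/\mathcal C_d(R)=\mathcal C_v(R)/\Pic(R)$; moreover its last assertion reduces the claim that every class of $\mathcal C_v(\mathcal I^*(R))$ contains infinitely many prime divisors to the statement that every class of $\mathcal C_v(R)$ contains infinitely many height-one primes of $R$. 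Thus three things remain to be supplied: the identifications $\mathcal C_v(R)\cong\mathcal C_v(D)\oplus\mathcal C_v(S)$ and $\Pic(R)\cong\Pic(D)$, and the realization statement for $\mathcal C_v(R)$.

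The first identification is Chouinard's computation of the divisor class group of a Krull monoid algebra (\cite{Ch81}; see also \cite[Chapter~15]{Gi84}): under it the summand $\mathcal C_v(D)$ is generated by the classes $[\mathfrak pR]$ with $\mathfrak p\in\mathfrak X(D)$, and the summand $\mathcal C_v(S)$ by the classes of those height-one primes of $R$ lying over height-one primes of $S$. For the Picard group, $R$ being Krull forces $D$ to be Krull, hence seminormal, and likewise $S$ is seminormal; therefore $R=D[S]$ is seminormal by the criterion recalled above (\cite[Theorem~4.76]{Br-Gu09a}), and the monoid-algebra form of the Traverso--Swan theorem then yields $\Pic(D[S])=\Pic(D)$ (\cite{Br-Gu09a}).

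It remains to show that every class of $\mathcal C_v(R)$ contains infinitely many height-one primes of $R$; here one should assume $S\ne\{1\}$, so that $R$ properly contains $D$ and $\mathsf q(S)$ has an element of infinite order, and this is where the arithmetic of $D[S]$ genuinely enters. The mechanism I would use is the production of ``uppers to zero''. With $K=\mathsf q(D)$ and $T=D^{\bullet}$ one has $R_T=K[S]$, and Nagata's theorem gives $\mathcal C_v(R)$ modulo the subgroup $\langle[\mathfrak pR]:\mathfrak p\in\mathfrak X(D)\rangle$ isomorphic to $\mathcal C_v(K[S])\cong\mathcal C_v(S)$; hence the $\mathcal C_v(D)$-component of a class is detected by contents over $D$ and the $\mathcal C_v(S)$-component by the image in $\mathcal C_v(K[S])$. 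To reach the $\mathcal C_v(D)$-component one represents it by an integral divisorial ideal $\mathfrak a$ of $D$; divisorial ideals of a Krull domain are $v$-generated by two elements (approximation theorem), so $\mathfrak a=(a,b)_v$, and then for the monomial $t\in R$ attached to an element of infinite order of $S$ the elements $at+(b+ca)$ with $c\in D$, all of $D$-content $(a,b+ca)_v=\mathfrak a$, should determine pairwise distinct height-one primes of $R$ in the prescribed class; one invokes here the Gauss/Dedekind--Mertens content formula valid over Krull domains. The $\mathcal C_v(S)$-component is handled symmetrically, over the field $K$ and through a divisor theory of the Krull monoid $S$. In the polynomial case $S=\N_0$, i.e. $R=D[X]$, this argument is clean and complete.

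The main obstacle is carrying out the realization step for an \emph{arbitrary} $S$: one must know precisely how the height-one primes of $D[S]$ lie over those of $D$ and of $S$, must verify that the ``uppers'' above really are height-one primes of $R$ (and not merely of a polynomial subalgebra), and must keep track of the divisor classes they occupy. Rather than doing this bookkeeping by hand I would deduce the realization statement from the explicit description of the height-one primes of Krull semigroup rings in \cite{Gi84} (or \cite{Ch81}), or else adapt the weakly Krull Mori analysis of \cite{Fa-Wi22c} to the present (simpler) Krull situation. Once the realization statement is in place, Theorem~\ref{4.5} finishes the proof with no further computation.
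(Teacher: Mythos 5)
Your skeleton is the paper's: everything is funnelled through Theorem~\ref{4.5}, the decomposition $\mathcal C_v(R)\cong\mathcal C_v(D)\oplus\mathcal C_v(S)$ is quoted from Chouinard/Gilmer (the paper cites \cite[Corollary 16.8]{Gi84}), and the Picard group identification is a citation (the paper uses \cite[Corollary 1]{An88b}, noting that $D$, being completely integrally closed, is strongly quasinormal; your route via seminormality of $D$ and $S$, seminormality of $D[S]$ from \cite[Theorem 4.76]{Br-Gu09a}, and the monoid-algebra Traverso--Swan theorem is an acceptable variant). The one substantive divergence is the realization statement that every class of $\mathcal C_v(R)$ contains infinitely many height-one primes, which you correctly isolate as the only real content. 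The paper disposes of it with a single citation to \cite{Fa-Wi22b} (``On the distribution of prime divisors in Krull monoid algebras''), which is precisely this statement; you did not locate that reference and instead sketch an uppers-to-zero argument. Your sketch is the standard Nagata/Fossum argument and is correct and essentially complete for $S=\N_0^n$, i.e.\ for polynomial rings: the upper to zero attached to $at+(b+ca)$ lies in the class $-[\mathfrak a]$ with $\mathfrak a=(a,b)_v=(a,b+ca)_v$, and distinct $c$ give non-associated linear polynomials, hence distinct primes. For arbitrary $S$, however, the fallback references you name do not close the gap: \cite{Gi84} and \cite{Ch81} compute $\mathcal C_v(D[S])$ and describe its height-one primes but do not prove the distribution statement, and \cite{Fa-Wi22c} treats the weakly Krull setting. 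So as written your proof of the last assertion is complete only in the polynomial case; for general $S$ you must either invoke \cite{Fa-Wi22b} or actually carry out the bookkeeping you defer (tracking uppers through a divisor theory of $S$), which is exactly what Fadinger and Windisch do. Your caveat that $S$ should be nontrivial is a fair observation: for $S=\{1\}$ and $D$ a discrete valuation domain the trivial class of $\mathcal C_v(\mathcal I^*(R))$ contains a single prime divisor, so some such hypothesis is implicitly in force.
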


\begin{proof}
Since $D[S]$ is Krull, the previous remark implies that $D$ is a Krull domain and $S$ is a Krull monoid, whence $D$ and $S$ are seminormal. Thus, the natural map
\[
\Pic (D[S]) \longrightarrow \Pic (D)
\]
is an isomorphism by \cite[Corollary 1]{An88b} (note, since $D$ is completely integrally closed, $D$ is strongly quasinormal in the sense of \cite{An88b}).
We have $\mathcal C_v ( R) \cong \mathcal C_v (D) \oplus \mathcal C_v (S)$ by \cite[Corollary 16.8]{Gi84}.
Since every class of $\mathcal C_v (R)$ contains infinitely many prime divisors by \cite[Theorem]{Fa-Wi22b}, the same is true for the factor group $\mathcal C_v ( \mathcal I^* (R))$ by Theorem \ref{4.5}.
\end{proof}

Most arithmetical results, valid for weakly Krull Mori monoids $H$, are established under the additional assumption that $H$ has nonempty conductor $(H \DP \widehat H)$ to its complete integral closure. Now let $R$ be a weakly Krull Mori domain. We already know that $\mathcal I^* (R)$ is a weakly Krull Mori monoid. The next corollary shows that, if $R$ has nonzero conductor $(R \DP \widehat R)$, then also $\mathcal I^* (R)$ has nonempty conductor, whence all arithmetical results, valid for weakly Krull Mori monoids with nonempty conductor, also apply to $\mathcal I^* (R)$.

\smallskip
\begin{corollary} \label{4.7}
Let $R$ be a weakly Krull Mori domain with $(R \DP \widehat R) \ne \{0\}$. Then $\mathcal I_v^* (R)$ and $\mathcal I^* (R)$ are weakly Krull Mori monoids with  $( \mathcal I_v^* (R) \DP \widehat{\mathcal I_v^* (R)}) \ne \emptyset$ and with $( \mathcal I^* (R) \DP \widehat{\mathcal I^* (R)}) \ne \emptyset$.
\end{corollary}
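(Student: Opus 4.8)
The plan is to exhibit one explicit element lying in both conductors, namely the principal ideal $dR$ for a fixed $0 \ne d \in \mathfrak f := (R \DP \widehat R)$. By Theorem \ref{4.3} we already know that $\mathcal I_v^* (R)$ and $\mathcal I^* (R)$ are weakly Krull Mori monoids, that $\mathcal H (R) \subseteq \mathcal I^* (R) \subseteq \mathcal I_v^* (R)$ are submonoids, and that \eqref{structure1} gives an isomorphism $\mathcal I_v^* (R) \cong \coprod_{\mathfrak p \in \mathfrak X (R)} (R_{\mathfrak p}^{\bullet})_{\red}$ under which $dR \in \mathcal H (R)$ corresponds to the tuple $(d R_{\mathfrak p})_{\mathfrak p}$; this is a legitimate element of the coproduct because $d$ is a unit in $R_{\mathfrak p}$ for all but finitely many $\mathfrak p \in \mathfrak X (R)$, by the finite character of the weakly Krull monoid $R^{\bullet}$.

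The key local input I would establish is that, writing $(\widehat R)_{\mathfrak p} := (R \setminus \mathfrak p)^{-1} \widehat R$, one has $\widehat{R_{\mathfrak p}} = (\widehat R)_{\mathfrak p}$ for every $\mathfrak p \in \mathfrak X (R)$, and hence $d\, \widehat{R_{\mathfrak p}} = d\, (\widehat R)_{\mathfrak p} \subseteq R_{\mathfrak p}$. Indeed, since $R$ is a Mori domain with $\mathfrak f \ne \{0\}$, the ring $\widehat R$ is Krull (this is classical), so the localization $(\widehat R)_{\mathfrak p}$ is Krull and therefore completely integrally closed; together with the always-valid inclusions $R_{\mathfrak p} \subseteq (\widehat R)_{\mathfrak p} \subseteq \widehat{R_{\mathfrak p}}$ and the monotonicity $\widehat{R_{\mathfrak p}} \subseteq \widehat{(\widehat R)_{\mathfrak p}} = (\widehat R)_{\mathfrak p}$ this forces the claimed equality. (Alternatively one may quote the structure theory of weakly Krull Mori domains with nonzero conductor.) A short check from the definition shows that the complete integral closure commutes with coproducts of reduced monoids, so $\widehat{\mathcal I_v^* (R)}$ is described componentwise by the monoids $\widehat{(R_{\mathfrak p}^{\bullet})_{\red}}$, whose elements are the classes of the nonzero elements of $\widehat{R_{\mathfrak p}}$.

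With this in hand, for any element $J = (x_{\mathfrak p})_{\mathfrak p}$ of $\widehat{\mathcal I_v^* (R)}$ (with $x_{\mathfrak p}$ a nonzero element of $\widehat{R_{\mathfrak p}}$) the product $(d R_{\mathfrak p})_{\mathfrak p}\, J$, formed in $\mathsf q (\mathcal I_v^* (R))$, has $\mathfrak p$-th component given by $d x_{\mathfrak p} \in R_{\mathfrak p}$, so it again lies in $\mathcal I_v^* (R)$; hence $dR \in (\mathcal I_v^* (R) \DP \widehat{\mathcal I_v^* (R)})$, and this conductor is nonempty. For $\mathcal I^* (R)$ I would deduce the claim from the $v$-case: from $\mathcal I^* (R) \subseteq \mathcal I_v^* (R)$ one obtains $\widehat{\mathcal I^* (R)} \subseteq \widehat{\mathcal I_v^* (R)}$, and $\mathsf q (\mathcal I^* (R)) = \mathcal F (R)^{\times}$ consists of invertible fractional ideals; so for $J \in \widehat{\mathcal I^* (R)}$ the $v$-product $dR \cdot_v J$ lies in $\mathcal I_v^* (R)$ by the previous step, hence is an integral ideal, and it is invertible, being a product of the invertible fractional ideals $dR$ and $J$. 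Thus $dR \cdot_v J \in \mathcal I^* (R)$, which gives $dR \in (\mathcal I^* (R) \DP \widehat{\mathcal I^* (R)}) \ne \emptyset$.

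I expect the main obstacle to be the local identity $\widehat{R_{\mathfrak p}} = (\widehat R)_{\mathfrak p}$, that is, the fact that forming the complete integral closure commutes with localization at a height-one prime of $R$; this is precisely where the hypothesis $\mathfrak f \ne \{0\}$ enters (through $\widehat R$ being Krull), and it needs some care since the complete integral closure is not compatible with localization in general. A secondary point to watch is that $\widehat{(R_{\mathfrak p}^{\bullet})_{\red}}$ need not be reduced, but this does not affect the componentwise computation. Everything else -- the coproduct description of $\widehat{\mathcal I_v^* (R)}$ and the two membership verifications for $dR$ -- is routine bookkeeping.
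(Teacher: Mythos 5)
Your argument is correct, and its overall skeleton matches the paper's: reduce the statement for $\mathcal I_v^* (R)$ to a componentwise statement via the coproduct decomposition \eqref{structure1}, and then transfer the result to $\mathcal I^* (R)$ by exhibiting a \emph{principal} (hence invertible) conductor element and using that an integral, invertible, $v$-invertible $v$-ideal already lies in $\mathcal I^* (R)$ (i.e.\ $\mathcal F (R)^{\times} \cap \mathcal I_v^* (R) = \mathcal I^* (R)$). Where you genuinely diverge is in the key local step. The paper splits $\mathcal I_v^* (R) \cong \mathcal F (\mathcal P) \times \coprod_{\mathfrak p \in \mathcal P^*} (R_{\mathfrak p}^{\bullet})_{\red}$ with $\mathcal P^*$ the finite set of height-one primes over the conductor, and then quotes two abstract monoid facts: conductors of finite products are nonempty if those of the factors are, and nonemptiness of the conductor passes to localizations $S_{\mathfrak p}$. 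You instead produce an explicit element, $dR$ for $0 \ne d \in (R \DP \widehat R)$, and verify the local containment $d\,\widehat{R_{\mathfrak p}} \subset R_{\mathfrak p}$ by proving $\widehat{R_{\mathfrak p}} = (\widehat R)_{\mathfrak p}$, using that $\widehat R$ is Krull (a classical consequence of $R$ being Mori with nonzero conductor) together with the sandwich $R_{\mathfrak p} \subset (\widehat R)_{\mathfrak p} \subset \widehat{R_{\mathfrak p}} \subset \widehat{(\widehat R)_{\mathfrak p}} = (\widehat R)_{\mathfrak p}$. This is a legitimate and in some ways preferable route: it makes the conductor element explicit from the start (so the second half of the proof needs no extra multiplication to reach a principal ideal, as the paper does via $I \cdot_v J = aR$), and it supplies an actual proof of the localization step that the paper leaves as an unproved ``simple fact.'' The price is that you invoke the Krull property of $\widehat R$ and work with the full (infinite) coproduct, for which your observation that complete integral closure commutes with coproducts (elements of the quotient group having finite support) is indeed needed and is correct. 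Two small points to keep straight in a write-up: the multiplications on $\mathcal I^* (R)$ and $\mathcal I_v^* (R)$ agree on invertible ideals because products of invertible fractional ideals are divisorial, which is what makes $\widehat{\mathcal I^* (R)} \subset \widehat{\mathcal I_v^* (R)}$ and the final identification $dR \cdot_v J = dR \cdot J$ legitimate; and $\widehat{(R_{\mathfrak p}^{\bullet})_{\red}} = \widehat{R_{\mathfrak p}^{\bullet}}/R_{\mathfrak p}^{\times}$, so the componentwise description you use is the right one.
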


\begin{proof}
$\mathcal I_v^* (R)$ and $\mathcal I^* (R)$ are weakly Krull Mori monoids by Theorem \ref{4.3}, whence it remains to prove the statements on the conductor.

(i) To prove the claim on $\mathcal I_v^* (R)$, we use the same notation as in the proof of Corollary \ref{4.4}. Thus, we set $\mathfrak f = (R \DP \widehat R)$,  $\mathcal P^* = \{\mathfrak p \in \mathfrak X (R) \colon \mathfrak p \supset \mathfrak f \}$, $\mathcal P = \mathfrak X (R) \setminus \mathcal P^*$, and by Equation \eqref{structure2} we have
\begin{equation} \label{structure3}
\mathcal I_v^* (R) \cong \coprod_{\mathfrak p \in \mathfrak X (R)} (R_{\mathfrak p}^{\bullet})_{\red}  \cong \mathcal F (\mathcal P) \times \coprod_{\mathfrak p \in \mathcal P^*} (R_{\mathfrak p}^{\bullet})_{\red} \,.
\end{equation}
Note that $\mathcal P^*$ is finite. We use the following simple facts on the complete integral closure of cancellative monoids $S_1, S_2$, and $S$.
\begin{itemize}
\item[(a)] If $S = S_1 \times S_2$, then $\widehat S = \widehat{S_1} \times \widehat{S_2}$. Thus, if $(S_i \DP \widehat{S_i}) \ne \emptyset$ for all $i \in [1,2]$, then $(S \DP \widehat S) \ne \emptyset$.

\item[(b)] If $(S \DP \widehat S) \ne \emptyset$ and $\mathfrak p \in \mathfrak X (S) \ne \emptyset$, then $(S_{\mathfrak p} \DP \widehat{S_{\mathfrak p}}) \ne \emptyset$.
\end{itemize}
Since $(R \DP \widehat R) \ne \{0\}$, it follows that $(R_{\mathfrak p}^{\bullet} \DP \widehat{R_{\mathfrak p}^{\bullet}}) \ne \emptyset$ for all $\mathfrak p \in \mathcal P^*$. Thus, by the isomorphism in \eqref{structure3} and by Property (a), it follows that $( \mathcal I_v^* (R) \DP \widehat{\mathcal I_v^* (R)}) \ne \emptyset$.

(ii) Next we show that $( \mathcal I^* (R) \DP \widehat{\mathcal I^* (R)}) \ne \emptyset$. By Theorem \ref{4.5}, the inclusion $\mathcal I^* (R) \hookrightarrow \mathcal I_v^* (R)$ is a divisor theory, whence $\mathcal I_v^* (R) \cap \mathsf q \big( \mathcal I^* (R) \big) = \mathcal I^* (R)$. By (i), there is
 $I \in ( \mathcal I_v^* (R) \DP \widehat{\mathcal I_v^* (R)}) \ne \emptyset$. Then there is $J \in \mathcal I_v^* (R)$ and $a \in R$ such that $I \cdot_v J = (IJ)_v = aR \in ( \mathcal I_v^* (R) \DP \widehat{\mathcal I_v^* (R)})$. Then
\[
(aR) \widehat{\mathcal I^* (R)} \subset (aR)\widehat{\mathcal I_v^* (R)} \cap \mathsf q \big( \mathcal I^* (R) \big) \subset \mathcal I_v^* (R) \cap \mathsf q \big( \mathcal I^* (R) \big) = \mathcal I^* (R) \,,
\]
whence $aR \in ( \mathcal I^* (R) \DP \widehat{\mathcal I^* (R)}) \ne \emptyset$.
\end{proof}

\smallskip
\subsection{Transfer Krull monoids} \label{4.c}

A monoid homomorphism  $\theta \colon H \to B$ is called a {\it transfer homomorphism} if it has the following properties:
      \begin{enumerate}
      \item[{\bf (T\,1)\,}] $B = \theta(H) B^\times$ \ and \ $\theta ^{-1} (B^\times) = H^\times$.

      \item[{\bf (T\,2)\,}] If $u \in H$, \ $b,\,c \in B$ \ and \ $\theta (u) = bc$, then there exist \ $v,\,w \in H$ \ such that \ $u = vw$, \ $\theta (v) \in bB^{\times}$, and  $\theta (w) \in c B^{\times}$.
      \end{enumerate}
Transfer homomorphisms allow to pull back arithmetical properties from $B$ to $H$. In particular, we have $\mathsf L_H (a) = \mathsf L_B ( \theta (a))$ for all $a \in H$, whence $\mathcal L (H) = \mathcal L (B)$. This implies that an element $a \in H$ is an atom of $H$ if and only if $\theta (a)$ is an atom of $B$.
A monoid $H$ is called {\it transfer Krull} if there are a Krull monoid $B$ and a transfer homomorphism $\theta \colon H \to B$.
A commutative ring $R$ is said to be transfer Krull if its monoid $R^{\bullet}$ of regular elements is a transfer Krull monoid.

If $H$ is half-factorial, then $\theta \colon H \to (\N_0, +)$, defined by $\theta (u) = 1$ for all $u \in \mathcal A (H)$ and $\theta ( \varepsilon ) = 0$ for every $\varepsilon \in H^{\times}$, is a transfer homomorphism, whence all half-factorial monoids are transfer Krull.
Furthermore, all Krull monoids are transfer Krull (with $\theta$ being the identity).
Since transfer homomorphisms preserve lengths of factorizations, all transfer Krull monoids are \BF-monoids but they need neither be $v$-noetherian nor completely integrally closed.  A list of transfer Krull monoids and domains that are not Krull can be found in  \cite[Example 5.4]{Ge-Zh20a}, and we refer to \cite{Ba-Re22a} for a systematic study of the transfer Krull property. On the other hand, here are some  monoids that are not transfer Krull.
\begin{itemize}
\item $\Int (\Z)$ is not transfer Krull, by \cite[Remark 12]{Fr13a}.

\item The monoid of polynomials having nonnegative integer coefficients is not transfer Krull, by \cite[Remark 54.]{Ca-Fa19a}.

\item The monoid of finite nonempty subsets of the nonnegative integers (with set addition as operation) is not transfer Krull, by \cite[Proposition 4.12]{Fa-Tr18a} (see the discussion after Conjecture \ref{5.12}).
\end{itemize}
Moreover,   let $R$ be a weakly Krull Mori domain. Then $\mathcal I_v^* (R)$ is transfer Krull if and only if it is half-factorial (in the local case this follows from Proposition \ref{4.2}.1 and Proposition \ref{4.9}; the general case is a simple consequence, see \cite[Proposition 7.3]{Ge-Zh20a} and also \cite[Theorem 5.9]{Ba-Ge-Re21c}).
For more results of this flavor, see the references given in the discussion before Corollary \ref{4.4}).

We continue with a simple lemma which we will use to show that the monoid of nonzero ideals over a polynomial ring with at least two variables is not transfer Krull (Theorem \ref{5.1}).

\smallskip
\begin{lemma} \label{4.8}
Let $H$ be a cancellative monoid and $r$ be an ideal system on $H$. Suppose there is a non-$r$-cancellative ideal $I \in \mathcal I_r (H)$, an ideal $J_1 \in \mathcal A ( \mathcal I_r (H))$, and a $J_2 \in \mathcal I_r (H) \setminus \mathcal A ( \mathcal I_r (H))$ such that $I \cdot_r J_1 = I \cdot_r J_2 $. Then $\mathcal I_r (H)$ is not a transfer Krull monoid.
\end{lemma}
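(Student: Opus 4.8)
The plan is to argue by contradiction, assuming that $\mathcal I_r(H)$ is transfer Krull: that is, there exist a Krull monoid $B$ and a transfer homomorphism $\theta \colon \mathcal I_r(H) \to B$. The only fact I would need about $B$ is that, being Krull, it is a completely integrally closed Mori monoid and hence in particular cancellative.

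First I would apply $\theta$ to the hypothesized identity $I \cdot_r J_1 = I \cdot_r J_2$. Since $\theta$ is a monoid homomorphism, this gives $\theta(I)\,\theta(J_1) = \theta(I)\,\theta(J_2)$ in $B$, and cancellativity of $B$ then yields $\theta(J_1) = \theta(J_2)$. I regard this as the heart of the argument: although $I$ is not $r$-cancellative inside $\mathcal I_r(H)$, its image $\theta(I)$ may be cancelled in the ambient Krull monoid, and this forces the images of $J_1$ and $J_2$ to coincide.

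Next I would invoke the standard property of transfer homomorphisms recorded right before the statement, namely that $a \in \mathcal I_r(H)$ is an atom if and only if $\theta(a)$ is an atom of $B$. Applying it to $J_1 \in \mathcal A(\mathcal I_r(H))$ shows that $\theta(J_1)$ is an atom of $B$, while applying it to $J_2 \in \mathcal I_r(H) \setminus \mathcal A(\mathcal I_r(H))$ shows that $\theta(J_2)$ is not an atom of $B$. This contradicts $\theta(J_1) = \theta(J_2)$, and the proof is complete. Alternatively, one can phrase the last step with sets of lengths: $\mathsf L_{\mathcal I_r(H)}(J_1) = \mathsf L_B(\theta(J_1)) = \mathsf L_B(\theta(J_2)) = \mathsf L_{\mathcal I_r(H)}(J_2)$, whereas $\mathsf L(J_1) = \{1\}$ and $1 \notin \mathsf L(J_2)$, since a non-atom admits no factorization of length $1$.

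The argument is short and I do not expect a serious obstacle; the one point that requires care is simply that the cancellation must be carried out in $B$ rather than in $\mathcal I_r(H)$. This is exactly where the Krull (hence cancellative) hypothesis on the target is used, and it explains why the presence of a non-$r$-cancellative ideal sitting inside such an equation is incompatible with $\mathcal I_r(H)$ being transfer Krull.
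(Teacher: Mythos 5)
Your argument is correct and is essentially identical to the paper's proof: apply $\theta$ to the identity $I \cdot_r J_1 = I \cdot_r J_2$, cancel $\theta(I)$ in the cancellative Krull monoid $B$, and then use the fact that transfer homomorphisms preserve and reflect atoms to derive the contradiction that $J_2$ would be an atom.
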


\begin{proof}
Assume to the contrary that there are a Krull monoid $B$ and a transfer homomorphism $\theta \colon \mathcal I_r (H) \to B$. Then $\theta (I) \theta (J_1) = \theta (I) \theta (J_2)$, whence $\theta (J_1) = \theta (J_2)$ because $B$ is cancellative. Since $J_1$ is an atom of $\mathcal I_r (H)$, $\theta (J_1) = \theta (J_2)$ is an atom of $B$ and hence $J_2$ is an atom of $\mathcal I_r (H)$, a contradiction.
\end{proof}

\smallskip
\subsection{Arithmetic of transfer Krull monoids} \label{4.d}
The arithmetic of Krull monoids is determined by their class groups and  the distribution of prime divisors in the  classes. There is an abundance of literature on the arithmetic of Krull monoids (see \cite{Ge-HK06a} and the survey \cite{Sc16a}).
We briefly summarize some results valid not only for Krull monoids but more generally for  transfer Krull monoids, but we restrict for results on sets of lengths. This will allow us to compare them with the arithmetic of the monoids of ideals discussed in Section \ref{5}. In order to do so we recall the monoid of zero-sum sequences over an abelian group.

Let $G$ be an additive abelian group and $G_0 \subset G$ be a subset. An element $S = g_1 \cdot \ldots \cdot g_{\ell} \in \mathcal F (G_0)$, with $\ell \in \N_0$ and $g_1, \ldots, g_{\ell} \in G_0$, is called a sequence over $G_0$. Then $|S|=\ell \in \N_0$ is the length of $S$, $\sigma (S) = g_1 + \ldots + g_{\ell} \in G$ is the sum of $S$, and
\[
\mathcal B (G_0) = \{ T \in \mathcal F (G_0) \colon \sigma (T) = 0 \} \subset \mathcal F (G_0)
\]
is the {\it monoid of zero-sum sequences} over $G_0$. Since the inclusion $\mathcal B (G_0) \hookrightarrow \mathcal F (G_0)$ is a divisor homomorphism, $\mathcal B (G_0)$ is a Krull monoid. As usual, we set $\mathcal L (G_0) := \mathcal L (\mathcal B (G_0))$, $\rho (G_0) : = \rho ( \mathcal B (G_0))$, and $\mathcal U_k (G_0) := \mathcal U_k ( \mathcal B (G_0))$ for all $k \in \N$.

Let $B$ be a Krull monoid, $\varphi \colon B \to D = \mathcal F (P)$ be a divisor theory, and let $G_P = \{ [p] \colon p \in P\} \subset G$ denote the set of classes containing prime divisors. The map
\[
\boldsymbol \beta \colon B \to \mathcal B (G_P), \quad \text{defined by} \quad \boldsymbol \beta (a) = [p_1] \cdot \ldots \cdot [p_{\ell}] \,,
\]
where $\varphi (a) = p_1 \cdot \ldots \cdot p_{\ell}$ with $p_1, \ldots, p_{\ell} \in P$, is a transfer homomorphism.

Let $H$ be a transfer Krull monoid and $\theta_1 \colon H \to B$ be a transfer homomorphism to a Krull monoid $B$. If $G$ is an abelian group, $G_0 \subset G$ a subset, and $\theta_2 \colon B \to \mathcal B (G_0)$, then $\theta = \theta_2 \circ \theta_1 \colon H \to \mathcal B (G_0)$ is a transfer homomorphism from $H$ to the monoid of zero-sum sequences over $G_0$. In this case, we say that $H$ is a transfer Krull monoid over $G_0$. Since every Krull monoid has a transfer homomorphism onto a monoid of zero-sum sequences, every transfer Krull monoid has a transfer homomorphism to a monoid of zero-sum sequences. If $H$ is a Krull monoid with class group $G$ and every class contains at least one prime divisor, then $H$ is a transfer Krull monoid over the class group $G$.

\smallskip
\begin{proposition} \label{4.9}
Let $H$ be a transfer Krull monoid and let $\theta \colon H \to \mathcal B (G_0)$ be a transfer homomorphism, where $G_0 \subset G$ is a subset of an abelian group.
\begin{enumerate}
\item $H$ is fully elastic.

\item If $G_0$ is finite, then the elasticity $\rho (H) < \infty$, $H$ satisfies the Structure Theorem for Sets of Lengths as well as the Structure Theorem for Unions.

\item If $G_0$ contains an infinite abelian group, then for every finite subset $L \subset \N_{\ge 2}$, there is $a \in H$ such that $\mathsf L (a) = L$, whence $\mathcal U_k (H) = \N_{\ge 2}$ for all $k \ge 2$.
\end{enumerate}
\end{proposition}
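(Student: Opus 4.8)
The plan is to transport everything through $\theta$. Since $\theta$ is a transfer homomorphism we have $\mathsf L_H(a) = \mathsf L_{\mathcal B(G_0)}(\theta(a))$ for all $a \in H$, hence $\mathcal L(H) = \mathcal L(\mathcal B(G_0))$, and all three assertions depend only on the system of sets of lengths. Thus it suffices to prove them for $B := \mathcal B(G_0)$, which is a Krull monoid (the inclusion $\mathcal B(G_0) \hookrightarrow \mathcal F(G_0)$ is a divisor homomorphism), in particular a \BF-monoid.

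For (2), if $G_0$ is finite then the Davenport constant of $G_0$ is finite, so $B$ has only finitely many atoms and is a finitely generated monoid; therefore $\rho(B) < \infty$ and $B$ has accepted elasticity by \cite[Theorem 3.1.4]{Ge-HK06a}. The Structure Theorem for Sets of Lengths for $B = \mathcal B(G_0)$ with $G_0$ finite is known (see \cite[Chapter 4.7]{Ge-HK06a}), and, accepted elasticity being available, the Structure Theorem for Unions then follows exactly as in the proof of Corollary \ref{4.4}.2 via \cite[Theorem 1.2]{Tr19a}.

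For (3), let $G_1 \subset G_0$ be an infinite subgroup. Then $\mathcal B(G_1) \hookrightarrow B$ is a divisor-closed submonoid, and $\mathcal B(G_1)$ is a Krull monoid whose class group $G_1$ is infinite and in which every class contains a prime divisor. By Kainrath's realization theorem, every finite nonempty subset $L \subset \N_{\ge 2}$ is a set of lengths in $\mathcal B(G_1)$, hence in $B$ (divisor-closedness), hence in $H$ (lifting along the surjective transfer homomorphism $\theta$). In particular $\{2\}$ and each $\{2,n\}$ with $n \ge 3$ are sets of lengths, so $\N_{\ge 2} \subset \mathcal U_2(H)$; since $1 \notin L$ whenever $L$ contains an integer $\ge 2$ (an atom is not a product of two non-units) and $0 \in L$ forces $L = \{0\}$, we get $\mathcal U_2(H) = \N_{\ge 2}$, and then $\mathcal U_k(H) = \N_{\ge 2}$ for all $k \ge 2$ by Lemma \ref{3.1}.3.

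It remains to prove (1), i.e.\ that $B = \mathcal B(G_0)$ is fully elastic. If $B$ is half-factorial this is trivial, and if $\rho(B) = \infty$ it follows from the realization statement of (3): write $q \in \Q_{>1}$ in lowest terms as $r/s$ and realize the set of lengths $\{s,r\}$ if $s \ge 2$, respectively $\{2,2r\}$ if $s = 1$. In the remaining case $1 < \rho(B) < \infty$ the natural tool is Proposition \ref{3.2}: if $0 \in G_0$, every zero-sum sequence factors uniquely as $0^{m}T$ with $T \in \mathcal B(G_0 \setminus \{0\})$, so $B \cong \mathcal F(\{0\}) \times \mathcal B(G_0 \setminus \{0\})$ with $\mathcal F(\{0\})$ half-factorial but not a group, which gives Condition (a); and whenever $\mathcal B(G_0 \setminus \{0\})$ has accepted elasticity (in particular when $G_0$ is finite) so does $B$, which gives Condition (b) through its ``in particular'' clause, and Proposition \ref{3.2} applies. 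The main obstacle is the fully general case, notably $0 \notin G_0$: then $B$ may be directly indecomposable and have no prime element (for instance $\mathcal B(\{1,2\})$ over $\Z/3\Z$), so Proposition \ref{3.2} does not apply directly. In that generality one either invokes the known fact that $\mathcal B(G_0)$ is fully elastic for every subset $G_0$ of an abelian group (for finite $G_0$ this is part of the study of sets of elasticities of locally finitely generated monoids, cf.\ \cite{Zh19a}), or argues by hand, using that a non-half-factorial $\mathcal B(G_0)$ contains elements whose sets of lengths are arbitrarily long intervals (or arithmetic progressions) and tuning the ratio $\max\mathsf L(\cdot)/\min\mathsf L(\cdot)$ to a prescribed rational $q \in (1,\rho(B))$ by a suitable such choice.
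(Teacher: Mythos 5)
Your reduction to $B = \mathcal B(G_0)$ via $\mathcal L(H) = \mathcal L(\mathcal B(G_0))$ is exactly what the paper does, and parts (2) and (3) of your proposal are correct and essentially coincide with the paper's proof: for (2) the paper cites \cite[Theorem 3.1.4]{Ge-HK06a} for finiteness of the elasticity, \cite[Chapter 4.7]{Ge-HK06a} for the Structure Theorem for Sets of Lengths, and \cite[Corollary 3.6 and Theorem 4.2]{Ga-Ge09b} for the Structure Theorem for Unions (your alternative route through accepted elasticity and \cite[Theorem 1.2]{Tr19a} is the one the paper itself uses in the proof of Corollary \ref{4.4}.2, so that is fine); for (3) the paper invokes the same realization theorem \cite[Theorem 7.4.1]{Ge-HK06a} together with Lemma \ref{3.1}.3.

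Part (1) is where you have a genuine gap, and in fact two distinct problems. First, your case split is logically flawed: $\rho(B) = \infty$ does \emph{not} imply that $G_0$ contains an infinite abelian group, so the realization statement of (3) is not available in that case. For instance, $G_0 = \Z \setminus \{0\} \subset \Z$ contains no subgroup at all (every subgroup contains $0$), yet $\mathcal B(G_0)$ has infinite elasticity: the sequence $(-1)^n \cdot n \cdot (-n) \cdot 1^n$ factors both as $\bigl[(-1)^n \cdot n\bigr]\cdot\bigl[(-n)\cdot 1^n\bigr]$ (length $2$) and as $\bigl[(-1)\cdot 1\bigr]^n \cdot \bigl[(-n)\cdot n\bigr]$ (length $n+1$). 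Second, in the remaining case you concede yourself that Proposition \ref{3.2} does not apply when $0 \notin G_0$ (no cancellative prime element, no guaranteed direct factor) and that accepted elasticity may fail for infinite $G_0$; the closing sentence about "tuning the ratio" by hand is a hope rather than an argument, since hitting a prescribed $q \in (1,\rho(B)) \cap \Q$ exactly requires simultaneous control of $\max \mathsf L$ and $\min \mathsf L$, which is the whole content of the statement. The paper does not attempt any of this: its proof of (1) is a single citation to \cite[Theorem 3.1]{Ge-Zh19a}, which asserts precisely that every transfer Krull monoid (equivalently, every $\mathcal B(G_0)$ for an arbitrary subset $G_0$ of an abelian group) is fully elastic. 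So your fallback of "invoking the known fact" is indeed the paper's actual argument, but the reference you offer, \cite{Zh19a}, only covers the locally finitely generated case (e.g., $G_0$ finite); the general statement you need is the Geroldinger--Zhong result, and without it part (1) remains unproved.
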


\begin{proof}
1. This follows from \cite[Theorem 3.1]{Ge-Zh19a}.

2. Suppose that $G_0$ is finite. We have $\mathcal L (H) = \mathcal L (G_0)$, whence $\rho (H) = \rho (G_0)$, and $\mathcal U_k (H) = \mathcal U_k (G_0)$ for all $k \in \N$. Since $G_0$ is finite, $\mathcal B (G_0)$ is finitely generated, whence $\rho (G_0) < \infty$ by \cite[Theorem 3.1.4]{Ge-HK06a}. Furthermore, $H$ satisfies the Structure Theorem for Sets of Lengths by \cite[Chapter 4.7]{Ge-HK06a} and the Structure Theorem for Unions by \cite[Corollary 3.6 and Theorem  4.2]{Ga-Ge09b}.

3. Suppose that $G_0$ contains an infinite abelian group $G_1$. By \cite[Theorem 7.4.1]{Ge-HK06a}, every finite subset $L \subset \N_{\ge 2}$  lies in $\mathcal L (G_1)$, and hence $L \in \mathcal L (G_1) \subset \mathcal L (G_0) = \mathcal L (H)$. Since $\{2, k\} \in \mathcal L (H)$ for every $k \in \N_{\ge 2}$, it follows that $\mathcal U_2 (H) = \N_{\ge 2}$, whence $\mathcal U_k (H) = \N_{\ge 2}$ for every $k \ge 2$ by Lemma \ref{3.1}.
\end{proof}

Let $\theta \colon H \to \mathcal B (G_0)$ be as above. If $G_0$ is a finite abelian group, then there is a rich literature on invariants controlling the structure of sets of lengths (\cite{Sc16a}). The elasticity $\rho (H)$ can be finite even if $G_0$ is infinite (if $G$ is finitely generated, then \cite{Gr22a} offers a characterization of when $\rho (H)$ is finite, and if $\rho (H) < \infty$, then also the Structure Theorem for Unions holds). The unions $\mathcal U_k (H)$ are intervals if $G_0$ is a group, but they need not be intervals in general.
There are  Krull monoids that do not satisfy the  Structure Theorem for Unions (\cite[Theorem 4.2]{F-G-K-T17}), and there are Krull monoids that neither satisfy the Structure Theorem for Sets of Lengths nor does every finite subset $L \subset \N_{\ge 2}$ occur as a set of lengths.

\smallskip
\section{On the monoid of nonzero ideals of polynomial rings} \label{5}
\smallskip

The main goal of this section is to prove the  result given in Theorem \ref{5.1}. We start with a couple of remarks. Let $D$ be a noetherian domain, $n \ge 2$,  $S = (\N_0^n, +)$, and $R = D[S] = D [X_1, \ldots, X_n]$.  Then $D$ is Krull if and only if $D$ is integrally closed if and only if $R$ is Krull if and only if $\mathcal I^* (D)$ resp. $\mathcal I^* (R)$ are Krull (see Theorem \ref{4.5}). Furthermore, $\mathcal C (D)$ and $\mathcal C (R)$ are isomorphic, $D$ is factorial if and only if $R$ is factorial if and only if $\mathcal C (D)$ is trivial. If $D$ is factorial (for example, if $D$ is a field), then
\begin{equation} \label{invertible}
\{ aR \colon a \in R^{\bullet} \} = \mathcal I^* (R) = \mathcal I_v^* (R) = \mathcal I_v (R) \quad \text{and all these monoids are factorial.}
\end{equation}
In orders of Dedekind domains with finite class group, monoids of all nonzero ideals and monoids of invertible ideals have similar arithmetical properties (\cite{Br-Ge-Re20, Ge-Re19d, Ba-Ge-Re21c}).
In contrast to \eqref{invertible} and in contrast to orders in Dedekind domains, our conjecture (Conjecture \ref{5.12}) is that the arithmetic of the monoid $\mathcal I (R)$ is completely different from the arithmetic of $\mathcal I^* (R)$ and that it is as wild as it is for Krull monoids with infinite class group and prime divisors in all classes (see Proposition \ref{4.9}.3). The main result of this section (Theorem \ref{5.1}) is a first step towards this conjecture.

\smallskip
\begin{theorem} \label{5.1}
Let $R = D[X_1, \ldots, X_n]$ be the polynomial ring in $n \ge 2$ indeterminates over a domain $D$, and suppose that $\mathcal I (R)$ is a \BF-monoid.
\begin{enumerate}
\item $\mathcal I (R)$  is  neither transfer Krull nor locally finitely generated. Moreover, if  $D^{\times}$ is infinite, then $\mathcal I (R)$ is not an \FF-monoid.

\item $\mathcal U_k ( \mathcal I (R) ) = \N_{\ge 2}$ for all $k \ge 2$.

\item $\mathsf L_{\mathcal I (R) } ( \langle X_1, X_2 \rangle^k ) = [2, k]$ for all $k \ge 2$.

\item $\mathcal I (R)$ is fully elastic.
\end{enumerate}
\end{theorem}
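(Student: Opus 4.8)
The plan is to reduce the whole theorem to a single computation, namely the length set of the powers of the height-two prime $\mathfrak m = \langle X_1,X_2\rangle$, which is exactly part (3); parts (1), (2) and (4) will then follow formally. The two tools attached to $\mathfrak m$ are the $\mathfrak m$-adic order valuation and Zariski's theory of complete ideals in the two-dimensional regular local ring $R_{\mathfrak m}$. I first establish the upper bound and a supply of atoms, then the lower bound and the full interval (the hard part), and finally the formal consequences.

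For the upper bound, note that $\mathrm{gr}_{\mathfrak m}(R)\cong D[X_3,\dots,X_n][\xi_1,\xi_2]$ is a domain, so the order function $v(I)=\max\{d\colon I\subseteq\mathfrak m^d\}$ is additive, $v(IJ)=v(I)+v(J)$, giving a homomorphism $\mathcal I(R)\to(\N_0,+)$ with $v(\mathfrak m)=1$. Since $\mathfrak m$ is generated by the regular sequence $X_1,X_2$, each $\mathfrak m^k$ is $\mathfrak m$-primary, and a primary-plus-determinant argument (if $I_1\not\subseteq\mathfrak m$ in a factorization $\mathfrak m^k=I_1\cdots I_r$, pick $g\in I_1\setminus\mathfrak m$; primariness forces $I_2\cdots I_r=\mathfrak m^k$, then $I_1\mathfrak m^k=\mathfrak m^k$ with $\mathrm{Ann}(\mathfrak m^k)=0$ gives $I_1=R$) shows every proper factor of $\mathfrak m^k$ lies in $\mathfrak m$. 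Summing orders yields $\max\mathsf L(\mathfrak m^k)=k$ and that $\mathfrak m$ is an atom. Applying integral closure and Zariski's theorems (products of complete ideals are complete, and complete ideals factor uniquely into simple ones) shows that every divisor of $\mathfrak m^k$ is a reduction of $\mathfrak m^{v(\cdot)}$; as $\mathfrak m$ has no proper reduction, the only order-one divisor is $\mathfrak m$. This gives a first atomicity criterion: a reduction $B$ of $\mathfrak m^a$ with $\mathfrak m\nmid B$ and $a\le 3$ is an atom. In particular $\langle X_1^2,X_2^2\rangle$ and, for $t\in D^{\times}$, the ideals $B_t=\langle X_1^2-tX_2^2,\,X_1X_2\rangle$ are order-two atoms, with $\mathfrak m B_t=\mathfrak m^3$ and $\mathfrak m^{k-2}\langle X_1^2,X_2^2\rangle=\mathfrak m^k$.

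For part (3) I would realize each $\ell\in[2,k]$ by a factorization $\mathfrak m^k=\mathfrak m^{\ell-1}\cdot B_{k-\ell+1}$, where $B_a$ is an order-$a$ atom chosen so that $\mathfrak m^{\ell-1}B_a=\mathfrak m^k$; such a factorization has length exactly $\ell$. For $\ell\in\{k,k-1,k-2\}$ the needed atoms have order $\le 3$ and are supplied above (e.g.\ $\mathfrak m$, $\langle X_1^2,X_2^2\rangle$, and an order-three atom such as $\langle X_1^3,X_1^2X_2^2,X_2^3\rangle$), and already the length-two factorization $\langle X_1^2,X_2^2\rangle\cdot\langle X_1^3,X_2^3,X_1X_2(X_1+X_2)\rangle=\mathfrak m^5$ shows that both the balancing of orders and the use of non-monomial generators are essential. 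The main obstacle is to produce, for every $a$, an order-$a$ atom dividing the appropriate power of $\mathfrak m$: the naive candidates either fail to multiply up to $\mathfrak m^k$ or visibly decompose (for instance $\langle X_1^2,X_2^2\rangle^2\ne\mathfrak m^4$), and for $a\ge 4$ the criterion "$\mathfrak m\nmid B$" no longer rules out balanced splittings such as $\{2,2\}$. Here I expect to pass, via a term order, to the leading-form ideals in $\mathrm{gr}_{\mathfrak m}(R)=k'[\xi_1,\xi_2]$ and combine the factorization theory of binary forms with Gröbner-basis/monomial combinatorics to certify irreducibility of the requisite many-generator reductions; this is where the methods of polynomial ideal theory enter, and it is the step I expect to be delicate.

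Granting $\mathsf L(\mathfrak m^k)=[2,k]$, the rest is formal. Part (2): each $[2,n]=\mathsf L(\mathfrak m^n)$ contains $2$ with maximum $n$, so $\mathcal U_2(\mathcal I(R))=\N_{\ge 2}$, and Lemma \ref{3.1} upgrades this to $\mathcal U_k(\mathcal I(R))=\N_{\ge 2}$ for all $k\ge 2$. Part (4): the variable $X_1$ is prime in $R$, so $X_1R$ is a cancellative prime element of $\mathcal I(R)$, giving a splitting $\mathcal I(R)=\mathcal F(\{X_1R\})\times H_2$; since $X_2^m\in\mathfrak m^m\setminus X_1R$ we have $\mathfrak m^m\in H_2$ with $\mathsf L_{H_2}(\mathfrak m^m)=[2,m]$, so Condition (b') of Proposition \ref{3.2} holds and $\mathcal I(R)$ is fully elastic. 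Part (1): $\mathfrak m\langle X_1^2,X_2^2\rangle=\mathfrak m\,\mathfrak m^2=\mathfrak m^3$ with $\langle X_1^2,X_2^2\rangle\ne\mathfrak m^2$ exhibits $\mathfrak m$ as non-cancellative, $\langle X_1^2,X_2^2\rangle$ as an atom and $\mathfrak m^2$ as a non-atom, so Lemma \ref{4.8} shows $\mathcal I(R)$ is not transfer Krull; the elasticities $\rho(\mathfrak m^k)=k/2$ are unbounded, whereas a finitely generated monoid has finite elasticity (its finitely many atoms all have order in $[1,N]$, forcing $\min\mathsf L(\mathfrak m^k)\ge k/N$), so $\LK\mathfrak m\RK$, and hence $\mathcal I(R)$, is not locally finitely generated; and when $D^{\times}$ is infinite the pairwise distinct factorizations $\mathfrak m^3=\mathfrak m\cdot B_t$ $(t\in D^{\times})$ make $\mathsf Z(\mathfrak m^3)$ infinite, so $\mathcal I(R)$ is not an \FF-monoid.
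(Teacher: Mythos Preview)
Your reduction of the theorem to part~(3) is exactly right, and your derivations of parts~(1), (2), and (4) from (3) are essentially the same as the paper's (the paper uses the infinite family of atoms $\mathfrak c_{2i+1}$ rather than an elasticity bound to see that $\LK\mathfrak m\RK$ is not finitely generated, but your argument via the order homomorphism works too). The upper bound $\max\mathsf L(\mathfrak m^k)\le k$ via additivity of the order function and Lemma~\ref{5.4} is also the paper's argument (there packaged as Lemmas~\ref{5.5} and~\ref{5.8}).

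The genuine gap is the one you yourself flag: you do not actually produce, for every $a\ge 4$, an order-$a$ atom $B_a$ with $\mathfrak m\cdot B_a=\mathfrak m^{a+1}$. Saying ``I expect to pass, via a term order, to the leading-form ideals \ldots\ and combine the factorization theory of binary forms with Gr\"obner-basis/monomial combinatorics'' is a plan, not a proof, and this is precisely the technical heart of the theorem. The paper carries this out concretely: it writes down explicit monomial ideals $\mathfrak c_{2i+1}$, $\mathfrak c_{2i}$ (for $i\ge 3$), $\mathfrak b_i$, and one non-monomial ideal $\mathfrak c'=\langle X_1^3+X_2^3,X_1^2X_2,X_1X_2^2\rangle$, verifies the identities $\mathfrak m\cdot\mathfrak c_a=\mathfrak m^{a+1}$ by hand (Lemma~\ref{5.2}), and then proves atomicity (Proposition~\ref{5.10}) by analyzing, for a hypothetical factorization $B_a=IJ$, the $K$-vector spaces $I_K[d]$ and $J_K[e]$ of lowest-degree forms (Lemma~\ref{5.6}) and chasing initial monomials after putting bases into echelon form (Lemma~\ref{5.9}). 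This is indeed the ``leading-form plus term-order'' argument you anticipate, but the content is in the case analysis, and your proposal supplies none of it.

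Two smaller points. Your appeal to Zariski's theory of complete ideals is a detour: the statements you extract (``every divisor of $\mathfrak m^k$ is a reduction of $\mathfrak m^{v(\cdot)}$'', ``the only order-one divisor is $\mathfrak m$'') live in $R_{\mathfrak m}$, and you do not explain why they transfer back to $\mathcal I(R)$; in any case the upper bound $\max\mathsf L(\mathfrak m^k)\le k$ does not need them. And your ``order-three atom $\langle X_1^3,X_1^2X_2^2,X_2^3\rangle$'' has a degree-four generator; presumably you mean $\langle X_1^3,X_1^2X_2,X_2^3\rangle$ or similar, but even then you would still have to check both that it is an atom and that it multiplies up correctly.
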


\smallskip
We briefly discuss the assumption that $\mathcal I (R)$ is a \BF-monoid (made in Theorem \ref{5.1}, Lemma \ref{5.3}, Proposition \ref{5.10}, and Conjecture \ref{5.12}). If $R$ is noetherian or a one-dimensional Mori domain, then $\mathcal I (R)$ is a \BF-monoid (see Proposition \ref{2.2} and the discussion after Proposition \ref{2.1}). But the property, that $\mathcal I (R)$ is a \BF-monoid, seems to be much weaker than the above two assumptions. A crucial property in this context is Krull's Intersection Theorem, which guarantees that the semigroup $\mathcal I (R)$ is unit-cancellative.
We mention two further  classes of polynomial rings which satisfy Krull's Intersection Theorem (for more on the validity of Krull's Intersection Theorem, we refer to \cite{An-Ma-Ni76, HK98}).
\begin{itemize}
	\item [(i)] Let $D$ be a domain, $\overline D$ its integral closure (in the quotient field of $D$), and let $D^*$ be any domain with $D \subset D^* \subset \overline D$. The integral closure of $D[X_1, \ldots, X_n]$ equals $\overline D [X_1, \ldots, X_n]$, and we have
	\[
	D[X_1, \ldots, X_n] \subset D^*[X_1, \ldots, X_n] \subset \overline D[X_1, \ldots, X_n] \,.
	\]
	If  $D$ is noetherian, then $D[X_1, \ldots, X_n]$ is noetherian, whence $D^*[X_1, \ldots, X_n]$ satisfies Krull's Intersection Theorem by \cite[Theorem 5]{An75a}. Moreover, if $D$ is noetherian, then $\overline D$ is Krull and if $D$ is Krull, then $D[X_1, \ldots, X_n]$ is a Krull domain with class group isomorphic to $\mathcal C (D)$ and infinitely many prime divisors in all classes (compare with Corollary \ref{4.6}).
	
	\item [(ii)] If  the integral closure $\overline{R}$ of $R  = D[X_1, \ldots, X_n]$ in some field extension of the quotient field of $R$ is noetherian, then $R$ satisfies Krull's Intersection Theorem by \cite[Proposition 2.6]{He-La-Re21}.
\end{itemize}

\smallskip
We proceed in a series of lemmas.
Let $R$ be a domain and $X_1,X_2 \in R^{\bullet}$. For all $i \in \N$, we consider the following four families of  nonzero ideals of $R$:
\begin{itemize}
	\item [(i)] $\mathfrak{a}_i(X_1,X_2) := \langle X_1, X_2 \rangle^i$,
	
	\item [(ii)] $\mathfrak{b}_i(X_1,X_2) := \langle X_1^i, X_2^i \rangle$,
	
	\item [(iii)] $\mathfrak{c}_{2i+1}(X_1,X_2) := \langle \{X_1^{2i+1}, X_1^{2i}X_2\}\cup \{X_1^{2i-j}X_2^{j+1}~:~ j\in[1,2i+1] \ \text{is even} \} \rangle$, and
	
	\item [(iv)] $\mathfrak{c}_{2i}(X_1,X_2) := \langle \{X_1^{2i}, X_1^{2i-1}X_2\}\cup \{X_1^{2i-j}X_2^{j}~:~ j\in[1,2i] \ \text{is even}  \} \rangle$.
\end{itemize}

\smallskip
\begin{lemma}\label{5.2}
Let $R$ be a domain and $X_1,X_2 \in R^{\bullet}$.
\begin{enumerate}
\item  $\mathfrak{a}_1(X_1,X_2)^k = \mathfrak{a}_k(X_1,X_2)$ for all  $k \in \N$.
		
\item $\mathfrak{a}_k(X_1,X_2) \cdot \mathfrak{b}_{\ell}(X_1,X_2) = \mathfrak a_{k+\ell}(X_1,X_2)$ \ for all $k, \ell \in \N$ with $k\geq \ell-1$.
		
\item $ \mathfrak{a}_1(X_1,X_2)\cdot \mathfrak{c}_{2k+1}(X_1,X_2) = \mathfrak a_{2k+2}(X_1,X_2)$ \ for all  $k \in \N$.
		
\item $ \mathfrak{a}_1(X_1,X_2)\cdot \mathfrak{c}_{2k}(X_1,X_2) = \mathfrak a_{2k+1}(X_1,X_2)$ \ for all  $k \in \N$.
\end{enumerate}
\end{lemma}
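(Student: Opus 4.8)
The plan is to reduce all four identities to elementary bookkeeping with monomials in $X_1$ and $X_2$. The starting point is that, in any commutative ring, $\mathfrak{a}_m(X_1,X_2)=\langle X_1,X_2\rangle^m$ is generated by $\{X_1^aX_2^b : a,b\in\N_0,\ a+b=m\}$, and that $\langle S\rangle\langle T\rangle=\langle st : s\in S,\ t\in T\rangle$ for subsets $S,T$ of $R$. Hence every ideal occurring in the lemma is generated by monomials of one fixed total degree, and an equality of two such ideals follows once one checks that each generating monomial of either side lies in the ideal generated by the other side; in particular, possible coincidences among the $X_1^aX_2^b$ (which can happen since $R$ is an arbitrary domain and $X_1,X_2$ arbitrary nonzero elements) are irrelevant. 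Part (1) is immediate from the definitions, $\mathfrak{a}_1(X_1,X_2)^k=\langle X_1,X_2\rangle^k=\mathfrak{a}_k(X_1,X_2)$.

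For the inclusions ``$\subseteq$'' in (2)--(4) I would observe that, since the listed generators of $\mathfrak{b}_\ell$, of $\mathfrak{c}_{2k+1}$, and of $\mathfrak{c}_{2k}$ are monomials of total degree $\ell$, $2k+1$, and $2k$ respectively, we have $\mathfrak{b}_\ell(X_1,X_2)\subseteq\mathfrak{a}_\ell(X_1,X_2)$, $\mathfrak{c}_{2k+1}(X_1,X_2)\subseteq\mathfrak{a}_{2k+1}(X_1,X_2)$, and $\mathfrak{c}_{2k}(X_1,X_2)\subseteq\mathfrak{a}_{2k}(X_1,X_2)$. Multiplying by $\mathfrak{a}_k(X_1,X_2)$ resp.\ $\mathfrak{a}_1(X_1,X_2)$ and invoking (1) yields the containments in $\mathfrak{a}_{k+\ell}$, resp.\ $\mathfrak{a}_{2k+2}$, $\mathfrak{a}_{2k+1}$.

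The substance lies in the reverse inclusions ``$\supseteq$'', which I would establish by exhibiting each monomial $X_1^cX_2^d$ of the target total degree as an explicit product of a generator of the first factor and a generator of the second. For (2): given $c+d=k+\ell$, if $c\ge\ell$ then $X_1^cX_2^d=(X_1^{c-\ell}X_2^d)X_1^\ell$ with $(c-\ell)+d=k$, and symmetrically if $d\ge\ell$; and $k\ge\ell-1$ forces $c\ge\ell$ or $d\ge\ell$, since otherwise $k+\ell=c+d\le 2\ell-2$. For (3) and (4) the same idea is organized by the $X_2$-exponent: multiplying a monomial $X_1^aX_2^b$ of total degree $m$ by $\langle X_1,X_2\rangle$ produces precisely the $X_2$-exponents $b$ and $b+1$ in total degree $m+1$. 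So it suffices to check that, as $X_1^aX_2^b$ ranges over the listed generators of $\mathfrak{c}_{2k+1}$ (whose $X_2$-exponents are $0,1,3,5,\ldots,2k+1$) resp.\ of $\mathfrak{c}_{2k}$ (whose $X_2$-exponents are $0,1,2,4,6,\ldots,2k$), the pairs $\{b,b+1\}$ cover $[0,2k+2]$ resp.\ $[0,2k+1]$. This is a short parity count: the two low generators contribute $\{0,1\}\cup\{1,2\}=\{0,1,2\}$, and the remaining generators, whose $X_2$-exponents step by $2$, contribute the pairs $\{3,4\},\{5,6\},\ldots$ resp.\ $\{2,3\},\{4,5\},\ldots$ which tile the rest of the interval.

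I expect no real obstacle here. The only points requiring a little care are the parity bookkeeping in (3)--(4) --- the families $\mathfrak{c}_{2k+1}$ and $\mathfrak{c}_{2k}$ are tailored so that the $X_2$-exponents present, together with their shifts by one, exactly tile the relevant interval --- and the boundary case $k=\ell-1$ in (2), which I singled out above.
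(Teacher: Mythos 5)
Your proposal is correct and is exactly the "direct calculation" the paper invokes without writing out: mutual containment of generating sets, with the reverse inclusions handled by the degree/parity bookkeeping on the exponents of $X_1^aX_2^b$. Your explicit remarks on why coincidences among the products $X_1^aX_2^b$ are harmless and on the boundary case $k=\ell-1$ in (2) are the right points of care, and the tiling argument for (3)--(4) matches how the families $\mathfrak c_{2k+1}$, $\mathfrak c_{2k}$ are designed.
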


\begin{proof}
This follows by direct calculations.
\end{proof}

\smallskip
\begin{lemma}\label{5.3}
Let $R$ be a domain such that $\mathcal I (R)$ is a \BF-monoid. Suppose there exist distinct $X_1, X_2 \in R^{\bullet}$ such that $\mathfrak{a}_1(X_1,X_2), \mathfrak{b}_2(X_1,X_2)$, and $\mathfrak{c}_{2i+1}(X_1,X_2)$ are atoms of $\mathcal I(R)$ for all $i\in \mathbb N$. Then $\mathcal I (R)$ has the following properties.
\begin{enumerate}
\item $\mathcal{I}(R)$ is not a transfer Krull monoid.
		
\item $\mathcal{I}(R)$ is not  locally finitely generated.
		
\item $\mathcal U_k (\mathcal I(R)) = \N_{\ge 2}$ for all $k\geq 2$.
\end{enumerate}
\end{lemma}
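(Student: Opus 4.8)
All three assertions are derived from the earlier results, with the explicit identities of Lemma~\ref{5.2} supplying the factorizations that are needed; throughout write $\mathfrak{a}_k = \mathfrak{a}_k(X_1,X_2)$, $\mathfrak{b}_k = \mathfrak{b}_k(X_1,X_2)$, and $\mathfrak{c}_k = \mathfrak{c}_k(X_1,X_2)$. For (1) I would apply Lemma~\ref{4.8} with $I = \mathfrak{a}_1$, $J_1 = \mathfrak{b}_2$, and $J_2 = \mathfrak{a}_1^2 = \mathfrak{a}_2$. Indeed, by Lemma~\ref{5.2}.1 and Lemma~\ref{5.2}.2 (applied with $k = 1$, $\ell = 2$) one has $I \cdot J_1 = \mathfrak{a}_1 \cdot \mathfrak{b}_2 = \mathfrak{a}_3 = \mathfrak{a}_1^3 = \mathfrak{a}_1 \cdot \mathfrak{a}_2 = I \cdot J_2$. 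By hypothesis $J_1 = \mathfrak{b}_2$ is an atom, whereas $J_2 = \mathfrak{a}_1^2 = \mathfrak{a}_1 \cdot \mathfrak{a}_1$ is not an atom, since $\mathfrak{a}_1 \notin \mathcal{I}(R)^{\times}$; in particular $J_1 \ne J_2$, so $I = \mathfrak{a}_1$ is a non-cancellative ideal. Hence Lemma~\ref{4.8} yields that $\mathcal{I}(R)$ is not transfer Krull.

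For (2) I would exhibit a divisor-closed submonoid of $\mathcal{I}(R)$ with infinitely many atoms, namely $\LK \mathfrak{a}_1 \RK$. By Lemma~\ref{5.2}.3 we have $\mathfrak{a}_1 \cdot \mathfrak{c}_{2i+1} = \mathfrak{a}_{2i+2} = \mathfrak{a}_1^{2i+2} \in \LK \mathfrak{a}_1 \RK$ for every $i \in \N$, so $\mathfrak{c}_{2i+1}$ divides an element of $\LK \mathfrak{a}_1 \RK$ and therefore lies in $\LK \mathfrak{a}_1 \RK$. By hypothesis each $\mathfrak{c}_{2i+1}$ is an atom, and they are pairwise distinct: if $\mathfrak{c}_{2i+1} = \mathfrak{c}_{2j+1}$ with $i < j$, then $\mathfrak{a}_1^{2i+2} = \mathfrak{a}_1^{2j+2} = \mathfrak{a}_1^{2i+2}\cdot\mathfrak{a}_1^{2(j-i)}$, and unit-cancellativity of $\mathcal{I}(R)$ forces $\mathfrak{a}_1^{2(j-i)}$ to be the identity $R$, hence $\mathfrak{a}_1 = R$, contradicting that $\mathfrak{a}_1$ is an atom. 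Thus $\LK \mathfrak{a}_1 \RK$ is a reduced monoid with infinitely many atoms, so it is not finitely generated (a finitely generated reduced monoid has only finitely many atoms), and $\mathcal{I}(R)$ is not locally finitely generated.

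For (3), since $\mathcal{I}(R)$ is a \BF-monoid, Lemma~\ref{3.1}.3 reduces the claim to $\mathcal{U}_2(\mathcal{I}(R)) = \N_{\ge 2}$, that is, to producing for each $n \ge 2$ an ideal whose set of lengths contains both $2$ and $n$; the powers $\mathfrak{a}_1^k$ serve this purpose. By Lemma~\ref{5.2}.1 and Lemma~\ref{5.2}.3, for $j \ge 1$ the ideal $\mathfrak{a}_1^{2j+2} = \mathfrak{a}_1 \cdot \mathfrak{c}_{2j+1}$ is a product of two atoms, so $2 \in \mathsf{L}(\mathfrak{a}_1^{2j+2})$, and likewise $\mathfrak{a}_1^{2j} = \mathfrak{a}_1\cdot\mathfrak{c}_{2j-1}$ gives $2 \in \mathsf{L}(\mathfrak{a}_1^{2j})$ for $j \ge 2$ (while $2 \in \mathsf{L}(\mathfrak{a}_1^2)$ trivially). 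On the other hand $2j \in \mathsf{L}(\mathfrak{a}_1^{2j})$ via the factorization into $2j$ copies of $\mathfrak{a}_1$, and by Lemma~\ref{5.2}.2 (with $k = 2j$, $\ell = 2$) the ideal $\mathfrak{a}_1^{2j+2} = \mathfrak{a}_1^{2j}\cdot\mathfrak{b}_2$ is a product of $2j+1$ atoms, so $2j+1 \in \mathsf{L}(\mathfrak{a}_1^{2j+2})$. Hence every even $n \ge 2$ occurs in $\mathsf{L}(\mathfrak{a}_1^{n})$ together with $2$, and every odd $n \ge 3$ occurs in $\mathsf{L}(\mathfrak{a}_1^{n+1})$ together with $2$, which gives $\mathcal{U}_2(\mathcal{I}(R)) = \N_{\ge 2}$, and Lemma~\ref{3.1}.3 completes the proof. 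The only delicate point is the parity bookkeeping in (3): one must pair the length-two factorizations coming from the $\mathfrak{c}$'s with the length-$(2j+1)$ factorization coming from $\mathfrak{b}_2$ so that both even and odd targets are reached while $2$ stays in the same set of lengths. Apart from that, the argument is a routine assembly of Lemmas~\ref{4.8}, \ref{3.1}, and~\ref{5.2}, so I do not anticipate a genuine obstacle.
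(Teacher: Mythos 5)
Your proposal is correct and follows essentially the same route as the paper: Lemma~\ref{4.8} applied to the identity $\mathfrak a_1\cdot\mathfrak b_2=\mathfrak a_3=\mathfrak a_1\cdot\mathfrak a_2$ for (1), the infinitely many atoms $\mathfrak c_{2i+1}$ in $\LK\mathfrak a_1\RK$ for (2), and the factorizations $\mathfrak a_1\cdot\mathfrak c_{2i+1}=\mathfrak a_1^{2i+2}=\mathfrak a_1^{2i}\cdot\mathfrak b_2$ combined with Lemma~\ref{3.1}.3 for (3). Your additional checks (that $\mathfrak a_1$ is non-cancellative, that $\mathfrak a_2$ is not an atom, and that the $\mathfrak c_{2i+1}$ are pairwise distinct) are details the paper leaves implicit, and they are all correct.
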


\begin{proof}
1. By Lemma~\ref{5.2} (items 1 and 2), we obtain that
	\[
	\mathfrak{a}_1(X_1,X_2)\cdot\mathfrak{b}_2(X_1,X_2) = \mathfrak a_3 (X_1, X_2) = \mathfrak{a}_1(X_1,X_2)\cdot\mathfrak{a}_2(X_1,X_2) \,.
	\]
	Therefore,  Lemma~\ref{4.8} implies that $\mathcal I (R)$ is not transfer Krull.
	
	2. By   Lemma~\ref{5.2}.3,  the divisor-closed submonoid $\LK \mathfrak{a}_1(X_1,X_2) \RK \subset \mathcal I (R)$ contains infinitely many atoms, whence $\mathcal{I}(R)$ is not locally finitely generated.
	
	3. By Lemma~\ref{3.1}.3, it suffices to prove that  $\mathcal U_2 (\mathcal I(R)) = \N_{\ge 2}$. Since $\mathfrak{c}_{2i+1}(X_1,X_2)$ is an atom of $\mathcal I(R)$ for all $i\in \mathbb N$,  Lemma~\ref{5.2}.3 implies that
	\[
	\{\nu \in \N ~:~ \nu \equiv 0 \mod 2\} \subset \mathcal U_2(\mathcal I(R)).
	\]
	Furthermore, by  Lemma~\ref{5.2} (items 2 and 3), we observe that
	\[
	\mathfrak{a}_{2i}(X_1,X_2)\cdot \mathfrak{b}_2(X_1,X_2) = \mathfrak{a}_{2i-1}(X_1,X_2)\cdot \mathfrak{a}_{3}(X_1,X_2) = \mathfrak{a}_{2i+2}(X_1,X_2) = \mathfrak{a}_1(X_1,X_2)\cdot \mathfrak{c}_{2i+1}(X_1,X_2)
	\]
	for all $i \in \N$, whence
	\[
	\{\nu \in \N_{\geq 3} ~:~ \nu \equiv 1 \mod 2\} \subset \mathcal U_2(\mathcal I(R)) \,.  \qedhere
	\]
\end{proof}

\smallskip
\begin{lemma}\label{5.4}
Let $R$ be a domain and $\mathfrak{q}$ be a $\mathfrak{p}$-primary ideal of $R$ for some $\mathfrak p \in \spec (R)$. Let $\mathfrak{q} = IJ$, where $I$ and $J$ are ideals of $R$ such that $\mathfrak{q} \subsetneq I, J \subsetneq R$. Then $\Rad I = \Rad J = \mathfrak{p}$.
\end{lemma}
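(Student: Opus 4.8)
The plan is to prove the two containments $\mathfrak{p} \subseteq \Rad I$ and $\Rad I \subseteq \mathfrak{p}$ separately (and then argue symmetrically for $J$), using only the defining property of a primary ideal: whenever $xy \in \mathfrak{q}$ and $x \notin \mathfrak{q}$, one has $y \in \Rad \mathfrak{q} = \mathfrak{p}$.

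First the containment $\mathfrak{p} \subseteq \Rad I$: since $\mathfrak{q} = IJ \subseteq I$ and, likewise, $\mathfrak{q} = IJ \subseteq J$, passing to radicals gives $\mathfrak{p} = \Rad \mathfrak{q} \subseteq \Rad I$ and $\mathfrak{p} \subseteq \Rad J$. For the reverse containment I invoke the strict inclusion $\mathfrak{q} \subsetneq J$ to choose an element $b \in J \setminus \mathfrak{q}$. Then for every $a \in I$ we have $ab \in IJ = \mathfrak{q}$ while $b \notin \mathfrak{q}$, so the primary property forces $a \in \Rad \mathfrak{q} = \mathfrak{p}$; hence $I \subseteq \mathfrak{p}$, and since $\mathfrak{p}$ is prime (in particular radical) this yields $\Rad I \subseteq \Rad \mathfrak{p} = \mathfrak{p}$. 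Combining the two containments gives $\Rad I = \mathfrak{p}$, and the argument for $J$ is verbatim the same after using the other strict inclusion $\mathfrak{q} \subsetneq I$ to pick $a \in I \setminus \mathfrak{q}$ and concluding $J \subseteq \mathfrak{p}$.

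There is essentially no genuine obstacle here; the only subtlety worth flagging is that both strict inclusions $\mathfrak{q} \subsetneq I$ and $\mathfrak{q} \subsetneq J$ are needed, and each is used exactly once, to produce an element lying in one factor but outside $\mathfrak{q}$. In fact the whole lemma can be summarized in one line: from $IJ \subseteq \mathfrak{q}$, $I \not\subseteq \mathfrak{q}$, $J \not\subseteq \mathfrak{q}$, together with the primariness of $\mathfrak{q}$ and the fact that $\Rad\mathfrak{q} = \mathfrak{p}$ is prime, one forces $I \subseteq \mathfrak{p}$ and $J \subseteq \mathfrak{p}$, and the matching lower bound $\mathfrak{p} \subseteq \Rad I, \Rad J$ is automatic from $\mathfrak{q} \subseteq I, J$.
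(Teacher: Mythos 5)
Your proof is correct and uses the same key mechanism as the paper's: pick an element of one factor lying outside $\mathfrak{q}$, multiply by elements of the other factor to land in $\mathfrak{q}=IJ$, and apply the primary property, with the lower bound $\mathfrak{p}\subseteq\Rad I,\Rad J$ coming for free from $IJ\subseteq I,J$. Your version is in fact slightly cleaner than the paper's (which first deduces $\Rad I=\mathfrak{p}$ or $\Rad J=\mathfrak{p}$ from $\Rad I\cap\Rad J=\mathfrak{p}$ and only then runs the primary argument on the remaining factor): you treat both factors symmetrically and obtain the marginally stronger conclusion $I,J\subseteq\mathfrak{p}$.
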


\begin{proof}
	Since $\mathfrak{p} = \Rad \mathfrak{q} = \Rad I \cap \Rad J$, so either $\Rad I = \mathfrak{p}$ or $\Rad J = \mathfrak{p}$, say $\Rad I = \mathfrak{p}$. It remains to prove that $\Rad J = \mathfrak{p}$. Clearly $\mathfrak{p} \subset \Rad J$. Conversely, if $g \in \Rad J$, then there exists some positive integer $s$ such that $g^s\in J$. Since $\mathfrak{q} \subsetneq I$,  there exists an element $f \in I\setminus \mathfrak{q}$. As $fg^s \in IJ = \mathfrak{q}$ and $f \not \in \mathfrak{q}$,  there exists some positive integer $t$ such that $g^{st}\in \mathfrak{q}$ and hence $g \in \Rad \mathfrak{q} = \mathfrak{p}$.
\end{proof}

\smallskip
From now on till the end of the proof of Theorem \ref{5.1}, we fix the following notation.   Let $D$ be a domain with quotient field $K$, and let $R = D[X_1,\ldots, X_n]$ and $S = K[X_1, \ldots, X_n]$ be  polynomial rings in $n \ge 2$ variables $X_1, \ldots, X_n$. They are equipped with the natural $\N$-grading such that $\deg (X_1) = \ldots = \deg(X_n)=1$. We set
\[
R = \bigoplus_{t \ge 0} R_t, \quad \text{and} \quad S = \bigoplus_{t \ge 0} S_t \,,
\]
where $R_t \subset S_t$ are the corresponding $t$-components. Every $f \in S$ can be written uniquely in the form $f = \sum_{i \ge 0} f_i$, where $f_i \in S_i$ for all $i \in \N_0$ and $f_i=0$ for all but finitely many $i \in \N_0$.
We denote by $<$ the  lexicographic order on monomials of $K[X_1,\ldots,X_n]$ with $X_1 > X_2 > \ldots > X_n$. For $f\in K[X_1,\ldots,X_n]$, we denote by $\inn(f)$ the \emph{initial monomial} of $f$ with respect to the order $<$.

The \emph{min-degree} $\mdeg(f)$ of a nonzero polynomial $f \in S$ is the smallest nonnegative integer $d$ such that  $f_d$ is nonzero. We set $\mdeg(0) = +\infty$.  The $\mdeg$ function  satisfies the following two properties for all  $f, g \in S$:
\begin{itemize}
	\item [(i)] $\mdeg(fg) = \mdeg(f) + \mdeg(g)$, and
	
	\item [(ii)] $\mdeg(f+g) \geq \min \,\{ \mdeg(f), \, \mdeg(g) \}$,  with equality if $\mdeg(f) \ne \mdeg(g)$.
\end{itemize}
Next we introduce the minimal degree of an ideal of $R$. Let $I \subset R$ be a nonzero ideal. We define the \emph{min-degree}  $\mdeg(I)$ of $I$ to be the smallest nonnegative integer $d$ such that $I$ contains a polynomial whose min-degree is equal to $d$. We set the min-degree of the zero ideal  equal to $+\infty$.

The next lemma says that  the map
\[
\mdeg~:~ \mathcal I(R) \longrightarrow \N_0 \quad \text{ given by } \quad I \longmapsto \mdeg(I)
\]
is a semigroup homomorphism.

\smallskip
\begin{lemma}\label{5.5}
	For every  $I, J \in \mathcal I (R)$, we have $\mdeg(IJ) = \mdeg(I) + \mdeg(J)$.
\end{lemma}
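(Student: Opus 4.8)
The plan is to establish the two inequalities $\mdeg(IJ) \le \mdeg(I)+\mdeg(J)$ and $\mdeg(IJ) \ge \mdeg(I)+\mdeg(J)$ separately, using only the definition of the min-degree of an ideal together with properties (i) and (ii) of the min-degree function on polynomials; no Gr\"obner-basis machinery is needed for this particular statement. Throughout I would use that $R = D[X_1,\ldots,X_n]$ is a domain (since $D$ is), so that $I$, $J$ and their product $IJ$ are all nonzero ideals and hence all three min-degrees are finite nonnegative integers; moreover, by the very definition of $\mdeg$ of an ideal, the value $\mdeg(I)$ is attained by some $f \in I$ and $\mdeg(J)$ by some $g \in J$.

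For the upper bound I would take such $f$ and $g$. Then $fg \in IJ$, and property (i) gives $\mdeg(fg) = \mdeg(f) + \mdeg(g) = \mdeg(I) + \mdeg(J)$; here one uses that $R$ is a graded domain, so that the product of the nonzero lowest-degree homogeneous components of $f$ and $g$ is a nonzero homogeneous element of the sum degree, and hence is the lowest-degree component of $fg$. Since $IJ$ contains a polynomial of min-degree $\mdeg(I)+\mdeg(J)$, we get $\mdeg(IJ) \le \mdeg(I)+\mdeg(J)$.

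For the lower bound I would take an arbitrary nonzero $h \in IJ$ and write it as a finite sum $h = \sum_{k=1}^{m} f_k g_k$ with $f_k \in I$ and $g_k \in J$. By the definition of the min-degree of an ideal, $\mdeg(f_k) \ge \mdeg(I)$ and $\mdeg(g_k) \ge \mdeg(J)$ for every $k$, so property (i) yields $\mdeg(f_k g_k) \ge \mdeg(I)+\mdeg(J)$ for each $k$. Iterating property (ii) over the finite sum then gives $\mdeg(h) \ge \min_k \mdeg(f_k g_k) \ge \mdeg(I)+\mdeg(J)$. As $h \in IJ$ was arbitrary, $\mdeg(IJ) \ge \mdeg(I)+\mdeg(J)$, and combining with the previous paragraph completes the proof.

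The argument is essentially formal once the two properties of $\mdeg$ are in hand, so there is no serious obstacle; the only points that deserve a word of care are that $\mdeg(I)$ is genuinely realized by an element of $I$ and that $IJ \ne (0)$, both of which are immediate from $R$ being a domain.
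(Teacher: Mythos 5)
Your proof is correct and follows essentially the same route as the paper: the upper bound via the product $fg$ of elements realizing $\mdeg(I)$ and $\mdeg(J)$ together with property (i), and the lower bound by writing an element of $IJ$ as a finite sum $\sum f_k g_k$ and applying property (ii). The only cosmetic difference is that you bound $\mdeg(h)$ for an arbitrary $h\in IJ$ whereas the paper applies the same estimate directly to an $h$ realizing $\mdeg(IJ)$; the substance is identical.
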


\begin{proof}
We set $d = \mdeg(I), \, e = \mdeg(J)$, $m = \mdeg(IJ) $, and we choose $f \in I$ with $\mdeg (f) = d$, \, $g \in J$ with $\mdeg (g) = e$, and $h \in IJ$ such that $\mdeg (h) = m$. By the above property (i), we have $m \leq d + e$. On the other hand, we have $h = f_1g_1 + \ldots + f_sg_s$,  where $f_i\in I$ and $g_i\in J$ for all $i \in [1,s]$. Then the above property (ii) implies that
\[
m=\mdeg(h) \, \geq \, \min \{\mdeg(f_i)+\mdeg(g_i) \colon i \in [1,s] \} \geq d+e \,. \qedhere
\]
\end{proof}

\smallskip
Let $I, J  \in \mathcal I ( R)$. We set $I[i] = \{f_i\, : \, f\in I \}$ and note that $I[i]  \subset R_i$ is a $D$-module. For $i, j\in \N_0$,
\[
I[i]\cdot J[j] = \Big\{ \sum_{s=1}^ka_sb_s \, : \, k\geq 1, \, a_s \in I[i], \, b_s \in J[j] \Big\}
\]
is also a $D$-module.  Let $I_K[i] = \Span_K\{\,I[i]\,\}$, $J_K[j] = \Span_K\{\,J[j]\,\}$ and, for $m\in \N_0$, $(IJ)_K[m] = \Span_K\{\,(IJ)[m]\,\}$. Clearly, $I_K[i] \subset S_i$, $J_K[j] \subset S_j$, and $(IJ)_K[m] \subset S_m$, whence $I_K[i], J_K[j], (IJ)_K[m]$ are finite dimensional $K$-vector spaces.

\smallskip
\begin{lemma}\label{5.6}
Let $I, J \in \mathcal I (R)$ with	 $\mdeg(I) = d$, $\mdeg(J) = e$, and $\mdeg(IJ) = m$.
	\begin{enumerate}
		\item $(IJ)[m] = I[d] \cdot J[e]$.
		
		\item $(IJ)_K[m] = I_K[d] \cdot J_K[e]$.
	\end{enumerate}
\end{lemma}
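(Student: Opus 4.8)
The plan is to reduce both statements to the single observation that, since $d$ and $e$ are the \emph{minimal} degrees occurring among elements of $I$ and $J$, the degree-$(d+e)$ homogeneous component of a product $fg$ with $f\in I$, $g\in J$ receives no contribution from higher components and equals $f_dg_e$. First I would invoke Lemma~\ref{5.5} to get $m=d+e$, and note that $f_i=0$ for all $i<d$ whenever $f\in I$ (symmetrically $g_j=0$ for $j<e$ whenever $g\in J$), which is immediate from $\mdeg(I)=d$ and $\mdeg(J)=e$. Expanding $(fg)_{d+e}=\sum_{i+j=d+e}f_ig_j$, every summand with $i<d$ or $j<e$ vanishes, so only $i=d$, $j=e$ survives and $(fg)_m=f_dg_e$.

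For part~1 I would prove the two inclusions separately. The map $h\mapsto h_m$ is additive on $IJ$, so $(IJ)[m]=\{h_m\colon h\in IJ\}$ is closed under addition. Given $a=f_d\in I[d]$ and $b=g_e\in J[e]$ with $f\in I$, $g\in J$, the computation above yields $ab=(fg)_m\in(IJ)[m]$; since every element of $I[d]\cdot J[e]$ is a finite sum of such products, this gives $I[d]\cdot J[e]\subseteq(IJ)[m]$. Conversely, any $h\in IJ$ can be written $h=\sum_{s=1}^kf_sg_s$ with $f_s\in I$, $g_s\in J$, and then $h_m=\sum_s(f_sg_s)_m=\sum_s(f_s)_d(g_s)_e\in I[d]\cdot J[e]$, giving the reverse inclusion.

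Part~2 then follows formally. Applying $\Span_K(-)$ to part~1 gives $(IJ)_K[m]=\Span_K\big((IJ)[m]\big)=\Span_K\big(I[d]\cdot J[e]\big)$, and it remains to identify $\Span_K\big(I[d]\cdot J[e]\big)$ with $I_K[d]\cdot J_K[e]$. One inclusion is clear since every product $ab$ with $a\in I[d]\subseteq I_K[d]$ and $b\in J[e]\subseteq J_K[e]$ lies in the $K$-space $I_K[d]\cdot J_K[e]$; for the other, a product of $K$-linear combinations of elements of $I[d]$ and of $J[e]$ expands by bilinearity into a $K$-linear combination of products drawn from $I[d]\cdot J[e]$. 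I do not expect a real obstacle here: the only delicate point is the vanishing of the cross terms in $(fg)_{d+e}$, and that is precisely where the minimality of $d$ and $e$ — equivalently, Lemma~\ref{5.5} — is indispensable, since otherwise $f_dg_e$ would be merely one of several contributions to $(fg)_m$.
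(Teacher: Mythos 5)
Your proof is correct and follows essentially the same route as the paper: both reduce part 1 to the identity $(fg)_m=f_dg_e$ (a consequence of $m=d+e$ from Lemma~\ref{5.5} together with the vanishing of all components below the min-degrees of $I$ and $J$), and both deduce part 2 from part 1 by passing to $K$-spans. Your packaging of part 2 via $\Span_K\big(I[d]\cdot J[e]\big)=I_K[d]\cdot J_K[e]$ and bilinearity is slightly tidier than the paper's explicit clearing of denominators, but the content is identical.
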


\begin{proof}

1. Let $a\in (IJ)[m]$. Then there exists $h\in IJ$ such that $h_m = a$, say $h = \sum_{i=1}^kf^{(i)}g^{(i)}$, where $f^{(i)} \in I$ and $g^{(i)} \in J$ for all $i\in [1, k]$. By Lemma~\ref{5.5}, we obtain that  $a = h_m = \sum_{i=1}^kf_d^{(i)}g_e^{(i)}$, whence $a \in I[d] \cdot J[e]$. Conversely, if $a \in I[d] \cdot J[e]$, then $a=\sum_{i=1}^k a_ib_i$ where $a_i\in I[d], b_i\in J[e]$ for all $i\in [1, k]$. Thus,  there exist $f^{(i)}\in I, g^{(i)}\in J$ such that $f^{(i)}_d = a_i$ and $g^{(i)}_e = b_i$ for all $i\in [1, k]$. Hence $h =\sum_{i=1}^kf^{(i)}g^{(i)} \in IJ$ with $h_m = a \in (IJ)[m]$.
	
	2. Let $a \in (IJ)_K[m]$. Then, by definition, $a = \sum_{\nu=1}^{r} (\alpha_\nu/\beta_\nu) h_\nu$, where $r \in \N$, $h_\nu \in (IJ)[m]$, $\alpha_\nu, \beta_\nu \in D$ and $\beta_\nu \not = 0$ for every $\nu \in [1,r]$. By 1.\,, we obtain $h_\nu = \sum_{j_\nu=1}^{s_\nu} a^{(\nu)}_{j_\nu}b^{(\nu)}_{j_\nu}$, where $a^{(\nu)}_{j_\nu} \in I[d]$ and $b^{(\nu)}_{j_\nu} \in J[e]$ for every $j_\nu \in [1,s_\nu]$. Hence,
\[
a = \sum_{\nu=1}^{r} \sum_{j_\nu=1}^{s_\nu} (\alpha_\nu/\beta_\nu) a^{(\nu)}_{j_\nu}b^{(\nu)}_{j_\nu}  \in I_K[d] \cdot J_K[e] \,.
\]
Conversely, let $\sum_{\nu=1}^r a_\nu b_\nu \in I_K[d] \cdot J_K[e]$. Then
\[
a_\nu = \sum_{j_\nu=1}^{s_\nu}(\alpha_{\nu, j_\nu}/\beta_{\nu, j_\nu}) a^{(\nu)}_{j_\nu} \quad \text{ and } \quad b_\nu = \sum_{k_\nu=1}^{t_\nu}(\gamma_{\nu, k_\nu}/\theta_{\nu, k_\nu}) b^{(\nu)}_{k_\nu} \,,
\]
where $a^{(\nu)}_{j_\nu} \in I[d]$, $b^{(\nu)}_{k_\nu} \in J[e]$ and $\alpha_{\nu, j_\nu}, \beta_{\nu, j_\nu}, \gamma_{\nu, k_\nu}, \theta_{\nu, k_\nu} \in D$ with $\beta_{\nu, j_\nu} \not = 0 \not = \theta_{\nu, k_\nu}$ for all $\nu, j_\nu, k_\nu$. If we denote $\beta_\nu= \prod_{j_\nu=1}^{s_\nu}\beta_{\nu, j_\nu}$ and $\theta_\nu= \prod_{k_\nu=1}^{t_\nu}\theta_{\nu, k_\nu}$, then $a_\nu b_\nu$ can be written as
	\[
	a_\nu b_\nu = (1/\beta_\nu \theta_\nu) \sum_{j_\nu=1}^{s_\nu}\sum_{k_\nu=1}^{t_\nu}\tau_{j_\nu, k_\nu}a^{(\nu)}_{j_\nu}b^{(\nu)}_{k_\nu} \,,
	\]
	where all $\tau_{j_\nu, k_\nu} \in D$. Setting $\beta = \prod_{\nu=1}^{r}\beta_\nu$ and $\theta = \prod_{\nu=1}^{r}\theta_\nu$ we obtain that
	\[
	\sum_{\nu=1}^r a_\nu b_\nu = (1/\beta\theta)\sum_{\nu=1}^r \sum_{j_\nu=1}^{s_\nu}\sum_{k_\nu=1}^{t_\nu}\rho_{j_\nu, k_\nu}a^{(\nu)}_{j_\nu}b^{(\nu)}_{k_\nu},
	\]
	where all $\rho_{j_\nu, k_\nu} \in D$. Therefore, $\beta\theta(\sum_{\nu=1}^r a_\nu b_\nu) \in I[d] \cdot J[e]$ and hence, again by using 1.\,, we get
	\[
	\beta\theta(\sum_{\nu=1}^r a_\nu b_\nu) = h_m,
	\]
	where $h \in IJ$. Thus, we obtain that $\sum_{\nu=1}^r a_\nu b_\nu \in (IJ)_K[m]$.
\end{proof}

Before moving further we demonstrate in  simple special cases how our techniques work for studying factorizations in the monoid of nonzero ideals of polynomial rings.

\smallskip
\begin{example}\label{5.7}~

1.  We claim that the ideal $\langle X^2, Y^2 \rangle \subset K[X,Y]$ is an atom of $\mathcal{I}(K[X,Y])$. Assume to the contrary that there are two nonzero proper ideals $I, J$ of $K[X,Y]$ with $\mdeg(I) = d$, $\mdeg(J) = e$ and
\[
\langle X^2, Y^2 \rangle = IJ.
\]
By Lemma~\ref{5.6} we obtain
\begin{equation}\label{ex1}
	\Span_K\{X^2, Y^2\} = I_K[d] \cdot J_K[e],
\end{equation}
and we have  $d+e = 2$ by Lemma~\ref{5.5}. Furthermore,  we have $\Rad I = \Rad J = \langle X, Y \rangle$, whence $d = e = 1$. Clearly $\dim_KI[d]\geq 1$ and $\dim_K J[e]\geq 1$. Assume to the contrary that one of these dimensions equals one, say   $I_K[d] = \Span_K\{f\}$ and $J_K[e] = \Span_K\{g_1,\ldots, g_s\}$. Then $X^2 = \sum_{j=1}^{s}\alpha_jfg_j$ where $\alpha_j \in K$ for all $j\in [1,s]$. This implies that $f = \alpha X$ for some $\alpha \in K^\times$, which is not possible since $Y^2 \in I_K[d]\cdot J_K[e]$. Therefore,  $\dim_K I_K[d] \ge 2$ and $\dim_KJ[e] \ge 2$. Since $I_K[d], J_K[e] \subset \Span_K\{X,Y\}$, it follows that  $I_K[d] = J_K[e] = \Span_K\{X,Y\}$. This implies that $I_K[d] \cdot J_K[e] = \Span_K\{X^2,XY, Y^2\}$, a contradiction to (\ref{ex1}).
		
\smallskip
2. We claim that  the ideal $\langle X^3+Y^3, X^2Y, XY^2 \rangle \subset K[X,Y]$ is an atom of $\mathcal{I}(K[X,Y])$. Assume to the contrary that there are   two nonzero proper ideals $I, J$ of $K[X,Y]$ with $\mdeg(I) = d$, $\mdeg(J) = e$ and
		\[
		\langle X^3+Y^3, X^2Y, XY^2 \rangle = IJ.
		\]
By Lemma~\ref{5.6} we obtain
\begin{equation}\label{ex2}
	\Span_K\{X^3+Y^3, X^2Y, XY^2\} = I_K[d] \cdot J_K[e],
\end{equation}
and we have	 $d+e = 3$ by Lemma~\ref{5.5}. Since $\langle X, Y \rangle^4 \subset \langle X^3+Y^3, X^2Y, XY^2 \rangle$, we infer that $\Rad I = \Rad J = \langle X, Y \rangle$. Thus,  $d, e \geq 1$, and after renumbering if necessary we suppose that  $d=1$ and $e=2$. Clearly $\dim_KI[d]\geq 1$ and $\dim_KJ[e]\geq 1$.
Assume to the contrary that one of these dimensions equals one, say
$I_K[d] = \Span_K\{f\}$ and $J_K[e] = \Span_K\{g_1,\ldots, g_s\}$. Then $X^2Y = \sum_{j=1}^{s}\alpha_jfg_j$ where $\alpha_j \in K$ for all $j\in [1,s]$. This implies that $f = \alpha X$ or $f=\alpha' Y$ for some $\alpha, \alpha' \in K^\times$, which is not possible since $X^3+Y^3 \in I_K[d]\cdot J_K[e]$. Therefore,  $\dim_K I_K[d] \ge 2$ and $\dim_KJ[e] \ge 2$. Since $I_K[d] \subset \Span_K\{X,Y\}$, it follows that $I_K[d] = \Span_K\{X,Y\}$. For every $i \in [1,s]$, we have $Xg_i \in I_K[d] \cdot J_K[e]$ and thus,  by (\ref{ex2}),
		\[
		Xg_i = \beta_{i,1}(X^3+Y^3) + \beta_{i,2} X^2Y + \beta_{i,3} XY^2
		\]
		where $\beta_{i,j} \in K$ for every $j\in [1,3]$. This implies that $\beta_{i,1} = 0$ and $g_i = \beta_{i,2}XY+\beta_{i,3} Y^2$. Similarly, $Yg_i \in I_K[d] \cdot J_K[e]$ yields
		\[
		Yg_i = \gamma_{i,1}(X^3+Y^3) + \gamma_{i,2}X^2Y + \gamma_{i,3}XY^2
		\]
		where $\gamma_{i,j}\in K$ for every $j\in [1,3]$. This implies that $\gamma_{i,1} = 0$ and $g_i = \gamma_{i,2} X^2+\gamma_{i,3} XY$. Hence $\beta_{i,3}=\gamma_{i,2}=0$ and $g_i = \beta_{i,2}XY$. Therefore,  $J_K[e] = \Span_K\{XY\}$, a contradiction to $\dim_K J_K[e]\geq 2$.
		
3. The following equation
		\[
		\langle X^2, Y^2 \rangle \langle X^3+Y^3, X^2Y, XY^2 \rangle = \langle X, Y \rangle^5
		\]
involves only atoms of $\mathcal{I}(K[X,Y])$, whence it shows that $2, 5 \in \mathsf L_{\mathcal I (K[X,Y]) } ( \langle X, Y \rangle^5 )$.
\end{example}

\smallskip
Example~\ref{5.7}.1 was already settled in \cite[Proposition~4.6]{He-La-Re21}, but our techniques allow us to study  polynomial ideals over   domains whose semigroup of nonzero ideals is a BF-monoid (see Proposition \ref{5.10}).
For $m \in \N_0$, let $\mathcal M_{m;1,2}$ denote the set of all monomials of the form $X_1^rX_2^s$ with $r+s=m$ and $r,s \in \N_0$.

\smallskip
\begin{lemma}\label{5.8}
The  ideals
\[
\mathfrak c'(X_1,X_2) = \langle X_1^3+X_2^3, X_1^2X_2, X_1X_2^2 \rangle \quad \text{and} \quad
		\mathfrak a(X_1,X_2) = \langle \{X_1^m, X_2^m\} \cup \mathcal N  \rangle \,,
\]
where $m \in \N$ and  $\mathcal N \subset \mathcal M_{m;1,2}$ is any subset, are $\langle X_1, X_2 \rangle$-primary in $R$.
In particular, if $\mathfrak{c}'(X_1,X_2) = IJ$ or $\mathfrak{a}(X_1,X_2) = IJ$, where $I$ and $J$ are ideals of $R$ such that $\mathfrak{c}'(X_1,X_2) \subsetneq I, J \subsetneq R$ or $\mathfrak{a} (X_1,X_2) \subsetneq I, J \subsetneq R$, then $\mdeg (I) \ge 1$ and $\mdeg(J) \ge 1$.
\end{lemma}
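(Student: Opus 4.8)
The plan is to prove the primariness assertions by putting a grading on $R$ that separates $X_1,X_2$ from the remaining variables. Write $A=D[X_3,\ldots,X_n]$, so that $R=A[X_1,X_2]=\bigoplus_{d\ge 0}R_d$, where $R_d$ is the free $A$-module with basis the monomials $X_1^iX_2^j$ with $i+j=d$; for this grading $\langle X_1,X_2\rangle=R_{\ge 1}=\bigoplus_{d\ge 1}R_d$, and this ideal is prime because $R/\langle X_1,X_2\rangle\cong A$ is a domain. Both $\mathfrak{c}'(X_1,X_2)$ and $\mathfrak{a}(X_1,X_2)$ are homogeneous for this grading, since all their listed generators are. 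Writing $\mathfrak{q}$ for either of the two ideals and $\mathfrak{q}_d=\mathfrak{q}\cap R_d$, the heart of the argument will be the claim that each quotient $R_d/\mathfrak{q}_d$ is a torsion-free $A$-module; granting this, $\langle X_1,X_2\rangle$-primariness follows formally.

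First I would compute the radicals. For $\mathfrak{a}(X_1,X_2)$ the generators are monomials of degree $m\ge 1$ in $X_1,X_2$ and include $X_1^m$ and $X_2^m$, so $\langle X_1,X_2\rangle\subseteq\Rad\mathfrak{a}\subseteq\Rad\langle X_1,X_2\rangle=\langle X_1,X_2\rangle$. For $\mathfrak{c}'(X_1,X_2)$ the only computation is to note $X_1^4=X_1(X_1^3+X_2^3)-X_2(X_1X_2^2)$, $X_2^4=X_2(X_1^3+X_2^3)-X_1(X_1^2X_2)$, and $X_1^3X_2,X_1^2X_2^2,X_1X_2^3\in\mathfrak{c}'$ trivially, so that $\langle X_1,X_2\rangle^4\subseteq\mathfrak{c}'\subseteq\langle X_1,X_2\rangle$ and hence $\Rad\mathfrak{c}'=\langle X_1,X_2\rangle$. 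So in both cases the radical is the prime ideal $\langle X_1,X_2\rangle$, and it remains to prove that $\mathfrak{q}$ is primary.

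For the torsion-freeness of the graded pieces: when $\mathfrak{q}=\mathfrak{a}(X_1,X_2)$, each $\mathfrak{q}_d$ is the $A$-span of the set of degree-$d$ monomials in $X_1,X_2$ divisible by some generator, i.e. a coordinate submodule of the free module $R_d$ with respect to its monomial basis, so $R_d/\mathfrak{q}_d$ is again free. When $\mathfrak{q}=\mathfrak{c}'(X_1,X_2)$, the inclusion $\langle X_1,X_2\rangle^4\subseteq\mathfrak{c}'$ gives $\mathfrak{q}_d=R_d$ for $d\ge 4$ and $\mathfrak{q}_d=0$ for $d\le 2$, while for $d=3$ one checks that $\{X_1^3+X_2^3,\,X_1^2X_2,\,X_1X_2^2,\,X_1^3\}$ is an $A$-basis of $R_3$, whence $R_3/\mathfrak{q}_3\cong A$; in every case $R_d/\mathfrak{q}_d$ is free, hence torsion-free. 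Now I would invoke the standard criterion that a homogeneous ideal $\mathfrak{q}$ is primary as soon as the implication ``$f,g$ homogeneous, $fg\in\mathfrak{q}$, $g\notin\Rad\mathfrak{q}$ $\Rightarrow$ $f\in\mathfrak{q}$'' holds (this can also be checked by hand with a leading-term argument). A homogeneous $g\notin\Rad\mathfrak{q}=R_{\ge 1}$ lies in $R_0=A\setminus\{0\}$; for homogeneous $f\in R_d$ we then have $gf\in\mathfrak{q}\cap R_d=\mathfrak{q}_d$, and torsion-freeness of $R_d/\mathfrak{q}_d$ forces $f\in\mathfrak{q}_d\subseteq\mathfrak{q}$. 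Hence $\mathfrak{q}$ is $\langle X_1,X_2\rangle$-primary.

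Finally, for the ``in particular'' clause, suppose $\mathfrak{q}=IJ$ with $\mathfrak{q}\subsetneq I,J\subsetneq R$, where $\mathfrak{q}$ is one of the two ideals, now known to be $\langle X_1,X_2\rangle$-primary. Lemma~\ref{5.4} yields $\Rad I=\Rad J=\langle X_1,X_2\rangle$; since $I\subseteq\Rad I=\langle X_1,X_2\rangle$ and every nonzero element of $\langle X_1,X_2\rangle$ has vanishing constant term and hence min-degree at least $1$, we conclude $\mdeg(I)\ge 1$, and symmetrically $\mdeg(J)\ge 1$. I expect the only real obstacle to be pinning down the correct property of the graded pieces: torsion-freeness over $A$ is precisely what lets the graded primariness criterion handle the non-monomial ideal $\mathfrak{c}'$ on the same footing as the monomial ideal $\mathfrak{a}$, and it is here that the hypothesis that $D$ (hence $A$) is a domain enters in an essential way.
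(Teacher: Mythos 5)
Your proof is correct, but it takes a genuinely different route from the paper's. The paper first extends $\mathfrak c'$ and $\mathfrak a$ to $K[X_1,X_2]$, where primariness is immediate because each ideal contains a power of the maximal ideal $\langle X_1,X_2\rangle K[X_1,X_2]$; it then contracts back to $D[X_1,X_2]$ (the substantive step being the check, by clearing denominators against the explicit generators, that the contraction does not enlarge the ideal) and finally adjoins the remaining indeterminates one at a time using the fact that a $\mathfrak p$-primary ideal of $R$ extends to a $\mathfrak p[X]$-primary ideal of $R[X]$. You instead grade $R=A[X_1,X_2]$, $A=D[X_3,\dots,X_n]$, by total degree in $X_1,X_2$, note that both ideals are homogeneous with $\Rad\mathfrak q=\langle X_1,X_2\rangle=R_{\ge 1}$ prime, show each graded quotient $R_d/\mathfrak q_d$ is a free (hence torsion-free) $A$-module, and conclude via the classical criterion that primariness of a homogeneous ideal may be tested on homogeneous elements. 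Your computations check out: the identities $X_1^4=X_1(X_1^3+X_2^3)-X_2(X_1X_2^2)$ and $X_2^4=X_2(X_1^3+X_2^3)-X_1(X_1^2X_2)$ do give $\langle X_1,X_2\rangle^4\subset\mathfrak c'$, the set $\{X_1^3+X_2^3,X_1^2X_2,X_1X_2^2,X_1^3\}$ is indeed an $A$-basis of $R_3$, the coordinate-submodule description of $\mathfrak a_d$ is right, and the deduction of the ``in particular'' clause from Lemma~\ref{5.4} agrees with the paper. What your route buys is uniformity in $n$ (no induction, no passage to the quotient field) and a transparent localization of where the hypothesis that $D$ is a domain enters; what it costs is reliance on the graded primariness criterion, which is standard (Zariski--Samuel, Vol.~II, Ch.~VII, \S 2, Thm.~9) but whose leading-term proof has a small twist -- one must first replace $g$ by a power of its part lying outside $\mathfrak p$ before comparing extreme homogeneous components -- so you should either cite it precisely or spell that argument out.
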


\begin{proof}
We set $\mathfrak{c}' := \mathfrak{c}'(X_1,X_2)$ and $\mathfrak{a} := \mathfrak{a}(X_1,X_2)$. It is a well known fact that if an ideal $\mathfrak{q} \subset R$ is $\mathfrak{p}$-primary, then its extension $\mathfrak{q}[X] \subset R[X]$ is $\mathfrak{p}[X]$-primary, cf. \cite[Exercise 4.7 (iii)]{At-Ma69}. We proceed  by induction on the number of indeterminates $n \ge 2$.
	
	Let $n=2$. Then the ideal extensions $\mathfrak c'K[X_1,X_2]$ and $\mathfrak aK[X_1,X_2]$ are $\langle X_1, X_2 \rangle K[X_1,X_2]$-primary, because $\langle X_1, X_2 \rangle K[X_1,X_2]^4 \subset \mathfrak c'K[X_1,X_2]$, $\langle X_1, X_2 \rangle K[X_1,X_2]^{2m-1} \subset \mathfrak a K[X_1,X_2]$ and $\langle X_1, X_2 \rangle K[X_1,X_2]$ is a maximal ideal. Therefore the ideal contractions $\mathfrak c'K[X_1,X_2]\cap D[X_1,X_2]$ and $\mathfrak aK[X_1,X_2]\cap D[X_1,X_2]$ are $\langle X_1, X_2 \rangle K[X_1,X_2]\cap D[X_1,X_2]$-primary. If we prove that
	\[
	\langle X_1, X_2 \rangle K[X_1,X_2]\cap D[X_1,X_2] \subset \langle X_1, X_2 \rangle, \mathfrak c'K[X_1,X_2]\cap D[X_1,X_2] \subset \mathfrak c' \text{ and } \mathfrak aK[X_1,X_2]\cap D[X_1,X_2] \subset \mathfrak a,
	\]
	then we are done. Let $f/\alpha = g \in \langle X_1, X_2 \rangle K[X_1,X_2]\cap D[X_1,X_2]$, where $f\in \langle X_1, X_2 \rangle, g\in D[X_1,X_2]$ and $\alpha \in D^{\bullet}$. Then $\alpha g\in \langle X_1, X_2 \rangle$ and hence $g\in \langle X_1, X_2 \rangle$.
	
	Let now $f/\alpha = g \in \mathfrak c'K[X_1,X_2]\cap D[X_1,X_2]$, where $f\in \mathfrak c', g\in D[X_1,X_2]$ and $\alpha \in D^{\bullet}$. Then $\alpha g\in \mathfrak c'$ and it only requires to show that $g\in \mathfrak c'$. But $\alpha g\in \mathfrak c'$ implies that $g = \beta(X_1^3+X_2^3)+g_{21}X_1^2X_2+g_{12}X_1X_2^2 + g'$, where $\beta,g_{21},g_{12}\in D$ and $g'\in D[X_1,X_2]$ with $\mdeg(g') = 4$. Since $g'\in \langle X_1, X_2 \rangle^4 \subset \mathfrak c'$, so $g\in \mathfrak c'$.
	
	Let now $f/\alpha = g \in \mathfrak aK[X_1,X_2]\cap D[X_1,X_2]$, where $f\in \mathfrak a, g\in D[X_1,X_2]$ and $\alpha \in D^{\bullet}$. Then $\alpha g\in \mathfrak a$ and hence by using similar calculations as above we get $g\in \mathfrak a$.
	
	Assume now the result is true for $n\geq 2$ and consider the ring $R[X_{n+1}]$. So $\mathfrak{c}'[X_{n+1}]$ and $\mathfrak{a}[X_{n+1}]$ are $\langle X_1, X_2 \rangle [X_{n+1}]$-primary ideals of $R[X_{n+1}]$, but in $R[X_{n+1}]$ we have
	\[
	\mathfrak{c}'[X_{n+1}] = \mathfrak{c}', \quad \mathfrak{a}[X_{n+1}] = \mathfrak{a} \quad \text{ and } \quad \langle X_1, X_2 \rangle [X_{n+1}] = \langle X_1, X_2 \rangle.
	\]
	Thus $\mathfrak a$ and $\mathfrak c'$ are $\langle X_1, X_2 \rangle$-primary. The in particular statement  follows by Lemma~\ref{5.4}.
\end{proof}

\smallskip
\begin{lemma}\label{5.9}
	Let $V \subset S_d$, with $d \in \N$,  be a $K$-vector subspace of dimension $\dim_K (V) = s$. Then there are linearly independent elements $f_1,\ldots, f_s \in V$ such that $\inn(f_1) > \ldots > \inn(f_s)$.
\end{lemma}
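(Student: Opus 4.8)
The plan is to argue by induction on $s = \dim_K(V)$, performing what amounts to Gaussian elimination with respect to the monomial order $<$. For $s = 0$ there is nothing to prove, so assume $s \ge 1$. The first step is to note that $S_d$ is a finite-dimensional $K$-vector space spanned by the finitely many monomials of degree $d$; hence the set $\{\inn(f) \colon f \in V \setminus \{0\}\}$ is a nonempty finite set of monomials, and since $<$ is a total order it has a largest element. I would choose $f_1 \in V \setminus \{0\}$ whose initial monomial $\inn(f_1)$ realizes this maximum.

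Next I would introduce $W = \{f \in V \colon f = 0 \text{ or } \inn(f) < \inn(f_1)\}$ and verify that $W$ is a $K$-subspace of $V$. This relies only on two elementary facts about initial monomials: $\inn(cf) = \inn(f)$ for every $c \in K^{\times}$, and $\inn(f+g) \le \max\{\inn(f), \inn(g)\}$ whenever $f$, $g$ and $f+g$ are all nonzero (leading terms either add without cancelling, giving one of the two, or cancel, giving something strictly smaller). I would then show that $V = Kf_1 \oplus W$. Indeed, given $0 \ne f \in V$, maximality forces $\inn(f) \le \inn(f_1)$; if $\inn(f) < \inn(f_1)$ then $f \in W$, while if $\inn(f) = \inn(f_1)$ then subtracting the appropriate scalar multiple of $f_1$ cancels the leading term and lands in $W$, so $f \in Kf_1 + W$. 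Since $cf_1 \in W$ with $c \ne 0$ would force $\inn(f_1) < \inn(f_1)$, we get $Kf_1 \cap W = \{0\}$, whence the sum is direct and $\dim_K W = s - 1$.

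Finally, the induction hypothesis applied to $W$ produces linearly independent $f_2, \ldots, f_s \in W$ with $\inn(f_2) > \cdots > \inn(f_s)$. Every $f_i$ with $i \ge 2$ lies in $W$, so $\inn(f_1) > \inn(f_2)$, and the list $f_1, f_2, \ldots, f_s$ is linearly independent because $\dim_K W = s-1$ forces $W = \Span_K(f_2, \ldots, f_s)$ while $f_1 \notin W$. This gives the desired chain $\inn(f_1) > \inn(f_2) > \cdots > \inn(f_s)$.

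The only mildly delicate point is the verification that $W$ is a subspace and that $V = Kf_1 \oplus W$; this is precisely the monomial-order version of row reduction, and once it is in place the rest is bookkeeping. I expect no serious obstacle beyond being careful about the case distinctions when leading terms cancel.
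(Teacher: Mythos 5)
Your proof is correct and is in substance the same argument as the paper's, which simply writes the chosen basis as the rows of a matrix and invokes Gaussian elimination to row echelon form; your inductive formulation via the decomposition $V = Kf_1 \oplus W$ just unfolds that elimination step by step. All the verifications you flag (that $W$ is a subspace, that the sum is direct) go through exactly as you describe, using $\inn(f+g) \le \max\{\inn(f),\inn(g)\}$ and $\inn(cf)=\inn(f)$ for $c \in K^{\times}$.
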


\begin{proof}
	Let $g_1,\ldots, g_s$ be linearly independent elements of $V$. If we consider the canonical isomorphism $S_d \cong K^{\binom{d+n-1}{d}}$, then $g_1,\ldots, g_s$ will represent the corresponding linearly independent vectors of $V \subset K^{\binom{d+n-1}{d}}$. Let $A$ be an $s\times \binom{d+n-1}{d}$ matrix whose rows are $g_1,\ldots, g_s$. Now we apply the Gaussian elimination on $A$ and reduce it to row echelon form. The resulting matrix is represented by rows, say $f_1,\ldots, f_s \in V$, which are again linearly independent elements and satisfy our requirement $\inn(f_1) > \ldots > \inn(f_s)$.
\end{proof}

\smallskip
\begin{proposition} \label{5.10}
Suppose that $\mathcal I (R)$ is a \BF-monoid. Then the following ideals are atoms of $\mathcal I (R)$.
	\begin{enumerate}
		\item $\mathfrak{b}_i(X_1,X_2)$ for every $i\in \N$,
		
		\item $\mathfrak{c}_{2i+1}(X_1,X_2)$ for every $i\in \N$,
		
		\item $\mathfrak{c}_{2i}(X_1,X_2)$ for every $i\in \N_{\geq 3}$, and
		
		\item $\mathfrak c'(X_1,X_2) = \langle X_1^3+X_2^3, X_1^2X_2, X_1X_2^2 \rangle$.
	\end{enumerate}
\end{proposition}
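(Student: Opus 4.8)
The plan is to treat the four families uniformly. Let $\mathfrak q$ denote one of the listed ideals and write $m$ for the common degree of its homogeneous generators in $X_1,X_2$ (so $m=i$, $2i{+}1$, $2i$, $3$ respectively). Put $K=\mathsf q(D)$ and $S=K[X_1,\dots,X_n]$. Since $\mathcal I(R)$ is unit-cancellative and reduced, $\mathfrak q$ (a proper ideal, hence a nonunit) is an atom as soon as we derive a contradiction from a factorization $\mathfrak q=IJ$ with $I,J\in\mathcal I(R)\setminus\{R\}$; moreover we may assume $I,J\ne\mathfrak q$ (else $\mathfrak q=\mathfrak q J$ forces $J=R$), so $\mathfrak q\subsetneq I\subsetneq R$ and $\mathfrak q\subsetneq J\subsetneq R$. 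By Lemma~\ref{5.8}, $\mathfrak q$ is $\langle X_1,X_2\rangle$-primary, so $d:=\mdeg(I)\ge1$ and $e:=\mdeg(J)\ge1$, and by Lemma~\ref{5.5} we have $d+e=\mdeg(\mathfrak q)=m$. By Lemma~\ref{5.6}, $(\mathfrak q)_K[m]=V\cdot W$ with $V:=I_K[d]\subseteq S_d$ and $W:=J_K[e]\subseteq S_e$ nonzero $K$-subspaces; since $\mathfrak q$ is generated in degree $m$, the space $V_{\mathfrak q}:=(\mathfrak q)_K[m]$ is the $K$-span of the generators of $\mathfrak q$, so $V_{\mathfrak q}\subseteq K[X_1,X_2]_m$ and $\dim_K V_{\mathfrak q}$ equals their number.

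Next I would push $V$ and $W$ into $K[X_1,X_2]$. Fixing $0\ne g_0\in W$ and $0\ne f_0\in V$, for every $f\in V$ one has $fg_0\in V_{\mathfrak q}\subseteq K[X_1,X_2]$, and since $\deg_{X_j}$ is additive on the domain $S$, $\deg_{X_j}(f)+\deg_{X_j}(g_0)=\deg_{X_j}(fg_0)=0$ with both summands $\ge0$ for $j\ge3$; hence $V\subseteq K[X_1,X_2]_d$, and symmetrically (via $f_0$) $W\subseteq K[X_1,X_2]_e$. Now fix the lexicographic order $X_1>X_2>\dots$ and, by Lemma~\ref{5.9}, choose bases $f_1,\dots,f_s$ of $V$ and $g_1,\dots,g_t$ of $W$ with strictly decreasing initial monomials $\inn(f_a)=X_1^{d-v_a}X_2^{v_a}$ and $\inn(g_b)=X_1^{e-q_b}X_2^{q_b}$, where $0\le v_1<\dots<v_s\le d$ and $0\le q_1<\dots<q_t\le e$. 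As $V_{\mathfrak q}=\Span_K\{f_ag_b\}$ and initial monomials are multiplicative, the largest monomial occurring in $V_{\mathfrak q}$ is $\inn(f_1)\inn(g_1)$, which is $X_1^m$ in every case; by degree this forces $\inn(f_1)=X_1^d$, $\inn(g_1)=X_1^e$, i.e. $v_1=q_1=0$. More generally $\inn(f_ag_b)=X_1^{m-v_a-q_b}X_2^{v_a+q_b}$ occurs in $V_{\mathfrak q}$, so $v_a+q_b$ lies in the set $E$ of $X_2$-exponents of the monomials occurring in $V_{\mathfrak q}$. A short divisor computation (using $X_1^m,X_2^m\in V_{\mathfrak q}$, respectively the monomial generators of $\mathfrak c'$) rules out $\dim V=1$ and $\dim W=1$; thus $s,t\ge2$ and $v_2,q_2\ge1$.

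The cases $\mathfrak b_i$, $\mathfrak c_{2i+1}$, $\mathfrak c'$ then close at once. For $\mathfrak b_i$, $E=\{0,i\}$, yet $\inn(f_1g_2)=X_1^{i-q_2}X_2^{q_2}$ has $1\le q_2\le e<i$, so $v_1+q_2=q_2\notin E$ --- contradiction. For $\mathfrak c_{2i+1}$, $E\cap[1,\infty)=\{1,3,\dots,2i+1\}$ is all odd, so $v_2,q_2$ are odd and $v_2+q_2\in E$ is even and $\ge2$ --- impossible. For $\mathfrak c'$, $m=3$ and $\{d,e\}=\{1,2\}$, say $d=1$; then $V$ is a nonzero subspace of $K[X_1,X_2]_1$ with $\dim V\ne1$, so $V=\Span_K\{X_1,X_2\}$ and $W\subseteq K[X_1,X_2]_2$, which is precisely the configuration of Example~\ref{5.7}.2, and the argument given there applies verbatim.

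The one genuinely hard case is $\mathfrak c_{2i}$ with $i\ge3$; here the initial-monomial data alone is insufficient, as $\mathfrak c_4$ shows (it is not an atom). Now $E=\{0,1,2,4,6,\dots,2i\}$ and $\dim_K V_{\mathfrak c_{2i}}=i+2$. Since $\dim(V\cdot W)\le st$, the case $s=t=2$ would force $i+2\le4$, against $i\ge3$; so $s\ge3$ or $t\ge3$. Using this together with ``$v_a+q_b\in E$'' one checks that $v_2=1$ is impossible (it forces $q_2=1$, and then either $v_3+q_2$ or $v_2+q_3$ is an odd number $\ge3$, hence not in $E$), and symmetrically $q_2\ne1$; hence $v_2,q_2\ge2$ and every $v_a,q_b$ is even. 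What remains is a linear-algebra statement: $K$-subspaces $V,W$ of binary forms whose product has all initial-monomial $X_2$-exponents even cannot contain the monomial $X_1^{2i-1}X_2$ of odd $X_2$-exponent while omitting every monomial of odd $X_2$-exponent $\ge3$. I would carry this out by passing to the $\mathbb Z/2$-grading of $K[X_1,X_2]$ by the parity of the $X_2$-exponent (equivalently, dehomogenizing and writing each form as $p(t^2)+t\,q(t^2)$) and distinguishing according to whether $\dim V$, $\dim W$ equal $2$ or are larger, reading off the contradiction from the ``gap'' in $E$. This parity/gap analysis is the technical core, and I expect it to be the main obstacle; everything else is routine once Lemmas~\ref{5.5}, \ref{5.6}, \ref{5.8} and~\ref{5.9} are in hand.
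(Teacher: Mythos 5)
Your framework is the same as the paper's: reduce to $\mathfrak q=IJ$ with $\mathfrak q\subsetneq I,J\subsetneq R$, get $d,e\ge1$ from Lemma~\ref{5.8}, $d+e=m$ from Lemma~\ref{5.5}, pass to $I_K[d]\cdot J_K[e]=(\mathfrak q)_K[m]$ via Lemma~\ref{5.6}, and compare initial monomials of bases produced by Lemma~\ref{5.9}. Several of your steps are in fact cleaner than the paper's: the degree argument that forces $V,W\subset K[X_1,X_2]$ (the paper only waves at the variables $X_3,\dots,X_n$), the uniform use of the exponent set $E$ with the relation $v_a+q_b\in E$, and the one-line parity contradictions for $\mathfrak b_i$ and $\mathfrak c_{2i+1}$. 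Case 4 via Example~\ref{5.7}.2 is legitimate, and your reduction of case 3 to ``all $v_a,q_b$ even'' (using $s\ge3$ or $t\ge3$ from $\dim V_{\mathfrak c_{2i}}=i+2>4$) coincides with the paper's.

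The genuine gap is the end of case 3, which you explicitly leave as a plan. Your proposed route -- a $\mathbb Z/2$-grading of $V$ and $W$ by parity of the $X_2$-exponent -- does not obviously work as stated, because only the \emph{initial} monomials of the $f_a,g_b$ are controlled: an element such as $X_1^d+X_1^{d-1}X_2$ is compatible with all $v_a$ even, so $V$ need not decompose along that grading, and the subsequent ``case distinction on dimensions'' is not specified. The missing step is, however, immediate from the machinery you already have, and this is exactly how the paper closes it: write $X_1^{2i-1}X_2=\sum_{u,v}\alpha_{uv}f_ug_v$. The monomial $X_1^{2i}$ occurs in $f_1g_1$ and, since every monomial of $f_ug_v$ is $\le\inn(f_ug_v)=X_1^{2i-v_u-q_v}X_2^{v_u+q_v}<X_1^{2i}$ for $(u,v)\ne(1,1)$, it occurs in no other product; so if $\alpha_{11}\ne0$ the right-hand side contains $X_1^{2i}$, a contradiction. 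If $\alpha_{11}=0$, then
\[
\inn\Bigl(\sum_{(u,v)\ne(1,1)}\alpha_{uv}f_ug_v\Bigr)\le\max_{(u,v)\ne(1,1)}\inn(f_ug_v)=X_1^{2i-c}X_2^{c}\quad\text{for some even }c\ge2,
\]
which is lexicographically smaller than $X_1^{2i-1}X_2$, again a contradiction. No gap in $E$ and no grading of the spaces is needed. With this two-line finish inserted, your proof is complete and essentially identical in method to the paper's.
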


\begin{proof}
For all $i \in \N$, we use the abbreviations $\mathfrak{b}_i := \mathfrak{b}_i(X_1,X_2), \mathfrak{c}_{2i+1} := \mathfrak{c}_{2i+1}(X_1,X_2), \mathfrak{c}_{2i}:= \mathfrak{c}_{2i}(X_1,X_2)$, and $\mathfrak{c}' := \mathfrak c'(X_1,X_2)$. In order to show that an ideal $\mathfrak a$ is an atom of $\mathcal I (R)$, it suffices to show that there are no ideals $I, J \in \mathcal I (R)$ with $\mathfrak a \subsetneq I, J \subsetneq R$ such that $\mathfrak  a = I J$, because $\mathcal I (R)$ is a reduced unit-cancellative semigroup.
	
	1. Let $i\in \N$ and assume to the contrary that $\mathfrak{b}_i = IJ$ with $\mathfrak{b}_i \subsetneq I, J \subsetneq R$ such that $\mdeg(I) = d$ and $\mdeg(J) = e$. Then, by Lemma~\ref{5.8}, we have $d\geq 1$ and $e \geq 1$, and by Lemma~\ref{5.6}.2 we obtain that
	\begin{equation}\label{atom1}
	\Span_K\{X_1^i, X_2^i\} = I_K[d] \cdot J_K[e].
	\end{equation}
	Note that $I_K[d]$ and $J_K[e]$ are finite vector spaces and by Equation~\ref{atom1} it is not possible that $\dim_K I_K[d] = \dim_K J_K[e] = 1$. Thus, without loss of generality, we may assume that $\dim_K I_K[d] \geq 2$ and $\dim_K J_K[e] \geq 1$. Let $f_1, f_2 \in I_K[d]$ be linearly independent and $g \in J_K[e]$ any nonzero element.
	
By Lemma~\ref{5.9}, we may assume that $\inn(f_1) > \inn(f_2)$. If $\inn(f_i)$ does not equal $X_1^d$ or $X_2^d$, then $\inn(f_ig) \, (=\inn(f_i)\inn(g))$ does not equal $X_1^i$ or $X_2^i$, which is not possible by (\ref{atom1}). This means we must have $\inn(f_1) = X_1^d$ and $\inn(f_2) = X_2^d$. If $\inn(g) = X_1^e$, then $\inn(f_2g) = X_1^eX_2^d$, a contradiction to (\ref{atom1}). If  $\inn(g) \ne  X_1^e$, say $\inn(g) = X_1^{a_1} \cdot \ldots \cdot X_n^{a_n}$ such that $\sum_{j=1}^n a_i=e$ with $a_1<e$ and $a_j\geq 1$ for some $j\in [2, n]$, then $\inn(f_1g)$ is divisible by $X_1X_j$, again a contradiction to (\ref{atom1}).
	
\smallskip
	2. Let $i\in \N$ and assume to the contrary that $\mathfrak{c}_{2i+1} = IJ$ with $\mathfrak{c}_{2i+1} \subsetneq I, J \subsetneq R$ such that $\mdeg(I) = d$ and $\mdeg(J) = e$. Then, by Lemma~\ref{5.8}, we have $d\geq 1$ and $e \geq 1$, by Lemma~\ref{5.5}, we have  $2i+1= d+e$, and  Lemma~\ref{5.6}.2 implies that
	\begin{equation}\label{atom2}
	\Span_K\{\{X_1^{2i+1},X_1^{2i}X_2\}\cup \{X_1^{2i-j}X_2^{j+1}~:~ j\in[1,2i+1] \text{ and } j \equiv 0 \mod 2 \}\} = I_K[d] \cdot J_K[e].
	\end{equation}
	We claim that $\dim_KI[d] \ge 2$ and $\dim_KJ[e] \ge 2$. Indeed, if $I_K[d] = \Span_K\{f\}$ and $J_K[e] = \Span_K\{g_1,\ldots, g_s\}$, then $I_K[d] \cdot J_K[e] = \Span_K\{fg_1,\ldots, fg_s\}$. From (\ref{atom2}), we get
	\[
	X_1^{2i+1} = \sum_{j=1}^{s} \alpha_j fg_j, \text{ where } \alpha_j\in K \text{ for all } j\in [1,s] \,,
	\]
	and  we deduce $f =  \alpha X_1^d$ for some $\alpha \in K^\times$. Hence $X_2^{2i+1}$ cannot belong to $I_K[d] \cdot J_K[e]$, a contradiction to (\ref{atom2}).
	
	Let $\dim_K I[d] = r\geq 2$ and $f_1, f_2, \ldots, f_r \in I_K[d]$ be linearly independent such that $\inn(f_1) > \inn(f_2) > \ldots > \inn(f_r)$ (we use Lemma~\ref{5.9}). Similarly, let $\dim_K J_K[e] = s \geq 2$ and let $g_1, \ldots, g_s\in J_K[e]$ be linearly independent such that $\inn(g_1)  > \ldots > \inn(g_s)$. If $\inn(f_1) \ne X_1^d$, then $\inn (f_1g) \ne X_1^{2i+1}$ for any $g \in J_K[e]$, which is not possible by \ref{atom2}. Thus, we have  $\inn(f_1) = X_1^d$, and hence $\inn(g_1) = X_1^e$. Without loss of generality assume that $d$ is odd, while $e$ is even.  If  $\inn(f_2) = X_1^{d-u}X_2^u$ for an even $u \in \N$, then $\inn(f_2g_1) = X_1^{d+e-u}X_2^u$, a contradiction to (\ref{atom2}). If $\inn(f_2) = X_1^{d-u}X_2^u$ for an odd $u \in \N$, then we consider $\inn(g_2)$. If  $\inn(g_2) = X_1^{e-v}X_2^v$ for an odd $v \in \N$, we get a contradiction as $\inn(f_2g_2) = X_1^{d+e-u-v}X_2^{u+v}$. If  $\inn(g_2) = X_1^{e-v}X_2^v$ for an even $v \in \N$, we get a contradiction as $\inn(f_1g_2) = X_1^{d+e-v}X_2^{v}$. Note that a case of $\inn(f_{i_0}) = X_1^{j_1} \cdot \ldots \cdot X_n^{j_n}$ (similarly for $g_{i_0}$) with $\sum_{\nu =1}^{n}j_{\nu}=d$ and $j_{\nu}>0$ for some $\nu \in [3,n]$ is not possible by the same argument.

\smallskip	
	3. Let $i\in \N_{\ge 3}$ and assume to the contrary that $\mathfrak{c}_{2i} = IJ$ with $\mathfrak{c}_{2i} \subsetneq I, J \subsetneq R$ such that $\mdeg(I) = d$ and $\mdeg(J) = e$. Then, by Lemma~\ref{5.8}, we have $d\geq 1$ and $e \geq 1$, by Lemma~\ref{5.5}, we have  $2i= d+e$, and Lemma~\ref{5.6}.2 implies that
	\begin{equation}\label{atom3}
	\Span_K\{\{X_1^{2i}, X_1^{2i-1}X_2\}\cup \{X_1^{2i-j}X_2^{j}~:~ j\in[1,2i] \text{ and } j \equiv 0 \mod 2 \}\} = I_K[d]\cdot J_K[e].
	\end{equation}
	As in 2., $I_K[d]$ and $J_K[e]$ are finite dimensional vector spaces of dimension at least two. Moreover, since $i\geq 3$,  $\dim_K I_K[d]$ or $\dim_K J_K[e]$ must be at least three, say $\dim_K I[d] \geq 3$. Let $\dim_K I[d] = r\geq 3$ and let $f_1,  \ldots, f_r \in I_K[d]$ be linearly independent such that $\inn(f_1) > \inn(f_2) > \ldots > \inn(f_r)$ (again we use Lemma~\ref{5.9}). Similarly, let $\dim_K J_K[e] = s \geq 2$ and let $g_1, \ldots, g_s\in J_K[e]$ be linearly independent such that $\inn(g_1) >  \ldots > \inn(g_s)$. As in 2., we have $\inn(f_1) = X_1^d$, and hence $\inn(g_1) = X_1^e$.
	
	Consider now $f_2$ and $g_2$ such that $\inn(f_2) = X_1^{d-a}X_2^a$ and $\inn(g_2) = X_1^{e-b}X_2^b$, where $a, b \in \N$. We distinguish two cases.

\noindent
CASE 1: $a = 1$.

 If $b$ is even, then $\inn(f_2g_2) = X_1^{d+e-b-1}X_2^{b+1}$ which is not possible, see (\ref{atom3}). If $b \geq 3$ is odd, then $\inn(f_1g_2) = X_1^{d+e-b}X_2^{b}$ which again is not possible. Thus,  $b=1$ and $\dim_K J_K[e] = 2$. Consider now $f_3$ such that $\inn(f_3)=X_1^{d-c}X_2^c$ with $c\in \N_{\geq 2}$. If $c$ is even, then  $\inn(f_3g_2) = X_1^{d+e-c-1}X_2^{c+1}$, a contradiction to (\ref{atom3}), and if $c$ is odd, then $\inn(f_3g_1) = X_1^{d+e-c}X_2^{c}$, again a contradiction to (\ref{atom3}). Hence $\dim_K I_K[d] = 2$, which is not possible.

\noindent
CASE 2: $a \geq 2$.

 If $a$ is odd, then $\inn(f_2g_1) = X_1^{d+e-a}X_2^{a}$, which is a contradiction to (\ref{atom3}). Therefore,  $a$ is  even and, hence $b$ is also even. In general, there are only the following possibilities for $\inn(f_u)$ and $\inn(g_v)$:
	\[
	\inn(f_u) = X_1^{d-a_u}X_2^{a_u} \text{ with all } a_u \text{  even and } 0= a_1 < \ldots < a_r \leq d
	\]
	and
	\[
	\inn(g_v) = X_1^{e-b_v}X_2^{b_v} \text{ with all } b_v \text{  even and } 0= b_1 < \ldots < b_s \leq e.
	\]
	Now as $X_1^{2i-1}X_2 \in I_K[d]\cdot J_K[e]$, so $X_1^{2i-1}X_2 = \sum_{u=1}^{r}\sum_{v=1}^{s}\alpha_{uv}f_ug_v$ with $\alpha_{uv}\in K$ for every $u,v$. If $\alpha_{11}\not=0$, then initial monomial of the right hand side would be $X_1^{2i}$, a contradiction. If $\alpha_{11}=0$, then
	\[
	X_1^{2i-1}X_2 = \inn \left(\sum_{u=1}^{r}\sum_{v=1}^{s}\alpha_{uv}f_ug_v \right) \leq \max \big\{\inn(f_ug_v) \colon u \in [1,r], v \in [1,s] \big\}
	\]
	which is not possible, since the right hand side of the above inequality is $X_1^{d+e-a_{u_0}-b_{v_0}}X_2^{a_{u_0}+b_{v_0}}$ which is always less than $X_1^{2i-1}X_2$.

\smallskip	
	4. Assume to the contrary that $\mathfrak{c}' = IJ$ with $\mathfrak{c}' \subsetneq I, J \subsetneq R$ such that $\mdeg(I) = d$ and $\mdeg(J) = e$. Then, by Lemma~\ref{5.8}, we have $d\geq 1$ and $e \geq 1$, by Lemma~\ref{5.5}, we have $d+e=3$,  and  Lemma~\ref{5.6}.2 implies that
	\begin{equation}\label{atom4}
	\Span_K\{X_1^3+X_2^3, X_1^2X_2, X_1X_2^2\} = I_K[d]\cdot J_K[e].
	\end{equation}
	We may assume that $d=1$ and $e=2$. We claim that $\dim_KI[d] \ge 2$ and $\dim_KJ[e] \ge 2$. Indeed, if $I_K[d] = \Span_K\{f\}$ and $J_K[e] = \Span_K\{g_1,\ldots, g_s\}$, then $I_K[d] \cdot J_K[e] = \Span_K\{fg_1,\ldots, fg_s\}$. From (\ref{atom4}), we get
	\[
	X_1^{2}X_2 = \sum_{j=1}^{s} \alpha_j fg_j, \text{ where } \alpha_j\in K \text{ for all } j\in [1,s] \,,
	\]
	and  we deduce either $f =  \alpha X_1$ or $f =  \alpha' X_2$ for some $\alpha, \alpha' \in K^\times$. Hence $X_1^3+X_2^{3}$ cannot belong to $I_K[d] \cdot J_K[e]$, a contradiction to (\ref{atom4}). Note that similar argument works if we consider $d=2$.
	
	Let $\dim_K I[d] = r\geq 2$ and let $f_1,  \ldots, f_r \in I_K[d]$ be linearly independent such that $\inn(f_1)  > \ldots > \inn(f_r)$ (we use Lemma~\ref{5.9}). Similarly, let $\dim_K J_K[e] = s \geq 2$ and let $g_1, \ldots, g_s\in J_K[e]$ be linearly independent such that $\inn(g_1) >  \ldots > \inn(g_s)$. On the other hand, for $j\in [3,n]$ $\inn(f_{i_0})=X_j$ is not possible, whence $\dim_K I[d] = 2$ and $\inn(f_{1})=X_1, \inn(f_{2})=X_2$. Similarly, we obtain that $\dim_K J[e] = 2$ and $\inn(g_{1})=X_1^2, \inn(g_{2})=X_1X_2$. Assume $g_1 = X_1^2+aX_2^2+\cdots$\,. Then by (\ref{atom4})
	\[
	f_2g_1 = \alpha_1(X_1^3+X_2^3) + \alpha_2X_1^2X_2 + \alpha_3X_1X_2^2, \qquad \text{ where } \alpha_1, \alpha_2, \alpha_3\in K
	\]
	which implies that $a = 0$. Similarly, the coefficient of the term $X_2^2$ in $g_2$ is zero as well. This shows that $X_1^3+X_2^3$ does not belong to $I_K[d] \cdot J_K[e]$, a contradiction.
\end{proof}

\smallskip
\begin{remark}\label{5.11}
	If $2$ is a unit of $D$, then
	\[
	\mathfrak{c}_4(X_1,X_2) = \langle X_1^4, X_1^3X_2, X_1^2X_2^2, X_2^4 \rangle = \langle X_1^2, X_1X_2+X_2^2 \rangle\langle X_1^2, X_1X_2-X_2^2 \rangle \,,
	\]
whence $\mathfrak{c}_4(X_1,X_2)$ is not an atom of $\mathcal I(R)$.
\end{remark}

\medskip
\begin{proof}[Proof of Theorem \ref{5.1}]
1. and 2.
We need to show the claim concerning the finite factorization property. All other statements follow by Lemma~\ref{5.3} and Proposition~\ref{5.10} (clearly, $\mathfrak a_1 (X_1, X_2)$ is an atom). Suppose that  $D^{\times}$ is infinite. For every $\alpha \in D^{\times}$, we have the identity
\[
\langle X_1, X_2\rangle \cdot \langle X_1^2+\alpha X_2^2, X_1X_2\rangle = \langle X_1, X_2\rangle^3 \,.
\]
If $\alpha, \alpha' \in D^{\times}$ are distinct, then $\langle X_1^2+\alpha X_2^2, X_1X_2\rangle \ne \langle X_1^2+\alpha' X_2^2, X_1X_2\rangle$. Thus, the element $\langle X_1, X_2\rangle^3$ has infinitely many divisors, whence $\mathcal I (R)$ is not an \FF-monoid by \cite[Proposition 1.5.5]{Ge-HK06a}.

3.  In order to show that $\mathsf L_{\mathcal{I}(R)} \big(\mathfrak{a}_k(X_1,X_2) \big) =  \mathsf L \big(\mathfrak{a}_k(X_1,X_2) \big)\subset [2,k]$ for all $k \ge 2$, we choose $\ell \in \N_{\ge 2}$ and consider a factorization $ \mathfrak{a}_k(X_1,X_2) = I_1 \cdot \ldots \cdot I_{\ell}$, where $I_1, \ldots, I_{\ell}$ are atoms in $\mathcal I (R)$. By Lemma \ref{5.8}, we have $\mdeg (I_j) \ge 1$ for all $j \in [1, \ell]$. Using Lemma \ref{5.5} we obtain that
\[
k = \mdeg ( \mathfrak a_k) = \sum_{j=1}^{\ell} \mdeg (I_j) \ge \ell \,,
\]
whence $\ell \le k$ and $\mathsf L \big(\mathfrak{a}_k(X_1,X_2) \big) \subset [2,k]$.
To verify the reverse inclusion  we proceed by induction on $k$. This is  clear for $k=2$. By Lemma~\ref{5.2}.2 (with $\ell=2$ and $k=1$), we obtain that
\[
\mathfrak{a}_3(X_1,X_2) = \mathfrak{a}_1(X_1,X_2)\cdot \mathfrak{b}_2(X_1,X_2) \,.
\]
Since the involved ideals are atoms by Proposition~\ref{5.10}.1, the assertion holds for $k=3$.
Suppose that claim holds for $k\geq 3$. Since $\mathfrak{a}_{k+1}(X_1,X_2) = \mathfrak{a}_1(X_1,X_2)\cdot \mathfrak{a}_k(X_1,X_2)$ and $\mathfrak{a}_1(X_1,X_2)$ is an atom, it follows that
\[
[3, k+1] \subset 1 + \mathsf L \big( \mathfrak{a}_k(X_1,X_2) \big) \subset \mathsf L \big( \mathfrak{a}_{k+1} (X_1, X_2) \big) \,.
\]
It remains to verify that $2 \in \mathsf L \big(\mathfrak{a}_{k+1}(X_1,X_2) \big)$. If $k\in \N_{\ge 3} \setminus \{4\}$, then the following identity (see  Lemma~\ref{5.2})
\[
\mathfrak{a}_{k+1}(X_1,X_2) =  \mathfrak{a}_1(X_1,X_2)\cdot \mathfrak{c}_k(X_1,X_2)
\]
together with Proposition~\ref{5.10} show that $2 \in \mathsf L \big(\mathfrak{a}_{k+1}(X_1,X_2) \big)$. If $k = 4$, then $2 \in \mathsf L \big(\mathfrak{a}_{k+1}(X_1,X_2) \big)$ because  $\mathfrak{a}_5(X_1,X_2)=\mathfrak {b}_2(X_1,X_2)\cdot\mathfrak{c}'(X_1,X_2)$ (use Proposition~\ref{5.10}.4).

4. We set $\mathfrak{p}_0 = \langle X_1 \rangle$, $H_1 = \mathcal{F}(\{\mathfrak{p}_0\})$ and $H_2 = \{\mathfrak{a} \in \mathcal{I}(R)~:~ \mathfrak{p}_0 \nmid \mathfrak{a}\}$. Since $\mathfrak{p}_0$ is an invertible prime ideal, it is a cancellative prime element of $\mathcal I (R)$, whence we get that
\[
\mathcal I(R) = H_1\times H_2 \,.
\]
Note that  $\mathfrak{a}_k(X_1,X_2)\in H_2$ for all $k\in \N$. Thus, 3. shows that Conditions (a) and (b') of Proposition~\ref{3.2} hold, whence $\mathcal I (R)$ is fully elastic.
\end{proof}

\smallskip
Theorem \ref{5.1} shows, among others, that unions of sets of lengths of the monoid of all nonzero ideals are equal to $\N_{\ge 2}$, as it is true for transfer Krull monoids (which include monoids of invertible ideals of Krull domains) with  infinite class group and prime divisors in all classes (Proposition \ref{4.9}). We post the conjecture that also their sets of lengths coincide, namely that every finite subset $L \subset \N_{\ge 2}$ occurs as a set of lengths.

\smallskip
\begin{conjecture} \label{5.12}
Let  $R = D[X_1, \ldots, X_n]$ be the polynomial ring  in $n \ge 2$ indeterminates over a domain $D$, and suppose that $\mathcal I (R)$ is a \BF-monoid.  Then,  for every finite subset $L \subset \N_{\ge 2}$, there is $\mathfrak a \in \mathcal I(R)$ such that $\mathsf L_{\mathcal I(R)} (\mathfrak a) = L$.
\end{conjecture}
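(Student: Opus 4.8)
The plan is to follow the blueprint of the proof of Theorem~\ref{5.1}, combining the min-degree homomorphism $\mdeg \colon \mathcal I(R) \to (\N_0,+)$ of Lemma~\ref{5.5} with the leading-form (Gr\"obner) calculus of Lemmas~\ref{5.6} and~\ref{5.9}, and to substantially enlarge the supply of atoms beyond those furnished by Proposition~\ref{5.10}. As a preliminary normalization, since $R$ is a domain the ideal $\langle X_1\rangle$ is a prime element of $\mathcal I(R)$, and being invertible it is cancellative; hence $\mathsf L_{\mathcal I(R)}(\langle X_1\rangle^j \cdot \mathfrak b) = j + \mathsf L_{\mathcal I(R)}(\mathfrak b)$ for all $\mathfrak b \in \mathcal I(R)$ and $j \in \N_0$, exactly as in~\eqref{calc3}. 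Thus it suffices to realize every finite $L \subset \N_{\ge 2}$ with $2 \in L$; the case $|L| = 1$ is immediate from $\langle X_1\rangle^m$, whose factorizations all lie in the divisor-closed free submonoid $\mathcal F(\{\langle X_1\rangle\})$. Two standing constraints guide the construction: first, if an ideal $\mathfrak q$ is primary to a maximal ideal then so is every factor in any factorization of $\mathfrak q$ (Lemma~\ref{5.4}), so such a factor has positive min-degree when the maximal ideal is $\langle X_1,X_2\rangle$ (Lemma~\ref{5.8}); second, $\mdeg$ is additive. Together these force $\mathsf L(\mathfrak q) \subseteq [\,2,\mdeg \mathfrak q\,]$ for every $\langle X_1,X_2\rangle$-primary $\mathfrak q$, which already pins $\max \mathsf L$ from above.

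The construction proper splits into an existence half and a rigidity half. For the existence half one would take
\[
\mathfrak a = \mathfrak q_1 \cdot \ldots \cdot \mathfrak q_t \,,
\]
where each $\mathfrak q_i$ is an explicit monomial or near-monomial ideal primary to a maximal ideal $\mathfrak m_i = \langle X_1 - c_i,\, X_2,\, \ldots,\, X_n\rangle$ with $c_1, \dots, c_t \in D$ pairwise distinct (there are infinitely many such choices when $D$ is infinite, and one passes to a larger coefficient domain otherwise). Since the $\mathfrak m_i$ are pairwise comaximal, the Chinese Remainder Theorem together with Lemma~\ref{5.4} shows that each atom dividing $\mathfrak a$ is primary to exactly one $\mathfrak m_i$, so that the block of atoms attached to $\mathfrak m_i$ multiplies to $\mathfrak q_i$ and
\[
\mathsf L(\mathfrak a) = \mathsf L(\mathfrak q_1) + \ldots + \mathsf L(\mathfrak q_t) \,.
\]
The $\mathfrak q_i$ would be selected among suitable translates of the ideals $\mathfrak a_k, \mathfrak b_\ell, \mathfrak c_j, \mathfrak c'$ of Section~\ref{5}, supplemented by new atoms of the same flavour (designed, as $\mathfrak c'$ and the $\mathfrak c_j$ were, so that multiplication by $\mathfrak a_1$ or $\mathfrak b_2$ collapses them to a power of $\langle X_1,X_2\rangle$); the inclusion $L \subseteq \mathsf L(\mathfrak a)$ would then be witnessed by explicit product identities of the type collected in Lemma~\ref{5.2}, added up over the $t$ blocks and combined with the interval $\mathsf L(\mathfrak a_k)=[2,k]$ from Theorem~\ref{5.1}.3 and Lemma~\ref{3.1}.

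The hard part is the rigidity half, the reverse inclusion $\mathsf L(\mathfrak a) \subseteq L$: no factorization may have a length outside the prescribed set. The min-degree homomorphism controls the largest length, but the intermediate lengths must be excluded directly. The mechanism would again be the leading-form calculus: for a candidate factorization $\mathfrak a = I_1 \cdot \ldots \cdot I_\ell$ with $d_s = \mdeg I_s$, iterating Lemma~\ref{5.6} gives
\[
(\mathfrak a)_K[\mdeg \mathfrak a] = I_{1,K}[d_1] \cdot \ldots \cdot I_{\ell,K}[d_\ell] \,,
\]
and the explicitly prescribed leading form of $\mathfrak a$ must pin down each subspace $I_{s,K}[d_s]$ up to the moves already accounted for, exactly as the case analyses on initial monomials do in Proposition~\ref{5.10}. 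The obstacle is that $\mathcal I(R)$ is neither an \FF-monoid nor locally finitely generated (Theorem~\ref{5.1}.1), so there is no finite list of atoms to test against; the exclusion argument must be uniform in the unbounded data $(\ell, d_1, \dots, d_\ell)$ of an arbitrary factorization and in the dimensions of the leading-form spaces. Exhibiting building blocks $\mathfrak q_i$ that are simultaneously flexible enough for the existence half and rigid enough for this analysis — the ideal-theoretic counterpart of the zero-sum constructions behind \cite[Theorem~7.4.1]{Ge-HK06a} for $\mathcal B(G)$ with $G$ infinite — is, I expect, the decisive and still-missing step.
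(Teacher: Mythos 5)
The statement you are proving is Conjecture~\ref{5.12}: the paper offers no proof of it, and explicitly leaves it open. Your proposal is a strategy outline rather than a proof, and you say so yourself in the last sentence (``the decisive and still-missing step''). Since the claim is to \emph{verify} the statement, an acknowledged missing step means the attempt is incomplete; but it is worth being precise about where the gaps are, because one of them is structural rather than merely technical.

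The structural gap is in the existence half, before one even reaches the rigidity analysis. Your plan is to realize $L$ as a sumset $\mathsf L(\mathfrak q_1) + \ldots + \mathsf L(\mathfrak q_t)$ of blocks supported at pairwise comaximal maximal ideals. But the only sets of lengths the paper actually computes are the intervals $\mathsf L(\mathfrak a_k(X_1,X_2)) = [2,k]$ from Theorem~\ref{5.1}.3, and a sumset of intervals (each containing its minimum) is again an interval. So this machinery, as it stands, can only produce $L = [2,k]$ and its translates; to reach a non-interval target such as $L = \{2,5\}$ you must first exhibit a \emph{single} ideal whose full set of lengths is a prescribed non-interval set, and that is exactly the content of the conjecture in its simplest nontrivial case. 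The ``new atoms of the same flavour'' you invoke are not constructed, and nothing in Lemmas~\ref{5.2}--\ref{5.9} or Proposition~\ref{5.10} supplies them. Two further points need proof rather than assertion: (i) the identity $\mathsf L(\mathfrak q_1 \cdots \mathfrak q_t) = \mathsf L(\mathfrak q_1) + \ldots + \mathsf L(\mathfrak q_t)$ for comaximal primary blocks is plausible via localization, but $\mathcal I(R)$ is only unit-cancellative, Lemma~\ref{5.4} applies only to factorizations of a \emph{primary} ideal, and one must show that every atom dividing the product is itself primary to exactly one $\mathfrak m_i$; (ii) the rigidity half --- excluding all intermediate lengths, uniformly over factorizations of unbounded length and over leading-form spaces of unbounded dimension --- is entirely absent, and as you note it cannot be reduced to a finite check because $\mathcal I(R)$ is not locally finitely generated. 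Your reduction to the case $\min L = 2$ via the cancellative prime $\langle X_1\rangle$ and your use of $\mdeg$ to bound $\max \mathsf L(\mathfrak q)$ from above are both sound and consistent with the paper's techniques, but they do not close either gap.
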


To conclude this paper, we would like to compare the arithmetic of $\mathcal I (R)$, in particular Theorem \ref{5.1} and Conjecture \ref{5.12}, with the arithmetic of the power monoid of $\N_0$. Following the terminology and notation of Fan and Tringali \cite{Fa-Tr18a}, we denote by
\begin{itemize}
\item $\mathcal P_{\fin} (\N_0)$ the {\it power monoid} of $\N_0$, that is the semigroup of finite nonempty subsets of $\N_0$ with set addition as operation (i.e., for finite nonempty subsets $A, B \subset \N_0$, their sumset $A+B$ is defined as $A+B = \{ a + b \colon a \in A, b \in B\}$), and by

\item  $\mathcal P_{\fin, 0} (\N_0)$ the {\it reduced power monoid} of $\N_0$, that is the subsemigroup of $\mathcal P_{\fin} (\N_0)$ consisting of all finite nonempty subsets of $\N_0$ that contain $0$.
\end{itemize}
Both, $\mathcal P_{\fin} (\N_0)$ and $\mathcal P_{\fin, 0} (\N_0)$, are commutative reduced unit-cancellative semigroups (whence  monoids in the present sense) and $\{0\}$ is their zero-element. Power monoids are objects of primary interest in additive combinatorics and their arithmetic is studied in detail by Antoniou, Fan,  and Tringali in \cite{Fa-Tr18a, An-Tr21a}. Among others, they show that $\mathcal P_{\fin} (\N_0)$ is not transfer Krull,  that unions of sets of lengths of $\mathcal P_{\fin} (\N_0)$ are equal to $\N_{\ge 2}$, and that the set of distances equals $\N$. The standing conjecture is that every finite subset $L \subset \N_{\ge 2}$ occurs as a set of lengths of $\mathcal P_{\fin} (\N_0)$ (\cite[Section 5]{Fa-Tr18a}). Thus, the arithmetic of  $\mathcal I (R)$ and the arithmetic of  $\mathcal P_{\fin} (\N_0)$ seem to have pretty much in common. Our final result shows that the method, developed to show that $\mathcal I (R)$ is fully elastic, also allows to show that $\mathcal P_{\fin} (\N_0)$ is fully elastic, a question that remained open in \cite{Fa-Tr18a}. Moreover, $\mathcal I (R)$ has a submonoid that is isomorphic to $\mathcal P_{\fin} (\N_0)$.

\smallskip
\begin{proposition} \label{5.13}~

\begin{enumerate}
\item The element $\{1\}$ is a cancellative prime element of $\mathcal P_{\fin} (\N_0)$, whence $\mathcal P_{\fin} (\N_0) = F \times \mathcal P_{\fin, 0} (\N_0)$, where $F$ is the free abelian monoid generated by the prime element $\{1\}$. Moreover, $\mathcal P_{\fin} (\N_0)$ is fully elastic.

\item $\mathcal P_{\fin}(\N_0)$ is isomorphic to a submonoid of $\mathcal I (R)$, where $\mathcal I (R)$ is as in Conjecture \ref{5.12}.
\end{enumerate}
\end{proposition}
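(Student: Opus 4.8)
It is convenient to establish part~2 first and then to invoke it in part~1. For part~2, the plan is to define $\iota \colon \mathcal P_{\fin}(\N_0) \to \mathcal I(R)$ by sending a finite nonempty set $A \subset \N_0$, with $c := \max A$, to the monomial ideal $\iota(A) := \langle X_1^{c-a}X_2^{a} \colon a \in A\rangle$ (so that $\iota(\{0\}) = R$). Since $\max(A+B) = \max A + \max B$ and $A+B = \{a+b \colon a \in A,\ b \in B\}$, a direct computation of the product ideal shows $\iota(A)\iota(B) = \iota(A+B)$, so $\iota$ is a monoid homomorphism; it is injective because the pairwise distinct monomials $X_1^{c-a}X_2^{a}$ ($a \in A$) all have degree $c$, whence none divides another, whence they are exactly the minimal monomial generators of $\iota(A)$, from which $A$ is recovered. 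This proves part~2 (for any $R$ as in Conjecture~\ref{5.12}, since only $X_1,X_2$ are used). For the use in part~1 one also records the dictionary $\iota(\{0,1\}) = \langle X_1,X_2\rangle = \mathfrak a_1(X_1,X_2)$, $\iota\bigl([0,k]\bigr) = \mathfrak a_k(X_1,X_2)$, $\iota\bigl(\{0,1\}\cup\{3,5,\dots,2i+1\}\bigr) = \mathfrak c_{2i+1}(X_1,X_2)$, $\iota\bigl(\{0,1\}\cup\{2,4,\dots,2i\}\bigr) = \mathfrak c_{2i}(X_1,X_2)$, together with the following transfer principle: if $C \in \mathcal P_{\fin,0}(\N_0)$ and $\iota(C)$ is an atom of $\mathcal I(R)$, then $C$ is an atom of $\mathcal P_{\fin,0}(\N_0)$ (a nontrivial factorization $C = A+B$ necessarily has $A,B \in \mathcal P_{\fin,0}(\N_0)$, and would map under $\iota$ to a nontrivial factorization of $\iota(C)$, because $\iota^{-1}(R) = \{\{0\}\}$).

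For part~1, I would first check that $\{1\}$ is a cancellative prime element of $\mathcal P_{\fin}(\N_0)$: cancellativity is immediate from translation-invariance of set addition, while $\{1\}\mid A$ holds precisely when $0\notin A$, so $A+B \in \{1\}+\mathcal P_{\fin}(\N_0)$ forces $\min A + \min B = \min(A+B)\ge 1$ and hence $\{1\}\mid A$ or $\{1\}\mid B$. As in the proof of Proposition~\ref{3.2}, this gives $\mathcal P_{\fin}(\N_0) = \mathcal F(\{\{1\}\}) \times T$ with $T = \{A \colon \{1\}\nmid A\} = \{A\colon 0 \in A\} = \mathcal P_{\fin,0}(\N_0)$. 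To conclude that $\mathcal P_{\fin}(\N_0)$ is fully elastic, I would apply Proposition~\ref{3.2} with $H_1 = \mathcal F(\{\{1\}\})$ (half-factorial, not a group) and $H_2 = \mathcal P_{\fin,0}(\N_0)$: condition~(a) holds, and the monoid homomorphism $\max\colon \mathcal P_{\fin,0}(\N_0)\to(\N_0,+)$, under which every atom has value $\ge 1$, shows both that $\mathcal P_{\fin}(\N_0)$ is a \BF-monoid (as required) and that $\max\mathsf L\bigl([0,m]\bigr)\le m$, with equality since $[0,m] = \{0,1\}+\cdots+\{0,1\}$ ($m$ summands). As $[0,m]$ is, for $m\ge 2$, neither a unit nor an atom, condition~(b') reduces to exhibiting, for each $m\ge 2$, a factorization of $[0,m]$ of length $2$ into atoms of $\mathcal P_{\fin,0}(\N_0)$.

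For this last point the plan is: take $[0,2] = \{0,1\}+\{0,1\}$ and $[0,3] = \{0,1\}+\{0,2\}$; treat $m = 5$ via the short direct verification that $\{0,1,2,4\}$ is an atom of $\mathcal P_{\fin,0}(\N_0)$, so that $[0,5] = \{0,1\}+\{0,1,2,4\}$; and for every remaining $m\ge 4$ apply $\iota^{-1}$ to the identity $\mathfrak a_1(X_1,X_2)\cdot\mathfrak c_{m-1}(X_1,X_2) = \mathfrak a_m(X_1,X_2)$ of Lemma~\ref{5.2} (items 3 and 4), obtaining $[0,m] = \{0,1\} + \iota^{-1}\bigl(\mathfrak c_{m-1}(X_1,X_2)\bigr)$ in which $\{0,1\}$ is an atom and $\iota^{-1}(\mathfrak c_{m-1}(X_1,X_2))$ is an atom of $\mathcal P_{\fin,0}(\N_0)$ by the transfer principle; here one takes $R$ to be a polynomial ring in two variables over a field, so that $\mathcal I(R)$ is a \BF-monoid by Proposition~\ref{2.2} and $\mathfrak c_{m-1}(X_1,X_2)$ is an atom of $\mathcal I(R)$ by Proposition~\ref{5.10} (note that $m-1$ is either odd, covered by Proposition~\ref{5.10}.2, or even and at least $6$, covered by Proposition~\ref{5.10}.3). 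I expect the main obstacle to be exactly this case distinction, and specifically the exceptional value $m = 5$: since $\mathfrak c_4(X_1,X_2)$ need not be an atom (Remark~\ref{5.11}), the factorization $\mathfrak a_1\cdot\mathfrak c_4 = \mathfrak a_5$ does not transfer, and one must supply a replacement atom of $\mathcal P_{\fin,0}(\N_0)$ by a direct combinatorial argument; checking that the exhibited two-term products really are factorizations into atoms of $\mathcal P_{\fin,0}(\N_0)$ — and not merely of some larger monoid — is the real content of the argument.
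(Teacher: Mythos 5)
Your proof is correct. Part 2 is essentially identical to the paper's argument: your $\iota$ is exactly the paper's map $A \mapsto I_A = \langle X_1^{\max A - a}X_2^{a} \colon a \in A\rangle$ onto the submonoid of monomial ideals $\langle \mathcal N \cup \{X_2^m\}\rangle$ with $\mathcal N \subset \mathcal M_{m;1,2}$. For part 1 the overall frame is the same (the cancellative prime $\{1\}$, the splitting $\mathcal P_{\fin}(\N_0) = F \times \mathcal P_{\fin,0}(\N_0)$, and an appeal to Proposition \ref{3.2} via Conditions (a) and (b')), but you obtain Condition (b') by a genuinely different route: the paper simply cites \cite[Proposition 4.8]{Fa-Tr18a} for the full identity $\mathsf L([0,n]) = [2,n]$, whereas you prove only the two facts actually needed ($2 \in \mathsf L([0,m])$ and $\max \mathsf L([0,m]) = m$) internally, by pulling the atoms $\mathfrak c_{m-1}(X_1,X_2)$ of Proposition \ref{5.10} back through the embedding of part 2. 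Your transfer principle goes in the correct direction (an injective homomorphism into the reduced monoid $\mathcal I(R)$ with $\iota^{-1}(R)=\{\{0\}\}$ pulls atoms back, not forward), your dictionary $\iota(\{0,1\}\cup\{3,5,\ldots,2i+1\}) = \mathfrak c_{2i+1}(X_1,X_2)$ and $\iota(\{0,1\}\cup\{2,4,\ldots,2i\}) = \mathfrak c_{2i}(X_1,X_2)$ checks out against the definitions, and you correctly isolate $m=5$ as the one value where the transfer fails (Remark \ref{5.11}) and must be replaced by the direct verification that $\{0,1,2,4\}$ is an atom, which is easily confirmed. What your approach buys is self-containedness (everything reduces to Lemma \ref{5.2} and Proposition \ref{5.10}) and a nice illustration that the embedding of part 2 carries arithmetic information; what the citation buys is brevity and the stronger conclusion that the entire interval $[2,n]$ occurs as a set of lengths, which is what the paper uses to motivate Conjecture \ref{5.12}.
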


\begin{proof}
1. It is straightforward to verify that $\{1\}$ is a cancellative prime element of $\mathcal P_{\fin} (\N_0)$. Since $\mathcal P_{\fin, 0} (\N_0) = \{A \in P_{\fin} (\N_0) \colon \{1\} \ \text{does not divide} \ A \}$, it follows that $\mathcal P_{\fin} (\N_0) = F \times \mathcal P_{\fin, 0} (\N_0)$. Proposition 4.8 in \cite{Fa-Tr18a} shows that, for every $n \ge 2$,
\[
[2,n] = \mathsf L_{\mathcal P_{\fin, 0}(\N_0)} ( [0,n]) = \mathsf L_{\mathcal P_{\fin}(\N_0)} ( [0,n]) \,.
\]
Thus, Condition (a) and Condition (b') of Proposition \ref{3.2} are satisfied, whence $\mathcal P_{\fin} (\N_0)$ is fully elastic.

2. Let  $R = D[X_1, \ldots, X_n]$ be the polynomial ring  in $n \ge 2$ indeterminates over a domain $D$, and suppose that $\mathcal I (R)$ is a \BF-monoid. We consider the monoid
$\mathcal M(X_1,X_2)$  of all monomial ideals in the indeterminates $X_1,X_2$. This is a submonoid of $\mathcal I(R)$.
Recall that, for every $m \in \N_0$, we denote by $\mathcal M_{m;1,2}$  the set of all monomials of the form $X_1^rX_2^s$ with $r+s=m$ and $r,s \in \N_0$.
Let $\mathcal M_{2}(X_1,X_2) \subset \mathcal M (X_1,X_2)$  consist of all ideals
\[
\langle \mathcal N  \cup \{X_2^m\} \rangle \,,
\]
where $m \in \N_0$ and $\mathcal N \subset \mathcal M_{m;1,2}$ is any subset (note that, for example,
$\langle X_2 \rangle$ and $\langle X_1^3, X_1^2X_2, X_2^3 \rangle$ are in $\mathcal M_{2}(X_1,X_2)$, whereas $\langle X_1^3, X_1^2X_2 \rangle$ and $\langle X_1^2 \rangle$ do not belong to $\mathcal M_{2}(X_1,X_2)$).
Then  $\mathcal M_{2}(X_1,X_2) \subset \mathcal I(R)$ is a submonoid.

For $A \in \mathcal P_{\fin}(\N_0)$, say $A = \{m_1,\ldots, m_{\ell} \}$ with $\ell \geq 1$ and $0 \le m_1 < m_2 <\ldots < m_{\ell}$. We denote by
\[
I_A := \langle X_1^{m_{\ell} - m_1} X_2^{m_1}, X_1^{m_{\ell}-m_2}X_2^{m_2}, \ldots, X_1^{m_{\ell}-m_{\ell-1}} X_2^{m_{\ell-1}} ,X_2^{m_{\ell}} \rangle
\]
an ideal which clearly belongs to $\mathcal M_{2}(X_1,X_2)$.
Thus, we obtain a map
\[
\mathcal P_{\fin}(\N_0) \longrightarrow \mathcal M_{2}(X_1,X_2) \,, \quad \text{ given by } \quad A \longmapsto I_A \,,
\]
which is easily seen to be  a monoid isomorphism.
\end{proof}

\bigskip
\noindent
{\bf Acknowledgement.} We would like to thank  Rob Eggermont, Azhar Farooq, Florian Kainrath, Andreas Reinhart,  and Daniel Smertnig, for many helpful discussions. Furthermore, we would like to thank the reviewer for their careful work and all their suggestions.

\providecommand{\bysame}{\leavevmode\hbox to3em{\hrulefill}\thinspace}
\providecommand{\MR}{\relax\ifhmode\unskip\space\fi MR }
\providecommand{\MRhref}[2]{%
  \href{http://www.ams.org/mathscinet-getitem?mr=#1}{#2}
}
\providecommand{\href}[2]{#2}

\end{document}